\documentclass[11pt]{amsart}
\usepackage{amscd}
\usepackage[arrow,matrix]{xy}
\usepackage{graphicx}
\usepackage{amsmath}
\usepackage{amsrefs}
\usepackage{mathtools}
\usepackage{xcolor}
\usepackage{listings}

\definecolor{codegreen}{rgb}{0,0.6,0}
\definecolor{codegray}{rgb}{0.5,0.5,0.5}
\definecolor{codepurple}{rgb}{0.58,0,0.82}
\definecolor{backcolour}{rgb}{0.96,0.96,0.96}

\lstdefinestyle{sagemathstyle}{
	backgroundcolor=\color{backcolour},   
	commentstyle=\color{codegreen},
	keywordstyle=\color{magenta},
	numberstyle=\tiny\color{codegray},
	stringstyle=\color{codepurple},
	basicstyle=\small\ttfamily,
	breakatwhitespace=false,         
	breaklines=true,                 
	captionpos=b,                    
	keepspaces=true,                 
	numbers=left,                    
	numbersep=5pt,                  
	showspaces=false,                
	showstringspaces=false,
	showtabs=false,                  
	tabsize=4,
	frame=single,
	rulecolor=\color{black!30},
	title=\lstname
}
\usepackage{soul}
\usepackage{comment}
\usepackage{latexsym,amssymb}
\usepackage{hyperref}
\hypersetup{hidelinks}
\input xypic
\numberwithin{equation}{section}
\theoremstyle{plain}
\newtheorem{lemma}{Lemma}[section]
\newtheorem{proposition}[lemma]{Proposition}
\newtheorem{theorem}[lemma]{Theorem}
\newtheorem{corollary}[lemma]{Corollary}

\theoremstyle{definition}
\newtheorem{definition}[lemma]{Definition}
\newtheorem{remark}[lemma]{Remark}

\newtheorem{examples}[lemma]{Examples}

\usepackage{tikz}
\usepackage{dsfont}
\definecolor{grey}{RGB}{188,188,188}

\begin{document}
	\newcommand{\R}{{\mathbb R}}
	\newcommand{\C}{{\mathbb C}}
	\newcommand{\F}{{\mathbb F}}
	\renewcommand{\O}{{\mathbb O}}
	\newcommand{\Z}{{\mathbb Z}}
	\newcommand{\N}{{\mathbb N}}
	\newcommand{\Q}{{\mathbb Q}}
	\renewcommand{\H}{{\mathbb H}}
	\newcommand{\X}{{\mathfrak X}}

	\newcommand{\Aa}{{\mathcal A}}
	\newcommand{\Bb}{{\mathcal B}}
	\newcommand{\Cc}{{\mathcal C}}    %configuration space
	\newcommand{\Dd}{{\mathcal D}}
	\newcommand{\Ee}{{\mathcal E}}
	\newcommand{\Ff}{{\mathcal F}}
	\newcommand{\Gg}{{\mathcal G}}    %gauge transformations
	\newcommand{\Hh}{{\mathcal H}}
	\newcommand{\Kk}{{\mathcal K}}
	\newcommand{\Ii}{{\mathcal I}}
	\newcommand{\Jj}{{\mathcal J}}
	\newcommand{\Ll}{{\mathcal L}}    %Loop space
	\newcommand{\Mm}{{\mathcal M}}    %moduli space
	\newcommand{\Nn}{{\mathcal N}}
	\newcommand{\Oo}{{\mathcal O}}
	\newcommand{\Pp}{{\mathcal P}}
	\newcommand{\Qq}{{\mathcal Q}}
	\newcommand{\Rr}{{\mathcal R}}
	\newcommand{\Ss}{{\mathcal S}}
	\newcommand{\Tt}{{\mathcal T}}
	\newcommand{\Uu}{{\mathcal U}}
	\newcommand{\Vv}{{\mathcal V}}
	\newcommand{\Ww}{{\mathcal W}}
	\newcommand{\Xx}{{\mathcal X}}
	\newcommand{\Yy}{{\mathcal Y}}
	\newcommand{\Zz}{{\mathcal Z}}
	
	\renewcommand{\a}{{\mathfrak a}}
	\renewcommand{\b}{{\mathfrak b}}
	\newcommand{\e}{{\mathfrak e}}
	\renewcommand{\k}{{\mathfrak k}}
	\newcommand{\m}{{\mathfrak m}}
	\newcommand{\pg}{{\mathfrak p}}
	\newcommand{\g}{{\mathfrak g}}
	\newcommand{\gl}{{\mathfrak gl}}
	\newcommand{\h}{{\mathfrak h}}
	\renewcommand{\l}{{\mathfrak l}}
	\newcommand{\sm}{{\mathfrak m}}
	\newcommand{\n}{{\mathfrak n}}
	\newcommand{\s}{{\mathfrak s}}
	\renewcommand{\o}{{\mathfrak o}}
	\renewcommand{\u}{{\mathfrak u}}
	\newcommand{\su}{{\mathfrak su}}

	\newcommand{\ssl}{{\mathfrak sl}}
	\newcommand{\ssp}{{\mathfrak sp}}
	\renewcommand{\t}{{\mathfrak t }}
	
	\newcommand{\zt}{{\tilde z}}
	\newcommand{\xt}{{\tilde x}}
	\newcommand{\Ht}{\widetilde{H}}
	\newcommand{\ut}{{\tilde u}}
	\newcommand{\Mt}{{\widetilde M}}
	\newcommand{\Llt}{{\widetilde{\mathcal L}}}
	\newcommand{\yt}{{\tilde y}}
	\newcommand{\vt}{{\tilde v}}
	\newcommand{\Ppt}{{\widetilde{\mathcal P}}}
	\newcommand{\bp }{{\bar \partial}}
	
	\newcommand{\Remark}{{\it Remark}}
	\newcommand{\Proof}{{\it Proof}}
	\newcommand{\ad}{{\rm ad}}
	\newcommand{\Om}{{\Omega}}
	\newcommand{\om}{{\omega}}
	\newcommand{\eps}{{\varepsilon}}
	\newcommand{\Di}{{\rm Diff}}
	
	\newcommand{\Cinf}{C^{\infty}}
	\newcommand{\la}{\langle}
	\newcommand{\ra}{\rangle}
	\newcommand{\half}{\scriptstyle\frac{1}{2}}
	\newcommand{\p}{{\partial}}
	\newcommand{\notsub}{\not\subset}
	\newcommand{\iI}{{I}}               %unit interval [0,1]
	\newcommand{\bI}{{\partial I}}      %boundary of same
	\newcommand{\LRA}{\Longrightarrow}
	\newcommand{\LLA}{\Longleftarrow}
	\newcommand{\lra}{\longrightarrow}
	\newcommand{\LLR}{\Longleftrightarrow}
	\newcommand{\lla}{\longleftarrow}
	\newcommand{\INTO}{\hookrightarrow}
	
	\newcommand{\QED}{\hfill$\Box$\medskip}
	\newcommand{\UuU}{\Upsilon _{\delta}(H_0) \times \Uu _{\delta} (J_0)}
	\newcommand{\bm}{\boldmath}
	\newcommand{\pb}{{\mathbf p}}
	
	\newcommand{\GL}{{\rm GL}}
	\newcommand{\SL}{{\rm SL}}
	\newcommand{\SO}{{\rm SO}}
	\newcommand{\G}{{\rm G_2}}
	\newcommand{\Spin}{{\rm Spin(7)}}
	\newcommand{\Id}{\mathrm{Id}}
	\newcommand{\smallr}{\mathrm{small}}
	\newcommand{\Comm}{{\mathrm Comm}}

	\newenvironment{nouppercase}{%
		\let\uppercase\relax%
		\renewcommand{\uppercasenonmath}[1]{}}{}

	\setcounter{tocdepth}{1}
	
	\title[Minimal unital cyclic $C_\infty$-algebras]{Minimal Unital Cyclic $C_\infty$-Algebras  and the Real  and Rational Homotopy Type of Closed Manifolds}
	
	\author{H\^ong V\^an L\^e}
	\address{Institute of Mathematics of the Czech Academy of Sciences, \v Zitn\'a 25, 115 67 Praha 1, Czech Republic}
	\email{hvle@math.cas.cz}
	\date{July 13, 2026}
	\thanks{The section ``A new proof of Crowley--Nordstr\"om's formality result'' was written in collaboration with Domenico Fiorenza.}
	\begin{abstract}   
Using the notion of isotopy modulo $k$, for $k\in\mathbb N^+$, we
introduce a stratification on the set of minimal $C_\infty$-algebra
enhancements of a finite-dimensional graded commutative algebra $H^*$.
We prove that two such enhancements are $C_\infty$-isotopic if and
only if they are isotopic modulo $k$ for every $k\in\mathbb N^+$.
We define obstruction sets governing the extension of an isotopy
modulo $k$ to an isotopy modulo $(k+1)$ and establish their generalized
additivity.	We prove that if $M$ is a closed $(r-1)$-connected manifold of
dimension $n\leq \ell(r-1)+2,\, r\geq2,\, \ell\geq 4$,
then its real and rational homotopy types are determined by its
cohomology algebra $H^*(M;\mathbb F)$ together with the isotopy class
modulo $(\ell-2)$ of the corresponding minimal unital cyclic
$C_\infty$-algebra enhancement, for $\mathbb F=\mathbb R$ and
$\mathbb F=\mathbb Q$, respectively.
Combining this obstruction theory with the Hodge homotopy method
introduced in \cite{FKLS2021} and further developed in
\cite{FiorenzaLe2025}, we give a new proof of a theorem of
Crowley--Nordstr\"om \cite{CN}: if $M$ is a closed
$(r-1)$-connected manifold of dimension $4r-1$ with
$b_r(M)\leq3$, and there exists a class $\varphi\in H^{2r-1}(M;\mathbb R)$
such that multiplication by $\varphi$ induces an isomorphism
$H^r(M;\mathbb R)\longrightarrow H^{3r-1}(M;\mathbb R),\,x\longmapsto\varphi\smile x$,
then $M$ is intrinsically formal. Finally, we prove a borderline extension of a vanishing theorem of Fiorenza--L\^e: if an $(r-1)$-connected Poincar\'e DGCA over $\Q$ of degree $n\leq5r-2$ admits a Hodge homotopy and satisfies $b^r\leq2$, then the operations of its transferred minimal unital cyclic $C_\infty$-algebra vanish in every arity $k\geq4$.
	\end{abstract}

	\keywords{Rational homotopy type, minimal $C_\infty$-algebra, isotopy modulo $k$, Poincaré Differential Graded Algebra, Hodge homotopy, filtered Differential Graded Lie Algebras (DGLAs), Harrison cohomology, cyclic cohomology, hard Lefschetz property, formality.}
	\subjclass[2020]{Primary: 55P62; Secondary: 57R19, 13D03, 17B70}
	
	\maketitle
	
	\tableofcontents

	\section{Introduction}
	
	Algebraic models for topological spaces were pioneered by Sullivan, who used  differential graded commutative  algebras  (DGCAs) of differential  forms to define the diffeomorphism type of compact manifolds up to finite ambiguity \cites{Sullivan1975,Sullivan1977}. The Sullivan DGCA   model  was refined  by   Halperin-Stasheff \cite{HS1979}.  The Halperin-Stasheff filtered model,  built on the Tate--Jozefiak resolution  of  the cohomology algebra $H^*(\Aa^*)$ of a DGCA  $\Aa^*$ over a field  $\F$  of characteristic 0, provides a method to construct  Sullivan minimal models for simply connected spaces, allowing a systematic approximation of the rational homotopy type \cite{HS1979}.  These models, alongside the Quillen differential graded Lie algebras (DGLA) \cite{Quillen1969} and the Kadeishvili $C_\infty$-algebra model \cite{Kadeishvili2009}, have provided robust tools for understanding homotopy types. 
	
	  In \cite{SS2012} Schlessinger--Stasheff  showed that  the Halperin-Stasheff  model leads to    a representation  of  the set  of homotopy type $(X, i: \Hh \cong  H^*(X))$ as the quotient  of a conical rational algebraic variety $V$ modulo a  pro-unipotent  group action  $G$  \cite{SS2012}. Here $V$ is the space of  perturbations of   the differential in the Tate--Jozefiak  resolution of $ H^*(X)$  and $G$ is the pro-unipotent pro-algebraic group of automorphisms of the resolution that induce the identity on cohomology. (The pro-unipotency of $G$ follows from the fact that for simply connected spaces, any automorphism inducing the identity on cohomology is the exponential of a degree-0 nilpotent derivation.) In this formulation  ``finding a complete set of invariants for  rational homotopy  types seems  to be about   the same  order of difficulty  as finding  a complete  set of invariants  for the $G$-orbits of a variety $V$... except for special cohomology rings'' \cite{SS2012}.   Recent refinements using unital cyclic $C_\infty$-algebras by Kajiura \cite{Kajiura2007, Kajiura2018} and Hamilton--Lazarev \cite{HL2008} have further specialized these tools for closed oriented manifolds.  
	Significant results include Miller’s 1979 formality result for $(k-1)$-connected  closed manifolds of dimension  less than or equal $4k-2$ \cite{Miller1979}, using Quillen theory, and Crowley--Nordstr\"om's 2021 classification of $(r-1)$-connected manifolds of dimension less than or equal $(5r-3)$ via the Bianchi--Massey tensor, using Sullivan theory \cite{CN}.

	This paper  aims to refine  Kadeishvili   rational homotopy theory  (RHT)  classification  using  minimal $C_\infty$-algebras to define  a complete  set of invariants  of the rational homotopy type  of a topological space  whose  cohomology algebra is of finite dimension.    Kadeishvili's results  \cite{Kadeishvili1980}, \cite{Kadeishvili2009} stated  that   for a DGCA  $\Aa^*$ there  exists   a minimal $C_\infty$-algebra   enhancement  $C_\infty\Aa^*$  of $H^* (\Aa^*)$ which is  homotopy equivalent to  $\Aa^*$. Furthermore, assuming that  $H^* (\Aa^*) = H^* (\Bb^*)$,  then $C_\infty\Aa^*$ and $C_\infty\Bb^*$  are $C_\infty$-isotopic if and only if $\Aa^*$ and  $\Bb^*$ are  weakly equivalent.
	
	We introduce   a stratification   of    isotopy modulo $k$, for all $k \in \N^+$, on the  set of all  minimal  $C_\infty$-algebra enhancements of a finite dimensional graded commutative algebra   (GCA) $H^*$  and show that  two   such  minimal  $C_\infty$-algebra enhancements are $C_\infty$-isotopic if  and only if  they are  isotopic  modulo $k$ for every $k \in \N^+$ (Theorem \ref{thm:iso_from_equiv}).
	As a result, we give     classifications of  real  and rational homotopy types of  simply connected smooth manifolds $M$ with  the same  cohomology  algebra   $H^*(M, \F)$, $\F = \Q, \R$,  in terms   of  a sequence  of  obstruction  classes        defining the   extendability  of  isotopy  modulo  $k$   to isotopy  modulo $(k+1)$ of minimal $C_\infty$-algebra enhancements of   $H^*(M, \F)$.  
	We show that the real and rational homotopy types of a closed
$(r-1)$-connected manifold of dimension
$n\leq\ell(r-1)+2$, where $r\geq2$ and $\ell\geq4$, are determined
uniquely by the cohomology algebra $H^*(M;\F)$ and the isotopy class
modulo $(\ell-2)$ of the corresponding minimal unital cyclic
$C_\infty$-algebra enhancement, for $\F=\R$ and $\F=\Q$,
respectively (Theorem \ref{thm:finite}). Furthermore, we illustrate the technique of Hodge   homotopy, introduced  by Fiorenza--Kawai--L\^e--Schwachh\"ofer   in \cite{FKLS2021}, and developed further  by  Fiorenza--L\^e  in \cite{FiorenzaLe2025}, to give a new proof  of a formality  theorem  for $(r-1)$-connected  closed manifolds $M$ of dimension  $4r-1$  with $b_r \le 3$ admitting  the Lefschetz  property  due to Crowley--Nordstr\"om \cite{CN}. We also   prove a borderline extension of a vanishing theorem of
	Fiorenza--L\^e: if an $(r-1)$-connected Poincar\'e DGCA over $\Q$ of degree
	$n\leq5r-2$ admits a Hodge homotopy and satisfies $b^r\leq2$, then
	the operations of its transferred minimal unital cyclic
	$C_\infty$-algebra vanish in every arity $k\geq4$.
	
	Our paper  is organized as follows.	 In  Section \ref{sec:hodge}, we  recall the concepts  of   a Hodge homotopy $d^-$, the   corresponding harmonic subspace,   and the Hodge decomposition, which is  stronger  than  those considered  by Chuang-Lazarev \cite{ChuangLazarev2008} and   by Kajiura \cite{Kajiura2007}.   In Section \ref{sec:cyclicity}, we show  that   the transferred  minimal $C_\infty$-algebra associated   with  a Hodge homotopy $d^-$ is unital  cyclic (Theorem \ref{thm:cyclic}).  We also consider  several symmetries of the   minimal unital  cyclic $C_\infty$-algebra, transferred using  Hodge  homotopy,  that shall  be   needed  in the   proof  of formality  and almost formality theorems  in later sections. Section \ref{sec:class}  is devoted  to  classification  of real and rational  homotopy type  of  simply connected topological spaces, in particular, closed simply connected manifolds, using   minimal $C_\infty$-algebra and  minimal unital  cyclic $C_\infty$-algebra  enhancements and their isotopy modulo $k$, $k \in \N^+$  (Theorem \ref{thm:iso_from_equiv}),  deformation   theory of    filtered differential  graded  Lie algebras (DGLAs), Harrison cohomology,  and  cyclic  Harrison cohomology. We  give explicit formulas for the first and secondary obstruction classes (Proposition \ref{prop:equiv3}, Lemmas \ref{lem:2obst}). We      describe   the  generalized affine  structure  of the  set of complete  invariants  of  the rational  homotopy types  of  simply-connected  topological spaces  with a  given  cohomological algebra of finite dimension  (Theorem \ref{thm:additivek},  Proposition \ref{prop:obstruction-additivity}). As a consequence, we  give a  proof  of Theorem \ref{thm:finite} on  the  rational homotopy type of a $(r-1)$-connected, $r\ge 2$,  manifold  of dimension less than or equal $\ell(r-1)+2$, $\ell\geq4$.  We  illustrate   our technique  in Subsection  \ref{subs:example}.   In Sections  \ref{sec:cavalcanti}  and \ref{sec:cavalcantiex}, we give   proofs of formality and vanishing theorems (Theorems \ref{thm:CCN}, \ref{thm:m4}),  using  Hodge homotopy and  the  resulting  symmetries  of   the transferred  minimal unital cyclic  $C_\infty$-algebras, proved in Section \ref{sec:cyclicity}.  In the last section \ref{sec:final} we   discuss related  results  and some open   questions. 
	
	The results presented in this paper concerning the case of $(r-1)$-connected manifolds of dimension $n = 4r-1$ with $b_r \le 3$, as well as several foundational technical lemmas in the preliminaries, were developed in collaboration with Domenico Fiorenza. The author is deeply grateful for his remarks  about a relation between  the Hamilton-Lazarev obstruction theory  and the theory  we develop in this paper, and for his generous guidance. The proofs regarding  the higher-dimensional bounds up to $5r-2$ and $\ell(r-1)+2$,  the  introduction  of  isotopy modulo $k$ and the resulting stratification on  the set of minimal $C_\infty$-algebra  enhancements, the gauge-theoretic analysis of the  higher obstruction classes are the work of the author.

		\section*{Acknowledgement}
	
	The author wishes to thank Sapienza Universit\`a di Roma for its
hospitality and excellent working conditions during her visit in
September 2025, when part of this work was discussed. The author
thanks Pavel H\'ajek and Johannes Nordstr\"om for sending her their
papers \cite{CHV2022} and \cite{NN2021}, respectively. The author
acknowledges the assistance of Google's Gemini and OpenAI's ChatGPT in
checking the Fern\'andez--Mu\~noz example in Subsection
\ref{subs:example}, and thanks xAI's Grok for symbolic calculations
related to the Ricci cochain in Proposition \ref{prop:a3}. The author
also acknowledges OpenAI's ChatGPT, Anthropic's Claude, and DeepSeek
for assistance in improving the readability of the exposition. This
research was supported by the Institute of Mathematics of the Czech
Academy of Sciences (RVO 67985840).

	\section{Hodge homotopy and unital cyclic $C_\infty$-algebras}\label{sec:hodge}

In this section we briefly recall the concept of a Poincar\'e DGCA
(PDGCA) admitting a Hodge homotopy, introduced by
Fiorenza--Kawai--L\^e--Schwachh\"ofer in \cite{FKLS2021} and developed
further by Fiorenza--L\^e in \cite{FiorenzaLe2025}. We also observe
that, with the convention of \cite[Definition 3.2(2)]{ChuangLazarev2008},
which allows a possibly degenerate invariant pairing, every PDGCA is
naturally a strictly unital cyclic $C_\infty$-algebra concentrated in
arities $1$ and $2$; see Remark \ref{rem:gss}(3)--(4).
	
	Throughout this section, $\F$ is a field of characteristic zero. Given a vector space $V$ over $\F$, we denote by $V^\vee$ its dual space.
	
	\begin{definition}[\cite{FiorenzaLe2025}, Definition 2.1; cf. \cite{LS04}, Definition 4.1]
		A \emph{PDGCA $(\Aa^*,d)$ of degree $n$} over $\F$ is a DGCA
		\[
		\Aa^*=\bigoplus_{k=0}^n \Aa^k
		\]
		whose cohomology ring $H^*(\Aa)$ is finite-dimensional over $\F$ and is equipped with a \emph{fundamental class}
		\[
		\int\in \bigl(H^n(\Aa)\bigr)^\vee
		\]
		such that the pairing
		\begin{equation}\label{eq:pairingh}
			\langle \alpha^k,\beta^l\rangle=
			\begin{cases}
				\displaystyle \int \alpha^k\cdot\beta^l,& k+l=n,\\
				0,&\text{otherwise},
			\end{cases}
		\end{equation}
		for $\alpha^k\in H^k(\Aa)$ and $\beta^l\in H^l(\Aa)$ is nondegenerate; equivalently,
		\[
		\langle \alpha,H^*(\Aa)\rangle=0
		\quad\Longleftrightarrow\quad
		\alpha=0.
		\]
	\end{definition}

	The pairing on $H^*(\Aa)$ induces a pairing on $\Aa^*$ by
	\begin{equation}\label{eq:pairing}
		\langle \alpha^k,\beta^l\rangle=
		\begin{cases}
			\displaystyle \int[\alpha^k\cdot\beta^l],& k+l=n,\\
			0,&\text{otherwise},
		\end{cases}
	\end{equation}
	where
	\[
	[\,\cdot\,]\colon \Aa^n=\ker\bigl(d\colon \Aa^n\to \Aa^{n+1}\bigr)\longrightarrow H^n(\Aa)
	\]
	is the canonical quotient map. Following \cite[Definition 2.2]{FKLS2021}, the PDGCA $(\Aa^*,d)$ is called \emph{nondegenerate} if the pairing \eqref{eq:pairing} on $\Aa^*$ is nondegenerate. This is the chain-level Poincar\'e duality condition appearing in the notion of an oriented differential Poincar\'e duality algebra; cf. \cite[Definition 2.2]{LS04}. A PDGCA $(H^*,d=0)$ is called a \emph{Poincar\'e GCA}, abbreviated as  PGCA.

\begin{definition}\label{def:hodgehomotopy}
	\cite[Definition 2.2]{FiorenzaLe2025}.
	Let $(\Aa^*,d,\cdot,\langle-,-\rangle)$ be a Poincar\'e DGCA. A \emph{Hodge homotopy} for $(\Aa^*,d,\cdot,\langle-,-\rangle)$ is a degree $-1$ operator
	\[
	d^-\colon \Aa^*\longrightarrow \Aa^{*-1}
	\]
	such that
	\begin{equation}\label{eq:commute}
		(d^-)^2=0,\qquad d^-dd^-=d^-,\qquad dd^-d=d,
	\end{equation}
	and
	\begin{equation}\label{eq:orthogonality}
		\langle \operatorname{Im}(d^-),\operatorname{Im}(d^-)\rangle=0,
		\qquad
		\langle \operatorname{Im}(\pi_{\mathcal H^*}),\operatorname{Im}(d^-)\rangle=0,
	\end{equation}
	where
	\begin{equation}\label{eq:proj}
		\pi_{\mathcal H^*}:=\operatorname{id}_{\Aa^*}-[d,d^-]
		=\operatorname{id}_{\Aa^*}-dd^--d^-d,
	\end{equation}
	and $[d,d^-]=dd^-+d^-d$ is the graded commutator.
\end{definition}

\begin{remark}
	\cite[Remark 2.3]{FiorenzaLe2025}.
	It follows from \eqref{eq:commute} that $dd^-$ and $d^-d$ are idempotents satisfying
	\[
	(dd^-)(d^-d)=(d^-d)(dd^-)=0.
	\]
	Consequently, $dd^-$, $d^-d$, and $\pi_{\mathcal H^*}$ are projection operators.
\end{remark}

	\begin{definition}\label{def:harmonic}
		\cite[Definition 2.2]{FKLS2021} and \cite[Definition 2.4]{FiorenzaLe2025}.
		The graded subspace
		\[
		\mathcal H^*:=\operatorname{Im}(\pi_{\mathcal H^*})
		\]
		is called the \emph{harmonic subspace} associated with the Hodge homotopy $d^-$.
	\end{definition}

	\begin{remark}\label{rem:hodgedecomp}
		\cite[Remark 2.5]{FiorenzaLe2025}.
		It follows from the definition that
		\[
		d\pi_{\mathcal H^*}=\pi_{\mathcal H^*}d
		=d^-\pi_{\mathcal H^*}=\pi_{\mathcal H^*}d^-=0,
		\]
		and that there is a direct sum decomposition
		\begin{equation}\label{eq:hodge1}
			\Aa^*
			=\mathcal H^*\oplus
			\underbrace{dd^-\Aa^*\oplus d^-d\Aa^*}_{\mathcal L_{\mathcal A}^*}.
		\end{equation}
		Moreover, $(\mathcal L_{\mathcal A}^*,d)$ is acyclic and
		\[
		d^-(\mathcal L_{\mathcal A}^*)\subseteq \mathcal L_{\mathcal A}^*.
		\]
		The composition
		\[
		\mathcal H^*\longrightarrow \ker d\longrightarrow H^*(\Aa)
		\]
		is an isomorphism of graded vector spaces.
	\end{remark}

\begin{remark}\label{rem:gss}
	\begin{enumerate}
		\item
		For homogeneous $x,y\in \Aa^*$, the Poincar\'e pairing is graded symmetric:
		\begin{equation}\label{eq:gss}
			\langle x,y\rangle=(-1)^{|x||y|}\langle y,x\rangle.
		\end{equation}
		
		\item
		Since the induced functional on $\Aa^n$ vanishes on exact elements, $d$ is graded skew-adjoint: for homogeneous $x,y\in \Aa^*$,
		\begin{equation}\label{eq:adjoind}
			\langle dx,y\rangle=-(-1)^{|x|}\langle x,dy\rangle.
		\end{equation}
		Together with \eqref{eq:orthogonality}, equations \eqref{eq:gss} and \eqref{eq:adjoind} imply  that the decomposition
		\[
		\Aa^*=\mathcal H^*\oplus \mathcal L_{\mathcal A}^*
		\]
		is orthogonal:
		\[
		\Aa^*=\mathcal H^*\oplus^\perp \mathcal L_{\mathcal A}^*.
		\]
		
		\item
		The pairing is invariant: for homogeneous $x,y,z\in \Aa^*$,
		\begin{equation}\label{eq:wFrob}
			\langle xy,z\rangle=\langle x,yz\rangle.
		\end{equation}
		Set
		\[
		\mu_1(x)=dx,\qquad \mu_2(x,y)=xy,\qquad \mu_k=0\quad(k\ge 3).
		\]
		Then \eqref{eq:adjoind}, \eqref{eq:gss}, and \eqref{eq:wFrob} give
		\[
		\langle \mu_1(x),y\rangle
		=-(-1)^{|x||y|}\langle \mu_1(y),x\rangle
		\]
		and
		\[
		\langle \mu_2(x,y),z\rangle
		=(-1)^{|x|(|y|+|z|)}\langle \mu_2(y,z),x\rangle.
		\]
	 These correspond to the standard cyclicity identities \cite{ChuangLazarev2008}, assuming the convention where the cyclic permutation $z(k)=(0  \,1\,\ldots\,k)\in \mathbb{S}_{k+1}$ 
		acts on the $k$-ary operation by
		\[
		\mu_k^{z(k)}=(-1)^k\mu_k.
		\]
 Here the pairing is allowed to be degenerate; if $(\Aa^*,d)$ is nondegenerate, it is nondegenerate also at the chain level.
		
		\item
		Recall that a $C_\infty$-algebra is \emph{strictly unital} if it has an element $1$ of degree $0$ such that
		\[
		\mu_1(1)=0,
		\qquad
		\mu_2(1,a)=\mu_2(a,1)=a,
		\]
		and, for every $k\ge 3$,
		\[
		\mu_k(a_1,\ldots,1,\ldots,a_k)=0
		\]
		for all $a,a_1,\ldots,a_k\in \Aa^*$; see \cite[(2.8)]{FiorenzaLe2025} and cf. \cite[Definition 8]{CG2008} after translating from the shifted convention used there. Since $\Aa^*$ is a unital DGCA, the cyclic $C_\infty$-structure in (3) is strictly unital. Moreover, $d^-(1)=0$ for degree reasons and $d(1)=0$, hence
		\[
		\pi_{\mathcal H^*}(1)=1.
		\]
		Thus $1\in \mathcal H^0$ is the harmonic representative of the unit $[1]\in H^0(\Aa)$.
	\end{enumerate}
\end{remark}

\begin{examples}\label{ex:hodge}
	\begin{enumerate}
		\item
		Our principal example of a PDGCA over $\R$ admitting a Hodge homotopy is the de Rham complex $(\Om^*(M),d)$ of a closed, connected, oriented Riemannian manifold $M$. Its harmonic subspace is the space of harmonic forms on $M$; see \cite[Remark 2.7(1)]{FKLS2021} and \cite[Example 2.7]{FiorenzaLe2025}. In this case, the pairing \eqref{eq:pairing} is
		\[
		\langle \alpha,\beta\rangle=\int_M \alpha\wedge\beta,
		\]
		and the Hodge homotopy is
		\[
		d^-=Gd^*,
		\]
		where $G$ is the Green operator and $d^*$ is the formal adjoint of $d$. In \cite[Theorem 3.10 and (4.1)]{FKLS2021}, Fiorenza--Kawai--L\^e--Schwachh\"ofer give an explicit construction of a finite-dimensional nondegenerate PDGCA weakly equivalent to $(\Om^*(M),d)$ when $b_1(M)=0$.
		
		\item
		Let $(\Aa^*,d)$ be a simply connected PDGCA over $\Q$. By \cite[Theorem 1.1]{LS08}, it is weakly equivalent to a finite-dimensional nondegenerate PDGCA over $\Q$, equivalently, to a differential Poincar\'e duality algebra. By \cite[Lemma 11.1]{CFL2020}, every such finite-dimensional nondegenerate PDGCA admits a Hodge homotopy.
	\end{enumerate}
\end{examples}

	\section{Cyclicity of the minimal unital $C_\infty$-algebra via Hodge homotopy transfer}\label{sec:cyclicity}
	
Let $(\Aa^*,d,\cdot,\langle-,-\rangle)$ be a PDGCA of degree $n$ over a field $\F$ of characteristic zero, equipped with a Hodge homotopy $d^{-}$. We do not require the pairing $\langle-,-\rangle$ on $\Aa^*$ to be nondegenerate, nor do we assume that $H^1(\Aa^*)=0$. We show that the unital minimal $C_\infty$-enhancement of $(\Hh^*,d=0)$ obtained from $(\Aa^*,d,\cdot,\langle-,-\rangle)$ by homotopy transfer via $d^-$ is cyclic (Theorem \ref{thm:cyclic}).

Let $j\colon\Hh^*\hookrightarrow\Aa^*$ denote the inclusion.

\begin{lemma}\label{lem:adjoint}
	We have:
	\begin{enumerate}
		\item $j$ is an isometric embedding;
		\item $\pi_{\Hh^*}$ is graded self-adjoint:
		\begin{equation}\label{eq:adjoini}
			\langle \pi_{\Hh^*}x,y\rangle
			=
			\langle x,\pi_{\Hh^*}y\rangle
			\qquad
			\text{for all homogeneous }x,y\in\Aa^*.
		\end{equation}
		In particular, for all $x\in\Hh^*$ and $y\in\Aa^*$,
		\[
		\langle x,y\rangle=\langle x,\pi_{\Hh^*}y\rangle.
		\]
		\item $d^-$ is graded self-adjoint:
		\begin{equation}\label{eq:adjoin1}
			\langle d^-x,y\rangle
			=
			(-1)^{|x|}\langle x,d^-y\rangle
			\qquad
			\text{for all homogeneous }x,y\in\Aa^*.
		\end{equation}
	\end{enumerate}
\end{lemma}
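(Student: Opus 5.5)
(1) is immediate: $j$ is an inclusion and the pairing on $\Hh^*$ is, by definition, the restriction of the pairing \eqref{eq:pairing} on $\Aa^*$. It is also compatible with the cohomology pairing \eqref{eq:pairingh}, because $\Hh^*\to H^*(\Aa)$ is an isomorphism (Remark \ref{rem:hodgedecomp}) and harmonic elements are closed, so $\int[xy]$ only sees their classes.

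For (2) I will use the orthogonal decomposition $\Aa^*=\Hh^*\oplus^\perp\mathcal L_{\mathcal A}^*$ from Remark \ref{rem:gss}(2). Write $x=\pi_{\Hh^*}x+x'$ and $y=\pi_{\Hh^*}y+y'$ with $x',y'\in\mathcal L^*_{\mathcal A}$. Then
\[
\langle \pi_{\Hh^*}x,y\rangle=\langle \pi_{\Hh^*}x,\pi_{\Hh^*}y\rangle=\langle x,\pi_{\Hh^*}y\rangle.
\]
The "in particular" statement is the case $x=\pi_{\Hh^*}x$. Since this orthogonality is only asserted in the remark, I will check it directly. First, $\langle\Hh,d^-\Aa\rangle=0$ is \eqref{eq:orthogonality}. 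Second, $\langle h,dd^-a\rangle=\pm\langle dh,d^-a\rangle=0$ by \eqref{eq:adjoind}, since $dh=0$. Third, $\langle h, d^-da\rangle=0$ because $d^-da\in\operatorname{Im}d^-$. Graded symmetry \eqref{eq:gss} handles the order of the arguments.

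For (3) I will decompose both arguments according to \eqref{eq:hodge1}, as $x=h+dd^-x+d^-dx$, and similarly for $y$. The operator $d^-$ kills $\Hh$ and $d^-d\Aa$ (since $(d^-)^2=0$), and it maps $dd^-x$ to $d^-dd^-x=d^-x$. Hence $d^-x=d^-(dd^-x)$, so both sides only involve the components $dd^-x$ and $dd^-y$. Put $x=du$ and $y=dv$ with $u=d^-x$ and $v=d^-y$, both in $\operatorname{Im}d^-$. The terms of $x$ not of this form pair to zero with $d^-y$. Indeed, $\langle h,d^-y\rangle=0$ by \eqref{eq:orthogonality}, and $\langle d^-dx,d^-y\rangle=0$ since both entries lie in $\operatorname{Im}d^-$. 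The same holds with the roles of $x$ and $y$ exchanged.

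The remaining identity is $\langle d^-du,dv\rangle=(-1)^{|u|+1}\langle du,d^-dv\rangle$ for $u,v\in\operatorname{Im}d^-$. Since $d^-d$ is the identity on $\operatorname{Im}d^-$ (by $d^-dd^-=d^-$), this reads $\langle u,dv\rangle=(-1)^{|u|+1}\langle du,v\rangle$. That is exactly the skew-adjointness \eqref{eq:adjoind} combined with graded symmetry \eqref{eq:gss}.

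The main obstacle is the sign bookkeeping in this last step, where degrees shift by one and the pairing is only nonzero when degrees sum to $n$. I will fix the degrees: if $|x|=k$ then $|u|=k-1$, and $|y|=n-k+1$. Then I will reconcile $(-1)^{|x|}$ with the sign coming from \eqref{eq:adjoind}, applying \eqref{eq:gss} where needed to swap arguments.
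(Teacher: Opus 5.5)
Your proposal is correct and follows the paper's proof. Parts (1) and (2) come from the orthogonal decomposition $\Aa^*=\Hh^*\oplus^\perp\mathcal L_{\mathcal A}^*$, and part (3) comes from discarding the components orthogonal to $\operatorname{Im}(d^-)$ and applying \eqref{eq:adjoind} once, as in the paper's chain $\langle d^-x,y\rangle=\langle d^-x,dd^-y\rangle=(-1)^{|x|}\langle dd^-x,d^-y\rangle=(-1)^{|x|}\langle x,d^-y\rangle$. The only differences are cosmetic: you reduce both arguments symmetrically, and your final sign $(-1)^{|u|+1}=(-1)^{|x|}$ follows directly from \eqref{eq:adjoind}, so graded symmetry is not needed there.
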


\begin{proof}
	The first two assertions follow from the orthogonal Hodge decomposition \eqref{eq:hodge1}. To prove the last assertion, we use
	\[
	\operatorname{id}_{\Aa^*}
	=
	\pi_{\Hh^*}+dd^-+d^-d,
	\]
	the graded skew-adjointness of $d$ in \eqref{eq:adjoind}, and the orthogonality relations:
	\[
	\langle d^-x,y\rangle
	=
	\langle d^-x,dd^-y\rangle
	=
	(-1)^{|x|}\langle dd^-x,d^-y\rangle
	=
	(-1)^{|x|}\langle x,d^-y\rangle.
	\]
\end{proof}

\begin{remark}
	By \eqref{eq:proj}, the maps
	\[
	(\Hh^*,0)
	\mathrel{\substack{\xrightarrow{\ j\ }\\[-0.6ex]\xleftarrow[\ \pi_{\Hh^*}\ ]{}}}
	(\Aa^*,d)
	\]
	satisfy
	\[
	\pi_{\Hh^*}j=\operatorname{id}_{\Hh^*},
	\qquad
	\operatorname{id}_{\Aa^*}-j\pi_{\Hh^*}
	=
	dd^-+d^-d.
	\]
	Hence they form contraction data, with homotopy $d^-$ under the convention
	$\operatorname{id}-j\pi=dh+hd$ (and with homotopy $-d^-$ under the convention
	$j\pi-\operatorname{id}=dh+hd$). We may therefore transfer the unital DGCA structure on $(\Aa^*,d)$ to a $C_\infty$-structure on $\Hh^*$. This structure is minimal because the differential on $\Hh^*$ is zero.
\end{remark}

\begin{theorem}\label{thm:cyclic}
	The transferred minimal $C_\infty$-algebra associated with the Hodge homotopy $d^-$, denoted by $C_\infty(\Hh^*)$, is unital and cyclic. More precisely, under the convention that the cyclic generator
	\[
	z(k)=(0,1,\dots,k)\in\mathbb S_{k+1}
	\]
	acts on the operation $m_k$ by 
	$m_k^{z(k)}=(-1)^km_k$, one has
	\begin{equation}\label{eq:cyclicsign}
		\langle m_k(x_1,\dots,x_k),x_{k+1}\rangle
		=
		(-1)^k(-1)^\epsilon
		\langle m_k(x_2,\dots,x_{k+1}),x_1\rangle,
	\end{equation}
	for all homogeneous $x_1,\dots,x_{k+1}\in\Hh^*$, where
	\[
	\epsilon
	=
	|x_1|\bigl(|x_2|+\cdots+|x_{k+1}|\bigr).
	\]
\end{theorem}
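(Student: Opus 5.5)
We will work with the explicit tree formula for the transferred operations supplied by the homotopy transfer theorem applied to the contraction data $(j,\pi_{\Hh^*},d^-)$. Concretely, $m_2(x_1,x_2)=\pi_{\Hh^*}(x_1x_2)$, and for $k\ge3$ the operation $m_k$ is a signed sum over planar binary rooted trees $T$ with $k$ leaves. In each such tree the leaves carry the harmonic inputs, every internal vertex is the product $\mu_2$, every internal edge carries $-d^-$ (or $d^-$, up to a global sign convention), and the root carries $\pi_{\Hh^*}$. Equivalently, we may use the recursive Merkulov-type formula $m_k=\pi_{\Hh^*}\circ\lambda_k$, where
\[
\lambda_k=\sum_{i+j=k}\pm\,\mu_2\bigl(d^-\lambda_i\otimes d^-\lambda_j\bigr)
\]
and $d^-\lambda_1$ is interpreted as $j$. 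Unitality should be checked first. Since $\pi_{\Hh^*}(1)=1$ and $d^-(1)=0$ (Remark \ref{rem:gss}(4)), and since $\mu_2(1,a)=a$, any tree in which a leaf labelled $1$ is attached to an internal vertex whose other input is $d^-(\text{something})$ outputs $d^-(\cdots)$ at that vertex. That output is then either hit by $d^-$ again, giving zero because $(d^-)^2=0$, or hit by $\pi_{\Hh^*}$ at the root, giving zero because $\pi_{\Hh^*}d^-=0$. The only surviving configuration is the one with $k=2$. Hence $m_k(\dots,1,\dots)=0$ for $k\ge3$ and $m_2(1,x)=x$.

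For cyclicity, we pair $m_k(x_1,\dots,x_k)$ with $x_{k+1}$ and use Lemma \ref{lem:adjoint}(2) to drop $\pi_{\Hh^*}$, since $x_{k+1}$ is harmonic. This gives $\langle\lambda_k(x_1,\dots,x_k),x_{k+1}\rangle$, which is a sum over trees of pairings. The plan is to read each tree as an unrooted trivalent tree with $k+1$ external legs, in which leg $k+1$ plays the role of the root. The claim is that the value of the pairing depends only on this unrooted planar tree together with the cyclic order of its legs, up to an explicit Koszul sign. To see this, we move the "root" across an internal vertex using invariance \eqref{eq:wFrob} together with graded symmetry \eqref{eq:gss}, which gives $\langle ab,c\rangle=\pm\langle a,bc\rangle=\pm\langle bc,a\rangle$. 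We move it across an internal edge using the self-adjointness of $d^-$ in Lemma \ref{lem:adjoint}(3), which gives $\langle d^-u,v\rangle=(-1)^{|u|}\langle u,d^-v\rangle$. Iterating these two moves transports the output leg from position $k+1$ to position $1$. The set of planar binary trees with $k+1$ cyclically ordered legs is invariant under rotation, so summing over trees turns $\langle\lambda_k(x_1,\dots,x_k),x_{k+1}\rangle$ into a signed version of $\langle\lambda_k(x_2,\dots,x_{k+1}),x_1\rangle$. Applying Lemma \ref{lem:adjoint}(2) once more then restores $m_k$.

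The main obstacle is sign bookkeeping. We must show that the accumulated sign is uniformly $(-1)^k(-1)^\epsilon$ and, in particular, independent of the tree. To make this tractable, we will pass to the suspended (bar) conventions, where $d^-$ has degree $0$ on $s\Aa$ and the pairing becomes graded antisymmetric of degree $n-2$. In these conventions the tree sums carry no internal signs, and the cyclic invariance of $\mu_2$ and of $d^-$ becomes a plain graded-symmetry statement: both are cyclic in the sense of \cite{ChuangLazarev2008}, by Remark \ref{rem:gss}(3) and Lemma \ref{lem:adjoint}(3). In the shifted setting, a composite of cyclic tensors along a tree contracted with a cyclic pairing is cyclic; this is the standard fact that cyclic operads are closed under tree composition. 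Translating the result back through the desuspension yields the factor $(-1)^k(-1)^\epsilon$ in \eqref{eq:cyclicsign}. As a fallback, we will verify the sign directly by induction on $k$ using the recursive formula for $\lambda_k$, checking the case $k=2$ against Remark \ref{rem:gss}(3).
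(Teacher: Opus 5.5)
Your argument is correct in outline, but it takes a more hands-on route than the paper. The paper never touches trees. It observes that Lemma~\ref{lem:adjoint} (isometric $j$, self-adjoint $\pi_{\Hh^*}$, graded self-adjoint $d^-$), together with graded symmetry of the pairing, says exactly that $(\langle-,-\rangle,\pi_{\Hh^*},d^-)$ is a harmonious Hodge decomposition in the sense of Chuang--Lazarev, and then quotes their Theorem~4.2 for cyclicity. For unitality it lists the side conditions $\pi_{\Hh^*}d^-=d^-\pi_{\Hh^*}=0$, $(d^-)^2=0$, $d^-(1)=0$ and quotes Cheng--Getzler. You use the same inputs, but you reprove the two cited results in this special case from the explicit planar-tree formula. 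For cyclicity you reroot the trivalent tree across vertices (invariance plus graded symmetry) and across internal edges (self-adjointness of $d^-$). For unitality you kill every tree containing a unit leaf using the side conditions. Your route buys transparency. It shows concretely that degeneracy of the chain-level pairing is harmless, since only identities and never nondegeneracy are used, and it makes visible where the tree-independence of the sign in \eqref{eq:cyclicsign} comes from. The paper's route buys brevity and avoids the sign bookkeeping altogether. Note also that your sign argument is, in the end, delegated to the ``standard fact'' that cyclic operads are closed under tree composition, which is essentially the same Chuang--Lazarev/Kajiura argument the paper cites.

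Two small points to fix if you write it out. First, in the unitality step you only treat a unit leaf whose sibling is a subtree $d^-(\cdots)$. If the sibling is another leaf $x\in\Hh^*$ and the vertex is not the root, the vertex outputs $x$. This is then killed by the next internal edge because $d^-x=d^-\pi_{\Hh^*}x=0$, not by $(d^-)^2=0$ or $\pi_{\Hh^*}d^-=0$. Second, under suspension $d^-$ still has degree $-1$. What has degree $0$ in the shifted conventions is the composite of $d^-$ with the shifted product (which has degree $+1$ on $s\Aa$), and that is what makes the shifted tree sums free of internal Koszul signs.
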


\begin{proof}[Proof of Theorem \ref{thm:cyclic}]
	Since the Poincar\'e pairing is graded symmetric by \eqref{eq:gss}, Lemma \ref{lem:adjoint} and the defining properties of a Hodge homotopy show that the pairing $\langle-,-\rangle$, the projection $\pi_{\Hh^*}$, and the homotopy $d^-$ form a harmonious Hodge decomposition in the sense of \cite[Definition 2.1]{ChuangLazarev2008}. Hence \cite[Theorem 4.2]{ChuangLazarev2008} implies that the transferred minimal $C_\infty$-algebra is cyclic.
Moreover, the DGCA structure on $\Aa^*$ is a unital $C_\infty$-algebra with unit $1\in\Aa^0$. The Hodge homotopy satisfies the side conditions
	\begin{equation}\label{eq:side}
		\pi_{\Hh^*}d^-=d^-\pi_{\Hh^*}=0,
		\qquad
		(d^-)^2=0,
		\qquad
		d^-(1)=0.
	\end{equation}
	Therefore, by \cite[Theorems 10 and 12]{CG2008}, the transferred minimal $C_\infty$-algebra $C_\infty(\Hh^*)$ is unital (under the translation of  conventions noted in Remark \ref{rem:gss}(4)).
\end{proof}

\begin{remark}\label{rem:cyclic}
	The cyclicity assertion in Theorem \ref{thm:cyclic} can also be deduced from \cite[Lemma 5.23]{Kajiura2007}, where Kajiura uses a Hodge--Kodaira decomposition to obtain a minimal cyclic $A_\infty$-algebra from a cyclic $A_\infty$-algebra. Our Hodge decomposition \eqref{eq:hodge1} is a particular case of the Hodge--Kodaira decomposition considered in \cite[Section 5.2]{Kajiura2007}.
\end{remark}

\begin{remark}\label{rem:vanishmkr}
	Let $r\geq1$, and let $(\Aa^*,d,\cdot,\langle-,-\rangle)$ be an $(r-1)$-connected Poincar\'e DGCA of degree $n$ admitting a Hodge homotopy $d^-$ with harmonic subspace $\Hh^*$. Explicit formulas for the operations $m_k$ of $C_\infty(\Hh^*)$ are obtained from \cite[Theorem 3.4]{Merkulov1999} and \cite[Theorem 12]{CG2008}. Define recursively
	\begin{align}\label{eq:hat-mk}
		\widehat m_2(\alpha_1,\alpha_2)
		&=
		\alpha_1\cdot\alpha_2,
		\notag\\
		\widehat m_k(\alpha_1,\dots,\alpha_k)
		&=
		(-1)^{k-1}d^-\widehat m_{k-1}(\alpha_1,\dots,\alpha_{k-1})\cdot\alpha_k
		\notag\\
		&\quad
		-
		(-1)^{k|\alpha_1|}
		\alpha_1\cdot d^-\widehat m_{k-1}(\alpha_2,\dots,\alpha_k)
		\notag\\
		&\quad
		-
		\sum_{i=2}^{k-2}
		(-1)^\nu
		d^-\widehat m_i(\alpha_1,\dots,\alpha_i)
		\cdot
		d^-\widehat m_{k-i}(\alpha_{i+1},\dots,\alpha_k),
	\end{align}
	for $k\geq3$, where
	\[
	\nu
	=
	i+(k-i-1)\bigl(|\alpha_1|+\cdots+|\alpha_i|\bigr),
	\]
	the $\alpha_i$ are elements of $\Hh^*\subseteq\Aa^*$, and the sum is empty for $k=3$. Then
	\begin{equation}\label{eq:mk}
		m_k(\alpha_1,\dots,\alpha_k)
		=
		\pi_{\Hh^*}\bigl(\widehat m_k(\alpha_1,\dots,\alpha_k)\bigr).
	\end{equation}
	For instance,
	\begin{align}
		m_3(x,y,z)
		&=
		\pi_{\Hh^*}\bigl(d^-(x\cdot y)\cdot z\bigr)
		\notag\\
		&\quad
		-
		(-1)^{|x|}
		\pi_{\Hh^*}\bigl(x\cdot d^-(y\cdot z)\bigr).
		\label{eq:m3}
	\end{align}
	The transferred structure is strictly unital:
	\[
	m_2(1,\alpha)=m_2(\alpha,1)=\alpha,
	\]
	and, for every $k\geq3$, $m_k(\alpha_1,\dots,\alpha_k)=0$ whenever one of the $\alpha_i$ is equal to $1$.
	
	In \cite[Remark 2.15]{FiorenzaLe2025}, Fiorenza--L\^e showed that, for every $k\geq3$ and every $\alpha\in\Hh^r$,
	\begin{equation}\label{eq:vanishmkr}
		\widehat m_k(\alpha,\dots,\alpha)
		=
		m_k(\alpha,\dots,\alpha)
		=
		0.
	\end{equation}
\end{remark}

\begin{lemma}[\bfseries Generalized Odd Symmetry]\label{lem:oddsymmetry}
	Assume that $\Aa^*$ is $(r-1)$-connected and that $r$ is odd. Then, for every $k\geq3$ and every $x,y,z\in\Hh^r$,
	\begin{align}
		\widehat m_k(x,\dots,x,y)
		&=
		(-1)^{k-1}\widehat m_k(y,x,\dots,x),
		\label{eq:symmetry1}\\
		\widehat m_k(x,y,\dots,y,z)
		&=
		(-1)^{k-1}\widehat m_k(z,y,\dots,y,x).
		\label{eq:symmetry2}
	\end{align}
\end{lemma}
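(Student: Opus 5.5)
The plan is to deduce both identities from a single \emph{reversal symmetry}. For every $k\geq 2$ and all $a_1,\dots,a_k\in\Hh^r$ with $r$ odd, I claim
\begin{equation*}
\widehat m_k(a_1,\dots,a_k)=(-1)^{k-1}\,\widehat m_k(a_k,\dots,a_1). \tag{R$_k$}
\end{equation*}
Once (R$_k$) holds, \eqref{eq:symmetry1} is the case $(a_1,\dots,a_k)=(x,\dots,x,y)$, whose reversal is $(y,x,\dots,x)$. Likewise \eqref{eq:symmetry2} is the case $(x,y,\dots,y,z)$, whose reversal is $(z,y,\dots,y,x)$. So it suffices to prove (R$_k$) by induction on $k$, using the recursion \eqref{eq:hat-mk}.

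\textbf{Parity observation.} For $a_i\in\Hh^r$, the element $\widehat m_i(a_1,\dots,a_i)$ has degree $ir-(i-2)$. Hence $d^-\widehat m_i(a_1,\dots,a_i)$ has degree $ir+1-i\equiv i+1-i\equiv 1 \pmod 2$, because $r$ is odd. Consequently every factor appearing in the recursion is odd: each $a_k\in\Hh^r$ and each $d^-\widehat m_i(\cdots)$. Any two such factors therefore anticommute in the graded commutative algebra $\Aa^*$: $uv=-vu$. In all sign exponents we may also replace $|a_j|$ by $1$, so $(-1)^{k|a_1|}=(-1)^k$ and $\nu\equiv i+(k-i-1)i\pmod 2$.

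\textbf{Base case and inductive step.} The base case $k=2$ is $a_1a_2=-a_2a_1$, which holds since the $a_i$ are odd. For the inductive step, write $\widehat m_k(a_k,\dots,a_1)$ using \eqref{eq:hat-mk} and match its three groups of terms against those of $\widehat m_k(a_1,\dots,a_k)$, using (R$_{k-1}$) and (R$_i$) inside $d^-$ together with anticommutation of the odd factors.
\begin{itemize}
\item[(i)] The first term of the reversed expansion, $(-1)^{k-1}d^-\widehat m_{k-1}(a_k,\dots,a_2)\cdot a_1$, should correspond to the second term $-(-1)^{k}a_1\cdot d^-\widehat m_{k-1}(a_2,\dots,a_k)$ of the original expansion.
\item[(ii)] Symmetrically, the second term of the reversed expansion should correspond to the first term of the original one.
\item[(iii)] In the middle sum, the summand indexed by $i$ in the reversed expansion, namely $d^-\widehat m_i(a_k,\dots,a_{k-i+1})\cdot d^-\widehat m_{k-i}(a_{k-i},\dots,a_1)$, should correspond to the summand indexed by $k-i$ in the original. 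To see this, swap the two odd factors (sign $-1$) and apply the induction hypothesis to each factor (sign $(-1)^{i-1}(-1)^{k-i-1}=(-1)^k$).
\end{itemize}

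In each of (i)--(iii), one must check that the accumulated sign is exactly $(-1)^{k-1}$ times the sign of the matching term in the original expansion. For (i), a quick count gives: $(-1)^{k-1}$ from the recursion coefficient, $(-1)^{k-2}$ from (R$_{k-1}$), and $-1$ from anticommuting; this compared against $-(-1)^k$ looks consistent.

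\textbf{Main obstacle.} The only real difficulty is the sign bookkeeping in the middle sum (iii). There one needs the parity identity
\[
\nu(i)+\nu(k-i)+1+k\equiv k-1 \pmod 2 ,
\]
with $\nu(i)\equiv i+(k-i-1)i$. I would verify this by reducing modulo $2$ and separating the cases $k$ even and $k$ odd. If it fails, the likely fix is a misreading of the sign convention in $\nu$, which would be checked against the case $k=4$ directly. I would also cross-check the whole argument against \eqref{eq:vanishmkr}: taking all $a_i=\alpha$, (R$_k$) with $k$ even forces $\widehat m_k(\alpha,\dots,\alpha)=0$, which is consistent with that result.
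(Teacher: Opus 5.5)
Your proposal is correct and follows the paper's route: it too proves the reflection identity $\widehat m_k(a_1,\dots,a_k)=(-1)^{k-1}\widehat m_k(a_k,\dots,a_1)$ by induction on $k$ through the recursion, using that every $a_s$ and every $d^-\widehat m_j(\cdots)$ is odd, matching the two boundary terms crosswise, and reindexing $i\mapsto k-i$ in the quadratic sum. The sign check you left open in (iii) closes at once, since $\nu(i)\equiv i+(k-i-1)i\equiv i(k-i)\pmod 2$ is invariant under $i\mapsto k-i$, so $\nu(i)+\nu(k-i)$ is even; this is exactly the observation the paper uses.
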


\begin{proof}
	We prove the following more general reflection identity:
	\begin{equation}\label{eq:odd-reflection}
		\widehat m_k(a_1,\dots,a_k)
		=
		(-1)^{k-1}\widehat m_k(a_k,\dots,a_1)
	\end{equation}
	for every $k\geq2$ and every $a_1,\dots,a_k\in\Hh^r$.
	
	For convenience, set
	\[
	D_j(a_1,\dots,a_j)
	\coloneqq
	d^-\widehat m_j(a_1,\dots,a_j).
	\]
	Since $\widehat m_j$ has degree $2-j$ and each $a_s$ has the odd degree $r$, we have
	\[
	\bigl|\widehat m_j(a_1,\dots,a_j)\bigr|
	=
	jr+2-j
	\equiv0\pmod2.
	\]
	Consequently,
	\[
	\bigl|D_j(a_1,\dots,a_j)\bigr|
	\equiv1\pmod2.
	\]
	Thus every $a_s$ and every value of $D_j$ occurring below has odd degree.
	
	For inputs $a_1,\dots,a_k\in\Hh^r$, the sign in the quadratic part of
	\eqref{eq:hat-mk} simplifies to
	\[
	\nu
	=
	i+(k-i-1)(|a_1|+\cdots+|a_i|)
	\equiv
	i+(k-i-1)ir
	\equiv
	i(k-i)
	\pmod2,
	\]
	because $r$ is odd. Moreover,
	\[
	-(-1)^{k|a_1|}
	=
	-(-1)^{kr}
	=
	(-1)^{k-1}.
	\]
	Hence the recursion \eqref{eq:hat-mk} becomes
	\begin{align}
		\widehat m_k(a_1,\dots,a_k)
		&=
		(-1)^{k-1}
		D_{k-1}(a_1,\dots,a_{k-1})\cdot a_k
		\notag\\
		&\quad+
		(-1)^{k-1}
		a_1\cdot D_{k-1}(a_2,\dots,a_k)
		\notag\\
		&\quad-
		\sum_{i=2}^{k-2}
		(-1)^{i(k-i)}
		D_i(a_1,\dots,a_i)
		\cdot
		D_{k-i}(a_{i+1},\dots,a_k).
		\label{eq:odd-recursion}
	\end{align}
	
	We now prove \eqref{eq:odd-reflection} by induction on $k$.
	
	For $k=2$, graded commutativity and the oddness of $r$ give
	\[
	\widehat m_2(a_1,a_2)
	=
	a_1\cdot a_2
	=
	(-1)^{r^2}a_2\cdot a_1
	=
	-\widehat m_2(a_2,a_1),
	\]
	which is precisely \eqref{eq:odd-reflection} for $k=2$.
	
	Assume that \eqref{eq:odd-reflection} holds in every arity smaller
	than $k$. Applying $d^-$ to the inductive identity gives
	\begin{equation}\label{eq:D-reflection}
		D_j(a_1,\dots,a_j)
		=
		(-1)^{j-1}D_j(a_j,\dots,a_1)
	\end{equation}
	for every $2\leq j<k$.
	
	We compare the right-hand side of \eqref{eq:odd-recursion} with the
	same formula applied to the reversed sequence
	\[
	(a_k,a_{k-1},\dots,a_1).
	\]
	
	First consider the two boundary terms. By \eqref{eq:D-reflection},
	\begin{align*}
		&D_{k-1}(a_k,\dots,a_2)\cdot a_1
		+
		a_k\cdot D_{k-1}(a_{k-1},\dots,a_1)
		\\
		&\qquad=
		(-1)^{k-2}
		D_{k-1}(a_2,\dots,a_k)\cdot a_1
		\\
		&\qquad\quad+
		(-1)^{k-2}
		a_k\cdot D_{k-1}(a_1,\dots,a_{k-1}).
	\end{align*}
	Both factors in each product are odd. Graded commutativity therefore
	gives
	\[
	D_{k-1}(a_2,\dots,a_k)\cdot a_1
	=
	-a_1\cdot D_{k-1}(a_2,\dots,a_k)
	\]
	and
	\[
	a_k\cdot D_{k-1}(a_1,\dots,a_{k-1})
	=
	-D_{k-1}(a_1,\dots,a_{k-1})\cdot a_k.
	\]
	Consequently,
	\begin{align}
		&D_{k-1}(a_k,\dots,a_2)\cdot a_1
		+
		a_k\cdot D_{k-1}(a_{k-1},\dots,a_1)
		\notag\\
		&\qquad=
		(-1)^{k-1}
		\bigl(
		D_{k-1}(a_1,\dots,a_{k-1})\cdot a_k
		+
		a_1\cdot D_{k-1}(a_2,\dots,a_k)
		\bigr).
		\label{eq:odd-boundary-reflection}
	\end{align}
	
	Next consider a summand in the quadratic term. After reversing the
	inputs, reindex the sum by replacing $i$ with $k-i$. The sign
	coefficient is unchanged, since
	\[
	(k-i)i=i(k-i).
	\]
	Using \eqref{eq:D-reflection} for the two factors, we obtain
	\begin{align*}
		&D_{k-i}(a_k,\dots,a_{i+1})
		\cdot
		D_i(a_i,\dots,a_1)
		\\
		&\qquad=
		(-1)^{k-i-1}(-1)^{i-1}
		D_{k-i}(a_{i+1},\dots,a_k)
		\cdot
		D_i(a_1,\dots,a_i)
		\\
		&\qquad=
		(-1)^{k-2}
		D_{k-i}(a_{i+1},\dots,a_k)
		\cdot
		D_i(a_1,\dots,a_i).
	\end{align*}
	The two displayed factors are odd, so interchanging them contributes
	one additional minus sign. Therefore
	\begin{align}
		&D_{k-i}(a_k,\dots,a_{i+1})
		\cdot
		D_i(a_i,\dots,a_1)
		\notag\\
		&\qquad=
		(-1)^{k-1}
		D_i(a_1,\dots,a_i)
		\cdot
		D_{k-i}(a_{i+1},\dots,a_k).
		\label{eq:odd-quadratic-reflection}
	\end{align}
	
	Equations \eqref{eq:odd-boundary-reflection} and
	\eqref{eq:odd-quadratic-reflection}, together with the recursion
	\eqref{eq:odd-recursion}, show that
	\[
	\widehat m_k(a_k,\dots,a_1)
	=
	(-1)^{k-1}
	\widehat m_k(a_1,\dots,a_k).
	\]
	Since $((-1)^{k-1})^2=1$, this is equivalent to
	\eqref{eq:odd-reflection}. The induction is complete.
	
	Finally, applying \eqref{eq:odd-reflection} to
	\[
	(a_1,\dots,a_k)=(x,\dots,x,y)
	\]
	gives \eqref{eq:symmetry1}, while applying it to
	\[
	(a_1,\dots,a_k)=(x,y,\dots,y,z)
	\]
	gives \eqref{eq:symmetry2}.
\end{proof}

\begin{lemma}[\bfseries Generalized Even Symmetry for $k=3$]\label{lem:evensymmetry3}
	Assume that $\Aa^*$ is $(r-1)$-connected and that $r$ is even. Then, for every $y\in\Hh^r$ and every homogeneous $x,z\in\Hh^*$,
	\begin{align}
		\widehat m_3(x,x,y)
		&=
		-(-1)^{|x|}\widehat m_3(y,x,x),
		\label{eq:symmetry1e}\\
		\widehat m_3(x,y,z)
		&=
		-(-1)^{|x||z|}\widehat m_3(z,y,x).
		\label{eq:symmetry2e}
	\end{align}
	In particular, for every $x,y\in\Hh^r$,
	\begin{equation}\label{eq:xyx-vanish}
		\widehat m_3(x,y,x)=0.
	\end{equation}
\end{lemma}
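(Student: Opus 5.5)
The plan is a direct computation from the explicit arity-three formula in Remark \ref{rem:vanishmkr}, with no induction needed. Specializing the recursion \eqref{eq:hat-mk} to $k=3$ gives, for homogeneous $a,b,c\in\Hh^*$,
\[
\widehat m_3(a,b,c)=d^-(a\cdot b)\cdot c-(-1)^{|a|}\,a\cdot d^-(b\cdot c).
\]
Three facts will be used repeatedly. First, $y\in\Hh^r$ has even degree, so $y$ commutes with everything. Second, $d^-$ has degree $-1$, so $|d^-(a\cdot b)|\equiv|a|+|b|+1 \pmod 2$. Third, graded commutativity in $\Aa^*$ lets me move factors past each other.

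For \eqref{eq:symmetry2e}, I expand $\widehat m_3(z,y,x)=d^-(z\cdot y)\cdot x-(-1)^{|z|}z\cdot d^-(y\cdot x)$ and rewrite each term.
\begin{itemize}
\item Since $y$ is even, $z\cdot y=y\cdot z$. Hence $d^-(z\cdot y)\cdot x=(-1)^{|x|(|z|+1)}\,x\cdot d^-(y\cdot z)$, because $d^-(y\cdot z)$ has parity $|z|+1$.
\item Similarly $y\cdot x=x\cdot y$, so $z\cdot d^-(y\cdot x)=(-1)^{|z|(|x|+1)}\,d^-(x\cdot y)\cdot z$.
\end{itemize}
Collecting the signs gives
\[
\widehat m_3(z,y,x)=(-1)^{|x||z|}\bigl((-1)^{|x|}x\cdot d^-(y\cdot z)-d^-(x\cdot y)\cdot z\bigr)=-(-1)^{|x||z|}\widehat m_3(x,y,z).
\]
This is \eqref{eq:symmetry2e}.

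For \eqref{eq:symmetry1e}, I compare $\widehat m_3(x,x,y)=d^-(x^2)\cdot y-(-1)^{|x|}x\cdot d^-(x\cdot y)$ with $\widehat m_3(y,x,x)=d^-(y\cdot x)\cdot x-y\cdot d^-(x^2)$.
\begin{itemize}
\item Using $y\cdot x=x\cdot y$ and the parity $|x|+1$ of $d^-(x\cdot y)$, I get $d^-(x\cdot y)\cdot x=(-1)^{|x|(|x|+1)}x\cdot d^-(x\cdot y)=x\cdot d^-(x\cdot y)$.
\item Since $y$ is even, $y\cdot d^-(x^2)=d^-(x^2)\cdot y$.
\end{itemize}
So $\widehat m_3(y,x,x)=x\cdot d^-(x\cdot y)-d^-(x^2)\cdot y$, while $-(-1)^{|x|}\widehat m_3(x,x,y)=x\cdot d^-(x\cdot y)-(-1)^{|x|}d^-(x^2)\cdot y$.

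These agree when $|x|$ is even. When $|x|$ is odd, graded commutativity gives $x^2=-x^2$, hence $x^2=0$ because $\operatorname{char}\F=0$, and both sides again agree. This case split on the parity of $|x|$ is the only point needing care. The main obstacle is simply sign bookkeeping, so I would double-check each Koszul sign, in particular the convention $-(-1)^{k|\alpha_1|}$ evaluated at $k=3$.

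Finally, \eqref{eq:xyx-vanish} follows by taking $z=x\in\Hh^r$ in \eqref{eq:symmetry2e}. Since $|x|=r$ is even, the sign $(-1)^{|x||z|}$ equals $1$, so $\widehat m_3(x,y,x)=-\widehat m_3(x,y,x)$. Characteristic zero then gives $\widehat m_3(x,y,x)=0$.
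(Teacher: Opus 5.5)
Your proof is correct and takes the same route as the paper: expand the arity-three case of \eqref{eq:hat-mk}, move factors using graded commutativity and the evenness of $|y|=r$, use $x^2=0$ when $|x|$ is odd, and then specialize \eqref{eq:symmetry2e} to $z=x\in\Hh^r$. Your signs check out, and proving \eqref{eq:symmetry1e} in the inverted form $\widehat m_3(y,x,x)=-(-1)^{|x|}\widehat m_3(x,x,y)$ is harmless, because multiplication by $-(-1)^{|x|}$ is an involution.
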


\begin{proof}
	The identities follow directly from \eqref{eq:hat-mk}, graded commutativity, and the fact that $|y|=r$ is even  (using also that $x^2=0$ whenever $|x|$ is odd). Taking $z=x\in\Hh^r$ in \eqref{eq:symmetry2e} gives
	\[
	\widehat m_3(x,y,x)=-\widehat m_3(x,y,x),
	\]
	and hence \eqref{eq:xyx-vanish}.
\end{proof}

\begin{lemma}[\bfseries Generalized  Even Symmetry for $k=4$]\label{lem:evensymmetry4}       Assume that   $\Aa^*$ is $(r-1)$-connected, with $r$ even. Then for every  $x, y,z \in \Hh ^r$ we have
	\begin{align}  
		\widehat m_4  (x, x, x, y)  &=   -  \widehat m_4  (y, x, x, x) \label{eq:symmetry1e4} \\
		\widehat  m_4  (x, y, y, z) &  =  - \widehat m_4  (z, y, y, x)  \label{eq:symmetry2e4}
	\end{align}	
\end{lemma}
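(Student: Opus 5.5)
The plan is to prove the general reflection identity
\[
\widehat m_4(a_1,a_2,a_3,a_4)=-\widehat m_4(a_4,a_3,a_2,a_1)\qquad\text{for all }a_1,\dots,a_4\in\Hh^r,
\]
as in the proof of Lemma \ref{lem:oddsymmetry}. Then \eqref{eq:symmetry1e4} is its specialization to $(x,x,x,y)$, and \eqref{eq:symmetry2e4} is its specialization to $(x,y,y,z)$. As there, set $D_j(a_1,\dots,a_j)\coloneqq d^-\widehat m_j(a_1,\dots,a_j)$.

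First I specialize the recursion \eqref{eq:hat-mk} to $k=4$ with all inputs in $\Hh^r$ and $r$ even. The sum has the single term $i=2$, whose sign exponent is $\nu=2+(|a_1|+|a_2|)\equiv 0 \pmod 2$. The second coefficient is $-(-1)^{4|a_1|}=-1$. Hence
\[
\widehat m_4(a_1,\dots,a_4)=-D_3(a_1,a_2,a_3)\cdot a_4-a_1\cdot D_3(a_2,a_3,a_4)-D_2(a_1,a_2)\cdot D_2(a_3,a_4).
\]
Next I record the parities. The $a_s$ are even. $\widehat m_3$ of three such inputs has degree $3r-1$, which is odd, so $D_3$ has even degree $3r-2$. $D_2$ has degree $2r-1$, which is odd. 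Thus $D_3$ commutes with each $a_s$, while two values of $D_2$ anticommute.

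I then need the lower-arity reflections. Since $a_1a_2=a_2a_1$ for even degree, $D_2(a_1,a_2)=D_2(a_2,a_1)$. Lemma \ref{lem:evensymmetry3}, \eqref{eq:symmetry2e}, gives $\widehat m_3(x,y,z)=-\widehat m_3(z,y,x)$, because $|x||z|$ is even; applying $d^-$ yields $D_3(a_1,a_2,a_3)=-D_3(a_3,a_2,a_1)$.

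Finally I substitute the reversed sequence into the specialized recursion:
\[
\widehat m_4(a_4,a_3,a_2,a_1)=-D_3(a_4,a_3,a_2)a_1-a_4D_3(a_3,a_2,a_1)-D_2(a_4,a_3)D_2(a_2,a_1).
\]
Using the reflections of $D_3$ and the commutation of $D_3$ with the $a_s$, the first two terms become $a_1D_3(a_2,a_3,a_4)+D_3(a_1,a_2,a_3)a_4$. Using the symmetry of $D_2$ and the anticommutation of odd elements, the last term becomes $+D_2(a_1,a_2)D_2(a_3,a_4)$. Altogether this is $-\widehat m_4(a_1,\dots,a_4)$, which is the reflection identity.

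The only real obstacle is sign bookkeeping. One must check that $\nu$ and the coefficient $-(-1)^{4|a_1|}$ are correctly evaluated for even $r$. One must also check that the single odd–odd swap in the quadratic term supplies exactly the sign needed to match the two boundary terms; each of those acquires one sign from the $D_3$-reflection and none from commuting.
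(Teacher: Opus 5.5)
Your proof is correct, and the signs check out: at $k=4$ one has $\nu=2+2r$ and $-(-1)^{4|a_1|}=-1$, the value of $D_3$ has even degree $3r-2$, and the value of $D_2$ has odd degree $2r-1$. Also, \eqref{eq:symmetry2e} with $|a_1||a_3|=r^2$ even gives $D_3(a_1,a_2,a_3)=-D_3(a_3,a_2,a_1)$.

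The paper argues more narrowly. It expands the recursion only at the specific inputs $(x,x,x,y)$ and $(y,x,x,x)$. It drops the terms containing $d^-\widehat m_3(x,x,x)$ using the Fiorenza--L\^e vanishing \eqref{eq:vanishmkr}. It then converts $d^-\widehat m_3(y,x,x)$ into $-d^-\widehat m_3(x,x,y)$ via \eqref{eq:symmetry1e}, and matches the remaining terms by the same parity facts you use. The second identity \eqref{eq:symmetry2e4} is only declared ``analogous.''

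You instead prove the full reflection identity $\widehat m_4(a_1,\dots,a_4)=-\widehat m_4(a_4,\dots,a_1)$ on $(\Hh^r)^{\otimes 4}$. This is the even-$r$ counterpart of \eqref{eq:odd-reflection} at $k=4$, so the arity-$4$ reflection sign is $-1$ for both parities of $r$. Your route has several advantages:
\begin{itemize}
\item It needs no input from \eqref{eq:vanishmkr}, and it in fact reproves $\widehat m_4(x,x,x,x)=0$ for even $r$.
\item It gives both identities in one stroke.
\item It is strictly stronger than the two displayed identities. For example, it yields $\widehat m_4(a,a,b,b)=-\widehat m_4(b,b,a,a)$, which does not follow from \eqref{eq:symmetry1e4} and \eqref{eq:symmetry2e4} by polarization alone. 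A multilinear map of the form $\tfrac12 f(a_1,a_4)\,\omega(a_2,a_3)$, with $f,\omega$ both antisymmetric, satisfies those two identities but not the reflection.
\end{itemize}
That extra strength could matter in the $b^r\geq 3$ situation flagged in the paper's closing remark of Section \ref{sec:cavalcantiex}. The paper's computation is marginally shorter for \eqref{eq:symmetry1e4} alone, but yours is the more uniform and reusable argument.
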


\begin{proof} 
	By \eqref{eq:hat-mk} and by the vanishing condition \eqref{eq:vanishmkr}, we have
	\begin{align*}
		\widehat m_4 (x, x, x, y) &= - d^{-}\widehat m_{3}(x,x, x)\cdot y - x \cdot d^{-} \widehat m_{3} (x, x, y)\\
		&\qquad\qquad -  d^-\widehat m_2 (x, x)\cdot d^{-}\widehat m_{2} (x, y)\\
		&= 
		- x \cdot d^{-} \widehat m_{3} (x, x, y) -  d^-\widehat m_2 (x, x)\cdot d^{-}\widehat m_{2} (x, y).
	\end{align*}
	Similarly, and taking into account \eqref{eq:symmetry1e},
	\begin{align*}
		\widehat m_4 (y, x, x, x) &= -d^{-}\widehat m_{3}(y, x, x)\cdot x - y \cdot d^{-} \widehat m_{3} (x, x, x)\\
		&\qquad\qquad - d^-\widehat m_2 (y, x)\cdot d^{-}\widehat m_{2} (x, x)\\
		&= -d^{-}\widehat m_{3}(y, x, x)\cdot x - d^-\widehat m_2 (y, x)\cdot d^{-}\widehat m_{2} (x, x)\\
		&= d^{-}\widehat m_{3}(x, x,y)\cdot x + d^-\widehat m_2 (x, x)\cdot d^{-}\widehat m_{2} (y, x).
	\end{align*}
	This proves \eqref{eq:symmetry1e4}. The proof of  \eqref{eq:symmetry2e4} is analogous.
\end{proof}

\begin{remark}
	The canonical isomorphism of graded vector spaces
	\[
	\Hh^*\xrightarrow{\sim}H^*(\Aa),\qquad
	\alpha\longmapsto[\alpha],
	\]
	transports the minimal unital cyclic $C_\infty$-algebra structure on the harmonic subspace $\Hh^*$ to a minimal unital cyclic $C_\infty$-algebra structure on $H^*(\Aa)$. By abuse of notation, we denote the transported operations again by $m_k$. Under this identification, $m_2$ is the ordinary multiplication in cohomology.
\end{remark}

	\section{Classification of rational and real homotopy types of closed simply-connected smooth manifolds}\label{sec:class}
	
	In this section, we describe minimal $C_\infty$-algebra enhancements and their isotopies using the completed Harrison differential graded Lie algebra. For $k \in \mathbb{N}^+$, we introduce the concept of \emph{isotopy modulo $k$} (Definition \ref{def:equik}, Corollary \ref{cor:filtered-isotopy}) for minimal $C_\infty$-algebra enhancements of a finite-dimensional GCA $H^*$ over a field $\mathbb{F}$ of characteristic zero. (In this section, we   always  assume that $H^*$ is finite dimensional, unless otherwise stated.) We show that, for any $k\in \mathbb{N}^+$, isotopy modulo $k$ defines an equivalence relation on the set $C_{\infty} (H^*)$ of all minimal $C_\infty$-algebra enhancements $(H^*, m_1=0, m_2, m_3, \ldots)$ of $H^*$ whose multiplication is $m_2$ (Proposition \ref{prop:equik}).  Denoting such a minimal $C_\infty$-algebra enhancement as $m = (m_3, m_4, \ldots)$, we show that two minimal $C_\infty$-algebra enhancements are $C_\infty$-isotopic if and only if they are isotopic modulo $k$ for all $k \in \mathbb{N}^+$ (Theorem \ref{thm:iso_from_equiv}). We show that if $H^*$ is a PGCA, the primary obstruction for isotopy modulo 3 of a minimal unital cyclic $C_\infty$-algebra enhancement $m$ of $H^*$ is the cyclic cohomology class $[Tm_3]\in HC_{\mathrm{Harr}}^{4,-1}(H^*)$ (Proposition \ref{prop:equiv3}). Then we describe the structure of a complete set of invariants of the set $C_\infty(H^*)$ modulo isotopy (Theorem \ref{thm:additivek}, Proposition \ref{prop:obstruction-additivity}). We prove that the real and rational homotopy types of a closed $(r-1)$-connected manifold of dimension $n\leq \ell(r-1)+2$ (where $r\geq2$ and $\ell\geq4$) are determined uniquely by the cohomology algebra $H^*(M;\F)$ and the isotopy class modulo $(\ell-2)$ of the corresponding minimal unital cyclic $C_\infty$-algebra enhancement, for $\F=\R$ and $\F=\Q$, respectively (Theorem \ref{thm:finite}). Finally, we illustrate the secondary obstruction on the simply connected non-formal symplectic $8$-manifold of Fern\'andez and Mu\~noz in Subsection \ref{subs:example}.

\subsection{$C_\infty$-isotopies, filtered differential graded Lie algebras,  isotopies modulo $k$, and  cyclic  Harrison cohomology}
\label{subs:twisting}

\subsubsection{$C_\infty$-isotopies and  filtered DGLAs}\label{ss:fdgla}

	Let us recall   that   a  $C_\infty$-morphism  $\phi = (\phi_1, \phi_2, \phi_3, \ldots)$   from  a minimal $ C_\infty$-algebra  $A =   (H^*, m_1 =0, m_2,\ldots)$  to  a minimal $C_\infty$-algebra $A' =  (H^*, m_1' =0, m_2 ', \ldots)$   must satisfy  \cite[Proposition 10.2.6, p. 374]{LV}: 
\begin{align}
	&\sum_{k =1}^n \sum_{r_1+ \dots + r_k = n} (-1)^\eta m_k' (\phi_{r_1} (a_1, \dots, a_{r_1}), \dots, \phi_{r_k}(a_{n-r_k +1}, \dots, a_n)) \nonumber\\
&\quad= \sum_{k =1}^n \sum_{\lambda=0}^{n-k} (-1)^\xi \phi_{n-k+1} (a_1, \dots, a_\lambda, m_k (a_{\lambda+1}, \dots, a_{\lambda+k}), a_{\lambda+k +1}, \dots, a_n )\label{eq:amorphism}
\end{align} 

	 for   all $n \ge 2$  and
	\[\eta = \sum_{1\le \alpha < \beta \le k} (|a_{{r_1} + \ldots + r_{\alpha -1} +1}| + \ldots + |a_{{r_1} + \ldots + {r_\alpha}}|+ r_\alpha)(1+r_\beta), \]
	\[\xi = k + k\lambda + k ( |a_1| +\ldots + |a_\lambda|) +n. \]
	
We  call  $A$ and $A'$ \emph{$C_\infty$-isotopic}  if  $\phi = (\phi_1 = \Id, \phi_2, \phi_3, \ldots)$, i.e., $\phi$ is   a \emph{$C_\infty$-isotopy}.

   \noindent
 We now describe minimal $C_\infty$-algebra enhancements and their
 $C_\infty$-isotopies using the completed Harrison differential graded Lie algebra.
 This is equivalent to the standard description in terms of coderivations
 of the cofree conilpotent coalgebra governing $C_\infty$-algebras;
 see, for example, \cite{LV,HL2009}.
 
 Let
 \[
 \mathfrak g_{H^*}
 \coloneqq
 \prod_{n\geq2}C_{\mathrm{Harr}}^{n,*}(H^*,H^*)
 \]
 and assign to
 $f\in C_{\mathrm{Harr}}^{n,j}(H^*,H^*)$ the total degree
 \[
 |f|_{\mathfrak g}\coloneqq n+j-1.
 \]
 The Gerstenhaber bracket is
 \begin{equation}\label{eq:gbracket}
 	[f,g]
 	\coloneqq
 	f\circ g-
 	(-1)^{|f|_{\mathfrak g}|g|_{\mathfrak g}}g\circ f.
 \end{equation}
 The multiplication $m_2$ determines the differential
 \[
 \delta f\coloneqq[m_2,f].
 \]
 Since $[m_2,m_2]=0$, we have $\delta^2=0$, and hence
 \[
 \mathfrak g_{H^*}
 =
 \left(
 \prod_{n\geq2}C_{\mathrm{Harr}}^{n,*}(H^*,H^*),
 \delta,[-,-]
 \right)
 \]
 is a differential graded Lie algebra.
 
 Let
 \[
 m=m_3+m_4+\cdots,
 \qquad
 m_n\in C_{\mathrm{Harr}}^{n,2-n}(H^*,H^*).
 \]
 Thus $|m_n|_{\mathfrak g}=1$. The operations
 $(m_1=0,m_2,m_3,\ldots)$ define a minimal $C_\infty$-algebra
 enhancement of $H^*$ if and only if the total coderivation $m_2+m$
 squares to zero. Equivalently, $m$ satisfies the Maurer--Cartan equation
 \begin{equation}\label{eq:GMC}
 	\delta m+\frac12[m,m]=0.
 \end{equation}
 This is the differential graded Lie algebra formulation of
 Kadeishvili's twisting-element description
 \cite{Kadeishvili1988,Kadeishvili2011}.
 
 We use the shifted arity filtration
 \begin{equation}\label{eq:HL}
 	\mathcal F^p\mathfrak g_{H^*}
 	\coloneqq
 	\prod_{n\geq p+1}C_{\mathrm{Harr}}^{n,*}(H^*,H^*),
 	\qquad p\geq1.
 \end{equation}
 Then
 \[
 \delta\bigl(\mathcal F^p\mathfrak g_{H^*}\bigr)
 \subseteq
 \mathcal F^{p+1}\mathfrak g_{H^*}
 \subseteq
 \mathcal F^p\mathfrak g_{H^*},
 \]
 \[
 [\mathcal F^p\mathfrak g_{H^*},
 \mathcal F^q\mathfrak g_{H^*}]
 \subseteq
 \mathcal F^{p+q}\mathfrak g_{H^*},
 \]
 and
 \[
 \mathfrak g_{H^*}
 \cong
 \varprojlim_p
 \mathfrak g_{H^*}/\mathcal F^p\mathfrak g_{H^*}.
 \]
 Thus $\mathfrak g_{H^*}$ is complete and pronilpotent with respect
 to the shifted arity filtration.
 
 The associated prounipotent gauge group is
 \[
 \exp\bigl(\mathcal F^1\mathfrak g_{H^*}^0\bigr).
 \]
 For $p\in\mathcal F^1\mathfrak g_{H^*}^0$, its action on
 Maurer--Cartan elements is
 \begin{equation}\label{eq:gaugeaction}
 	e^{\operatorname{ad}_p}*m
 	\coloneqq
 	e^{\operatorname{ad}_p}(m)
 	-
 	\frac{e^{\operatorname{ad}_p}-1}
 	{\operatorname{ad}_p}
 	(\delta p),
 \end{equation}
 where
 \[
 \frac{e^{\operatorname{ad}_p}-1}
 {\operatorname{ad}_p}
 =
 \sum_{j\geq0}
 \frac{(\operatorname{ad}_p)^j}{(j+1)!}.
 \]
 This is the standard gauge action on Maurer--Cartan elements of a
 complete pronilpotent DGLA; see, for example,
 \cite[Lemma 2.8]{GM1988}.
 
 \begin{remark}\label{rem:log}
 	Under the completed coderivation description, a $C_\infty$-isotopy
 	\[
 	\Phi=(\Phi_1=\Id,\Phi_2,\Phi_3,\ldots)
 	\]
 	is represented by a prounipotent pointed formal coalgebra automorphism.
 	Since the ground field has characteristic zero, it has a unique
 	logarithm
 	\[
 	p=\log\Phi\in\mathcal F^1\mathfrak g_{H^*}^0,
 	\qquad
 	\Phi=\exp(p).
 	\]
 	If the transformed total coderivation is defined by
 	\[
 	m_2+m'
 	=
 	\Phi(m_2+m)\Phi^{-1},
 	\]
 	then
 	\[
 	m'=e^{\operatorname{ad}_p}*m.
 	\]
 	
 	Consequently, two
 	minimal $C_\infty$-algebra enhancements of $H^*$ are $C_\infty$-isotopic if and
 	only if their corresponding Maurer--Cartan elements are gauge
 	equivalent in the complete DGLA $\mathfrak g_{H^*}$.
 \end{remark}

 \begin{lemma}[Affine formula for the gauge action]\label{lem:distMC}
 	Let $\mathfrak g$ be a complete filtered DGLA, let
 	$q\in\mathfrak g^0$ have positive filtration, and let
 	$x,\tau\in\mathfrak g^1$. Then
 	\begin{equation}\label{eq:distMC}
 		e^{\operatorname{ad}_q}*(x+\tau)
 		=
 		\bigl(e^{\operatorname{ad}_q}*x\bigr)
 		+
 		e^{\operatorname{ad}_q}(\tau).
 	\end{equation}
 \end{lemma}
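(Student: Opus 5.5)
Expand the definition \eqref{eq:gaugeaction} and use linearity. By definition,
\[
e^{\operatorname{ad}_q}*(x+\tau)
=
e^{\operatorname{ad}_q}(x+\tau)
-
\frac{e^{\operatorname{ad}_q}-1}{\operatorname{ad}_q}(\delta q).
\]
The operator $e^{\operatorname{ad}_q}=\sum_{j\ge0}\frac{1}{j!}(\operatorname{ad}_q)^j$ is a well-defined linear endomorphism of $\mathfrak g^1$. The series converges because $q$ has positive filtration. Indeed, $[\mathcal F^p,\mathcal F^{p'}]\subseteq\mathcal F^{p+p'}$, so $(\operatorname{ad}_q)^j$ raises filtration by at least $j$, and $\mathfrak g\cong\varprojlim\mathfrak g/\mathcal F^p$ is complete. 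Each partial sum is linear, and linearity passes to the limit in the complete (Hausdorff) filtered space. Hence
\[
e^{\operatorname{ad}_q}(x+\tau)=e^{\operatorname{ad}_q}(x)+e^{\operatorname{ad}_q}(\tau).
\]

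The inhomogeneous term $\frac{e^{\operatorname{ad}_q}-1}{\operatorname{ad}_q}(\delta q)$ depends only on $q$, not on the Maurer--Cartan element it acts on. So it appears exactly once in the expansion. Grouping it with $e^{\operatorname{ad}_q}(x)$ gives $e^{\operatorname{ad}_q}*x$, which is again given by formula \eqref{eq:gaugeaction}, now applied to $x$. The remaining term is $e^{\operatorname{ad}_q}(\tau)$, and this yields \eqref{eq:distMC}.

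We do not need $x$, $x+\tau$ or $\tau$ to be Maurer--Cartan elements. The formula \eqref{eq:gaugeaction} makes sense for arbitrary elements of $\mathfrak g^1$, and the identity is purely affine. So the statement should be read with $*$ denoting the affine map defined by \eqref{eq:gaugeaction} on all of $\mathfrak g^1$.

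The only point needing care is convergence of the two operator series in the completed DGLA. The series $\frac{e^{\operatorname{ad}_q}-1}{\operatorname{ad}_q}(\delta q)$ converges for the same filtration reason, since $\delta q\in\mathfrak g^1$ and each further bracket with $q$ raises the filtration. Linearity of the limit then gives the result. There is no real obstacle beyond this.
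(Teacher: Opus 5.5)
Your proposal is correct and follows the paper's proof: expand \eqref{eq:gaugeaction}, use linearity of $e^{\operatorname{ad}_q}$, and regroup the inhomogeneous term $\frac{e^{\operatorname{ad}_q}-1}{\operatorname{ad}_q}(\delta q)$, which depends only on $q$, with $e^{\operatorname{ad}_q}(x)$ to obtain $e^{\operatorname{ad}_q}*x$. Your remarks on convergence in the completed filtration, and on the identity holding for arbitrary $x,\tau\in\mathfrak g^1$ rather than only Maurer--Cartan elements, are accurate; the paper leaves both implicit.
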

 
 \begin{proof}
 	By \eqref{eq:gaugeaction},
 	\begin{align*}
 		e^{\operatorname{ad}_q}*(x+\tau)
 		&=
 		e^{\operatorname{ad}_q}(x+\tau)
 		-
 		\frac{e^{\operatorname{ad}_q}-1}
 		{\operatorname{ad}_q}
 		(\delta q)\\
 		&=
 		\left(
 		e^{\operatorname{ad}_q}(x)
 		-
 		\frac{e^{\operatorname{ad}_q}-1}
 		{\operatorname{ad}_q}
 		(\delta q)
 		\right)
 		+
 		e^{\operatorname{ad}_q}(\tau)\\
 		&=
 		\bigl(e^{\operatorname{ad}_q}*x\bigr)
 		+
 		e^{\operatorname{ad}_q}(\tau).
 	\end{align*}
 \end{proof}
 \subsubsection{Isotopies modulo $k$}
 
 \begin{definition}[Equality modulo $k$]\label{def:equamodk}
 	Let
 	\[
 	A_1=(H^*,\{m_i^{(1)}\}),
 	\qquad
 	A_2=(H^*,\{m_i^{(2)}\})
 	\]
 	be two minimal $C_\infty$-algebra enhancements of a  GCA
 	$H^*$. We say that $A_1$ and $A_2$ are \emph{equal modulo $k$},
 	and write
 	\[
 	A_1=A_2\pmod{k},
 	\]
 	if
 	\[
 	m_i^{(1)}=m_i^{(2)}
 	\qquad
 	\text{for every }2\leq i\leq k.
 	\]
 \end{definition}
 
 \begin{definition}[Isotopy modulo $k$]\label{def:equik}
 	Let $A_1$ and $A_2$ be minimal $C_\infty$-algebra enhancements of
 	$H^*$. We say that $A_1$ is \emph{isotopic to $A_2$ modulo $k$},
 	and write
 	\[
 	A_1\sim_k A_2,
 	\]
 	if there exists a $C_\infty$-isotopy
 	\[
 	\phi=(\phi_1=\Id,\phi_2,\phi_3,\ldots)
 	\]
 	such that
 	\[
 	\phi(A_1)=A_2\pmod{k}.
 	\]
 \end{definition}
 
 \begin{lemma}\label{lem:truncation-equivariance}
 	Let $A=(H^*,\{m_i\})$ and $A'=(H^*,\{m_i'\})$ be two minimal
 	$C_\infty$-algebra enhancements of the same GCA $H^*$. Let
 	\[
 	\Phi=(\Id,\Phi_2,\Phi_3,\ldots)
 	\]
 	be the pointed formal map underlying a $C_\infty$-isotopy, and let
 	$\Phi(A)$ and $\Phi(A')$ denote the transported structures. If
 	\[
 	A=A'\pmod{k},
 	\]
 	then
 	\[
 	\Phi(A)=\Phi(A')\pmod{k}.
 	\]
 \end{lemma}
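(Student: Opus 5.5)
Write $m=m_3+m_4+\cdots$ and $m'=m_3'+m_4'+\cdots$ for the Maurer--Cartan elements in $\mathfrak g_{H^*}$ attached to $A$ and $A'$. By Remark~\ref{rem:log}, $\Phi=\exp(p)$ for a unique $p\in\mathcal F^1\mathfrak g_{H^*}^0$, and the transported structures are $\Phi(A)\leftrightarrow e^{\operatorname{ad}_p}*m$ and $\Phi(A')\leftrightarrow e^{\operatorname{ad}_p}*m'$. Since $m_2=m_2'$ is fixed throughout, the hypothesis $A=A'\pmod k$ means exactly that $\tau\coloneqq m'-m$ has no components of arity $\le k$. Equivalently, $\tau\in\mathcal F^{k}\mathfrak g_{H^*}^1$, i.e.\ $\tau\in\prod_{n\ge k+1}C^{n,*}_{\mathrm{Harr}}$. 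For $k\le 2$ there is nothing to prove beyond $m_2$, so this reformulation covers all cases.

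Next apply Lemma~\ref{lem:distMC} with $x=m$ and $q=p$. This gives
\[
e^{\operatorname{ad}_p}*m'=e^{\operatorname{ad}_p}*(m+\tau)=\bigl(e^{\operatorname{ad}_p}*m\bigr)+e^{\operatorname{ad}_p}(\tau).
\]
Hence the difference of the two transported Maurer--Cartan elements is $e^{\operatorname{ad}_p}(\tau)=\sum_{j\ge0}\frac{1}{j!}(\operatorname{ad}_p)^j\tau$. The bracket is compatible with the filtration, $[\mathcal F^a,\mathcal F^b]\subseteq\mathcal F^{a+b}$, and $p\in\mathcal F^1$. Therefore $(\operatorname{ad}_p)^j\tau\in\mathcal F^{k+j}\subseteq\mathcal F^{k}$ for every $j$. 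Since $\mathcal F^k$ is closed in the complete filtration, the convergent sum also lies in $\mathcal F^k$. So the difference has vanishing components in all arities $\le k$, and $\Phi(A)=\Phi(A')\pmod k$.

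The main point to check is that the arity-$n$ component of $e^{\operatorname{ad}_p}*m$ really is the operation $m_n$ of the transported structure $\Phi(A)$. One must rule out any reshuffling between arities, for instance from the term $\frac{e^{\operatorname{ad}_p}-1}{\operatorname{ad}_p}(\delta p)$. This is guaranteed by Remark~\ref{rem:log}, which identifies $m_2+m'$ with $\Phi(m_2+m)\Phi^{-1}$ componentwise in arity. The $\delta p$ term is common to both elements and cancels in the difference, which is exactly the content of Lemma~\ref{lem:distMC}.

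A direct alternative is induction on $n$ in the morphism equation \eqref{eq:amorphism} with $\phi_1=\Id$. That equation expresses $\Phi(A)_n$ in terms of $m_j$ for $j\le n$ and $\Phi_i$ only. Hence $\Phi(A)_n$ depends only on $m_2,\dots,m_n$, which gives the lemma immediately.
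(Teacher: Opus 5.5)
Your proof is correct. Your final paragraph is essentially the paper's proof, but your main argument takes a different route.

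The paper argues directly from the triangular shape of the transport equations \eqref{eq:amorphism}. With $\Phi_1=\Id$, the arity-$n$ equation contains the new operation $\Phi(A)_n$ only through the term with all $r_i=1$. Every other term involves $m_2,\dots,m_n$, transported operations of lower arity, and components of $\Phi$ of lower arity. So, inductively, $\Phi(A)_n$ depends only on $m_2,\dots,m_n$ and $\Phi$. Your main argument instead works in the Harrison DGLA. You use Remark~\ref{rem:log} to identify the transported structures with $e^{\operatorname{ad}_p}*m$ and $e^{\operatorname{ad}_p}*m'$, then apply the affineness of the gauge action (Lemma~\ref{lem:distMC}) and the filtration estimate $[\mathcal F^1,\mathcal F^k]\subseteq\mathcal F^{k+1}$.

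The paper's route is more elementary. It needs neither the logarithm $p=\log\Phi$ nor the identification of transport with the gauge action, which the paper only asserts in Remark~\ref{rem:log}. Your route yields a sharper statement. The difference of the transported elements is exactly $e^{\operatorname{ad}_p}(m'-m)\equiv m'-m \pmod{\mathcal F^{k+1}}$. So the two structures still agree through arity $k$, and moreover their first discrepancy, in arity $k+1$, is left unchanged by $\Phi$. This is the same mechanism the paper later uses in Propositions~\ref{prop:extcri} and~\ref{prop:obstruction-additivity}, so your version connects more directly to the subsequent obstruction theory.
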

 
 \begin{proof}
 	For every $n$, the arity-$n$ component of the transported structure
 	$\Phi(A)$ depends only on the operations
 	\[
 	m_2,\ldots,m_n
 	\]
 	and on the components of $\Phi$ of arity at most $n$. Hence, if
 	$m_i=m_i'$ for every $i\leq k$, then the transported operations of
 	$\Phi(A)$ and $\Phi(A')$ agree in every arity at most $k$.
 \end{proof}
 
 \begin{proposition}\label{prop:equik}
 	For every integer $k\geq2$, isotopy modulo $k$ is an equivalence
 	relation on the set of minimal $C_\infty$-algebra enhancements of
 	$H^*$.
 \end{proposition}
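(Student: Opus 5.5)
We verify reflexivity, symmetry and transitivity directly from Definition \ref{def:equik}, using two facts. First, $C_\infty$-isotopies form a group under composition: by Remark \ref{rem:log} they correspond to elements $\exp(p)$ of the prounipotent gauge group $\exp(\mathcal F^1\mathfrak g_{H^*}^0)$. The composite of two pointed formal automorphisms is again one, and the inverse of $\exp(p)$ is $\exp(-p)$. Moreover, transport of structure is a group action, $(\psi\circ\phi)(A)=\psi(\phi(A))$ and $\phi^{-1}(\phi(A))=A$, since $m_2+m'=\Phi(m_2+m)\Phi^{-1}$. Second, we use Lemma \ref{lem:truncation-equivariance}: equality modulo $k$ is preserved by applying the same isotopy to both sides. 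Equality modulo $k$ in the sense of Definition \ref{def:equamodk} is plainly an equivalence relation, since it is equality of the finite tuple $(m_2,\dots,m_k)$.

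\emph{Reflexivity.} The identity isotopy $\phi=(\Id,0,0,\ldots)$ gives $\phi(A)=A$, so $A\sim_k A$.

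\emph{Symmetry.} Suppose $\phi(A_1)=A_2\pmod k$. Apply $\phi^{-1}$ to both structures. By Lemma \ref{lem:truncation-equivariance},
\[
A_1=\phi^{-1}(\phi(A_1))=\phi^{-1}(A_2)\pmod k .
\]
By symmetry of equality modulo $k$, this says $\phi^{-1}(A_2)=A_1\pmod k$, so $A_2\sim_k A_1$.

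\emph{Transitivity.} Suppose $\phi(A_1)=A_2\pmod k$ and $\psi(A_2)=A_3\pmod k$. Apply $\psi$ to the first relation and use Lemma \ref{lem:truncation-equivariance}:
\[
(\psi\circ\phi)(A_1)=\psi(\phi(A_1))=\psi(A_2)=A_3\pmod k .
\]
Since $\psi\circ\phi$ is a $C_\infty$-isotopy, this gives $A_1\sim_k A_3$.

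The only step needing care is the group-action claim. Specifically, one must check that the composite and the inverse of $C_\infty$-isotopies are again $C_\infty$-isotopies (with linear part $\Id$), and that the transported structures are minimal $C_\infty$-enhancements with the same $m_2$. The first point holds because the linear part of a composite is the composite of the linear parts, and formal inverses of pointed maps exist recursively arity by arity. The second holds because $\Phi_1=\Id$ implies $m_2'=m_2$, via the arity-$2$ case of \eqref{eq:amorphism}. Alternatively, both points follow at once from the gauge-group description of Remark \ref{rem:log}.
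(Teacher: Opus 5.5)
Your proof is correct and follows the paper's argument essentially verbatim. Reflexivity comes from the identity isotopy; symmetry from applying $\phi^{-1}$ together with Lemma~\ref{lem:truncation-equivariance}; transitivity from applying $\psi$ to $\phi(A_1)=A_2\pmod{k}$ and using that $\psi\circ\phi$ is again a $C_\infty$-isotopy. Your extra check that isotopies form a group acting by transport of structure and preserving $m_2$ only makes explicit what the paper uses without comment.
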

 
 \begin{proof}
 	Reflexivity follows by taking the identity $C_\infty$-isotopy
 	\[
 	\Id=(\Id,0,0,\ldots).
 	\]
 	
 	Suppose that $A_1\sim_k A_2$. Then there exists a $C_\infty$-isotopy
 	$\phi$ such that
 	\[
 	\phi(A_1)=A_2\pmod{k}.
 	\]
 	The inverse $\phi^{-1}$ is again a $C_\infty$-isotopy. Applying
 	Lemma~\ref{lem:truncation-equivariance} gives
 	\[
 	A_1
 	=
 	\phi^{-1}\bigl(\phi(A_1)\bigr)
 	=
 	\phi^{-1}(A_2)
 	\pmod{k}.
 	\]
 	Hence $A_2\sim_k A_1$.
 	
 	Now suppose that $A_1\sim_k A_2$ and $A_2\sim_k A_3$. Choose
 	$C_\infty$-isotopies $\phi$ and $\psi$ such that
 	\[
 	\phi(A_1)=A_2\pmod{k},
 	\qquad
 	\psi(A_2)=A_3\pmod{k}.
 	\]
 	By Lemma~\ref{lem:truncation-equivariance},
 	\[
 	\psi\bigl(\phi(A_1)\bigr)
 	=
 	\psi(A_2)
 	\pmod{k}.
 	\]
 	It follows that
 	\[
 	(\psi\circ\phi)(A_1)=A_3\pmod{k}.
 	\]
 	Since $\psi\circ\phi$ is a $C_\infty$-isotopy, we conclude that
 	$A_1\sim_k A_3$.
 \end{proof}
 
 \begin{corollary}[Filtered DGLA interpretation]
 	\label{cor:filtered-isotopy}
 	Let
 	\[
 	m=m_3+m_4+\cdots,
 	\qquad
 	m'=m_3'+m_4'+\cdots
 	\]
 	be the Maurer--Cartan elements corresponding to two minimal
 	$C_\infty$-algebra enhancements $A$ and $A'$ of $H^*$. Then
 	\[
 	A=A'\pmod{k}
 	\quad\Longleftrightarrow\quad
 	m-m'\in\mathcal F^k\mathfrak g_{H^*}.
 	\]
 	Moreover,
 	\[
 	A\sim_k A'
 	\]
 	if and only if there exists
 	\[
 	p\in\mathcal F^1\mathfrak g_{H^*}^0
 	\]
 	such that
 	\[
 	e^{\operatorname{ad}_p}*m
 	\equiv
 	m'
 	\pmod{\mathcal F^k\mathfrak g_{H^*}}.
 	\]
 	Equivalently, isotopy modulo $k$ is the orbit relation induced by
 	the prounipotent gauge group on the Maurer--Cartan elements of the
 	nilpotent quotient
 	\[
 	\mathfrak g_{H^*}/\mathcal F^k\mathfrak g_{H^*}.
 	\]
 \end{corollary}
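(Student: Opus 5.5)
The plan is to derive both assertions by translating the definitions through the dictionary of Remark \ref{rem:log} and the filtration \eqref{eq:HL}.

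\textbf{First equivalence.} Since $m$ and $m'$ have no components of arity $2$, the difference
\[
m-m'=\sum_{n\geq3}(m_n-m_n')
\]
has arity-$n$ component $m_n-m_n'$. By \eqref{eq:HL}, $\mathcal F^k\mathfrak g_{H^*}=\prod_{n\geq k+1}C_{\mathrm{Harr}}^{n,*}(H^*,H^*)$. So $m-m'\in\mathcal F^k\mathfrak g_{H^*}$ exactly when $m_n=m_n'$ for all $n\leq k$. Together with the common multiplication $m_2$, this is Definition \ref{def:equamodk}.

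\textbf{Second assertion.} Suppose $A\sim_k A'$, with a $C_\infty$-isotopy $\phi$ such that $\phi(A)=A'\pmod{k}$.
\begin{enumerate}
\item By Remark \ref{rem:log}, $\phi=\exp(p)$ for a unique $p=\log\phi\in\mathcal F^1\mathfrak g_{H^*}^0$.
\item The Maurer--Cartan element of the transported structure $\phi(A)$ is $e^{\operatorname{ad}_p}*m$.
\item By the first equivalence, $e^{\operatorname{ad}_p}*m\equiv m'\pmod{\mathcal F^k\mathfrak g_{H^*}}$.
\end{enumerate}
Conversely, given such a $p$, the map $\exp(p)$ is a pointed prounipotent coalgebra automorphism, i.e.\ a $C_\infty$-isotopy. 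It sends $m$ to $e^{\operatorname{ad}_p}*m$, so the same first equivalence gives $A\sim_k A'$.

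The only point needing care is which sign convention for $\Phi(m_2+m)\Phi^{-1}$ versus the action is used. This is already fixed in Remark \ref{rem:log}, so I take it as given.

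\textbf{Quotient formulation.} Since $\mathcal F^k\mathfrak g_{H^*}$ is a differential Lie ideal, the quotient $\mathfrak g_{H^*}/\mathcal F^k\mathfrak g_{H^*}$ is a nilpotent DGLA. This uses
\[
\delta\mathcal F^k\subseteq\mathcal F^{k+1}\subseteq\mathcal F^k,
\qquad
[\mathcal F^1,\mathcal F^k]\subseteq\mathcal F^{k+1}.
\]
The projection of an MC element is again an MC element. The gauge formula \eqref{eq:gaugeaction} is built from brackets, $\delta$, and convergent series, so it commutes with the projection, and $\mathcal F^1\mathfrak g^0$ acts through its image. Hence $e^{\operatorname{ad}_p}*m\equiv m'$ modulo $\mathcal F^k\mathfrak g_{H^*}$ says exactly that the images of $m$ and $m'$ lie in the same gauge orbit in the quotient.

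The step I expect to require the most checking is this compatibility. Specifically, the expression $e^{\operatorname{ad}_p}*(m+\tau)$ with $\tau\in\mathcal F^k\mathfrak g_{H^*}$ must agree with $e^{\operatorname{ad}_p}*m$ modulo $\mathcal F^k\mathfrak g_{H^*}$. This follows from Lemma \ref{lem:distMC}: the difference is $e^{\operatorname{ad}_p}(\tau)$, and $e^{\operatorname{ad}_p}$ preserves $\mathcal F^k\mathfrak g_{H^*}$ because $p$ has positive filtration.
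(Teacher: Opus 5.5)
Your proposal is correct and follows the paper's proof. The first equivalence is read off from the shifted arity filtration, and the second comes from writing the isotopy as $\Phi=\exp(p)$ and using Remark~\ref{rem:log} to identify $\Phi(A)$ with $e^{\operatorname{ad}_p}*m$. Your extra check that the gauge action descends to $\mathfrak g_{H^*}/\mathcal F^k\mathfrak g_{H^*}$ makes explicit the ``equivalently'' clause, which the paper states without separate argument. That check works because $\mathfrak g_{H^*}=\mathcal F^1\mathfrak g_{H^*}$, so your bracket inclusion does make $\mathcal F^k\mathfrak g_{H^*}$ a dg Lie ideal.
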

 
 \begin{proof}
 	The first assertion follows directly from the definition of the
 	shifted arity filtration. For the second assertion, let $\Phi$ be
 	the pointed formal map underlying a $C_\infty$-isotopy and write
 	$\Phi=\exp(p)$, where $p=\log\Phi$. By
 	Remark~\ref{rem:log}, the transported structure $\Phi(A)$ corresponds
 	to the gauge transform $e^{\operatorname{ad}_p}*m$. Therefore
 	\[
 	\Phi(A)=A'\pmod{k}
 	\]
 	if and only if
 	\[
 	e^{\operatorname{ad}_p}*m-m'
 	\in
 	\mathcal F^k\mathfrak g_{H^*}.
 	\]
 \end{proof}

\subsubsection{Cyclic Harrison cohomology}\label{ss:cyc} 
Let $H^*$ be a Poincar\'e GCA. Denote by
\[
C_{\mathrm{Harr,cyc}}^{k,m}(H^*,H^*)
\subseteq
C_{\mathrm{Harr}}^{k,m}(H^*,H^*)
\]
the subspace of Harrison cochains satisfying the cyclicity condition
\eqref{eq:cyclicsign}. For
$f\in C_{\mathrm{Harr,cyc}}^{k,m}(H^*,H^*)$, define the associated
scalar-valued $(k+1)$-form by
\begin{equation}\label{eq:transform}
	Tf(x_1,\ldots,x_k,x_{k+1})
	\coloneqq
	\langle f(x_1,\ldots,x_k),x_{k+1}\rangle .
\end{equation}
Via the Poincar\'e pairing, this identifies the complex of cyclic
$H^*$-valued Harrison cochains with the cyclic Harrison complex.
We denote its cohomology by
\[
HC_{\mathrm{Harr}}^{*,*}(H^*).
\]
Here the second grading is understood with the degree convention
induced from the corresponding $H^*$-valued cochain; equivalently,
one must insert the degree shift determined by the Poincar\'e
pairing when using the scalar-valued convention.

Hamilton--Lazarev prove that the natural map from cyclic Harrison
cohomology to Harrison cohomology with dual coefficients is an
isomorphism in Harrison arity at least $3$; see
\cite[Theorem 3.8]{HL2008} and
\cite[Corollary 8.12]{HL2009}. Consequently, after identifying
$(H^*)^\vee$ with the appropriate degree shift of $H^*$ by the
Poincar\'e pairing, we obtain
\begin{equation}\label{eq:cyclicharr}
	HC_{\mathrm{Harr}}^{i+1,*}(H^*)
	\cong
	\operatorname{Harr}^{i,*}(H^*,H^*),
	\qquad i\geq3,
\end{equation}
where the second grading on the two sides is related by the pairing
shift just described.

Let
\[
\mathfrak g_{H^*,\mathrm{cyc},\mathrm{unit}}
\subseteq
\mathfrak g_{H^*}
\]
denote the complete filtered sub-DGLA consisting of normalized cyclic
Harrison cochains.

\begin{theorem}\label{thm:iso_from_equiv}
	Let $A_1$ and $A_2$ be two minimal $C_\infty$-algebra enhancements
	of a graded commutative algebra $H^*$ over a field $\mathbb F$ of
	characteristic zero. Assume that
	\[
	\mathcal F^1\mathfrak g_{H^*}^0/
	\mathcal F^k\mathfrak g_{H^*}^0
	\]
	is finite-dimensional for every $k$; in particular, this holds when
	$H^*$ is finite-dimensional.
	
	\begin{enumerate}
		\item[\rm(a)] The following conditions are equivalent:
		\begin{enumerate}
			\item $A_1$ and $A_2$ are $C_\infty$-isotopic as minimal $C_\infty$-algebras;
			\item $A_1$ and $A_2$ are isotopic modulo $k$ for every $k\geq3$.
		\end{enumerate}
		
		\item[\rm(b)] Suppose, moreover, that $H^*$ is a connected PGCA and
		that $A_1,A_2$ are minimal unital cyclic $C_\infty$-algebra
		enhancements. Then the following conditions are equivalent:
		\begin{enumerate}
			\item $A_1$ and $A_2$ are $C_\infty$-isotopic as minimal $C_\infty$-algebras;
			\item $A_1$ and $A_2$ are isotopic as minimal unital cyclic
			$C_\infty$-algebras; equivalently, there exists
			\[
			p\in
			\mathcal F^1
			\mathfrak g_{H^*,\mathrm{cyc},\mathrm{unit}}^0
			\]
			such that
			\[
			m^{(2)}
			=
			e^{\operatorname{ad}_p}*m^{(1)};
			\]
			\item $A_1$ and $A_2$ are isotopic modulo $k$ as minimal unital
			cyclic $C_\infty$-algebras for every $k\geq3$.
		\end{enumerate}
	\end{enumerate}
\end{theorem}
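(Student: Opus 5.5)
The implication (i)$\Rightarrow$(ii) in (a) is immediate from Definition \ref{def:equik}: a $C_\infty$-isotopy $\phi$ with $\phi(A_1)=A_2$ gives $\phi(A_1)=A_2\pmod k$ for every $k$. For the converse, I will use Corollary \ref{cor:filtered-isotopy} and argue by an inverse-limit (compactness) argument. Let $m,m'$ be the Maurer--Cartan elements of $A_1,A_2$. Write $G=\exp(\mathcal F^1\mathfrak g^0_{H^*})$ and $G_k=\exp(\mathcal F^1\mathfrak g^0_{H^*}/\mathcal F^k\mathfrak g^0_{H^*})$. By Lemma \ref{lem:truncation-equivariance}, the gauge action on the truncations $m\bmod \mathcal F^k$ depends only on the image of $p$ in $\mathcal F^1\mathfrak g^0/\mathcal F^{k}\mathfrak g^0$, since arity-$\le k$ components of $e^{\operatorname{ad}_p}*m$ only involve components of $p$ of arity $\le k-1$. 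Define
\[
S_k=\{g\in G_k:\ g*m\equiv m'\pmod{\mathcal F^k\mathfrak g_{H^*}}\}.
\]
By hypothesis each $S_k$ is nonempty. The projections $G_{k+1}\to G_k$ map $S_{k+1}$ into $S_k$, so $(S_k)$ is an inverse system of nonempty sets. An element of $\varprojlim S_k$ is a compatible family, hence, by completeness $\mathfrak g_{H^*}\cong\varprojlim\mathfrak g_{H^*}/\mathcal F^k$, an element $p\in\mathcal F^1\mathfrak g^0$ with $e^{\operatorname{ad}_p}*m=m'$. Remark \ref{rem:log} then gives the isotopy.

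The main obstacle is that an inverse limit of nonempty sets can be empty. This is where finite dimensionality enters. Each $G_k$ is a unipotent algebraic group over $\mathbb F$ (identified with the finite-dimensional vector space $\mathcal F^1\mathfrak g^0/\mathcal F^k\mathfrak g^0$ via $\exp$). The equation defining $S_k$ is polynomial in $p$, so $S_k$ is a coset $g_k\,\mathrm{Stab}_k$, where $\mathrm{Stab}_k$ is the stabilizer in $G_k$ of $m'\bmod\mathcal F^k$. This stabilizer is a Zariski-closed, hence unipotent, algebraic subgroup. I will first try the following Mittag-Leffler argument. The images $\mathrm{im}(S_{j}\to S_k)$ for $j\ge k$ form a decreasing chain of cosets of algebraic subgroups $\mathrm{im}(\mathrm{Stab}_j\to G_k)$. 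In characteristic zero, images of unipotent groups under algebraic homomorphisms are closed subgroups, and these are connected, so the chain of subgroups stabilizes by dimension. The $S_k$ are thus nonempty and satisfy Mittag-Leffler, and the stable images $S_k^\infty$ are nonempty with $S_{k+1}^\infty\to S_k^\infty$ surjective. One can then choose compatible elements inductively. Surjectivity of $S^\infty_{k+1}\to S^\infty_k$ needs a short check: an element of $S_k^\infty$ lifts to $S_j$ for every large $j$, hence lifts to $\mathrm{im}(S_j\to S_{k+1})=S^\infty_{k+1}$.

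For (b), (ii)$\Rightarrow$(i) and (ii)$\Rightarrow$(iii) are trivial. (i)$\Rightarrow$(ii) is the key additional input. I plan to run the same inverse-limit argument inside $\mathfrak g_{H^*,\mathrm{cyc},\mathrm{unit}}$, which is a complete filtered sub-DGLA with finite-dimensional quotients. This yields (iii)$\Rightarrow$(ii) verbatim. For (i)$\Rightarrow$(iii), I would prove inductively on $k$ that a general isotopy mod $k$ between cyclic unital structures can be replaced by a cyclic unital one. Assume $m$ and $m'$ already agree modulo $\mathcal F^{k}$ after a cyclic unital gauge. The difference of the next components is then a $\delta$-cocycle in normalized cyclic Harrison cochains. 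It is a coboundary in the full Harrison complex because (i) holds, and via the isomorphism \eqref{eq:cyclicharr} of Hamilton--Lazarev (together with normalization for connected unital algebras) it is already a coboundary of a normalized cyclic cochain $q$. Gauging by $e^{\operatorname{ad}_q}$, using Lemma \ref{lem:distMC}, kills it. I expect the subtle point to be that (i) only provides isotopy of the whole structures, not of the truncations after modification. I would handle this by noting that gauge equivalence classes under $G$ are preserved by cyclic gauges, so the modified pair stays isotopic and the Harrison-class obstruction is computed in the full complex.
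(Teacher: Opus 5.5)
Your proof of (a), and of (iii)$\Rightarrow$(ii) in (b), follows the paper's argument. The truncated solution sets are torsors under the truncated stabilizers. The images of these unipotent stabilizers are closed connected subgroups, so the images stabilize by dimension. Mittag--Leffler then applies, and completeness assembles a full gauge.

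The gap is in your self-contained proof of (i)$\Rightarrow$(iii) in (b). It lies in the claim that, once $\tilde m$ and $m'$ agree modulo $\mathcal F^k$, the cocycle $\tilde m_{k+1}-m'_{k+1}$ is a Harrison coboundary ``because (i) holds''. Isotopy only gives $0\in\mathcal K_{k+1}(\tilde m,m')$ (Proposition~\ref{prop:extcri}). By Corollary~\ref{cor:obstruction-torsor}, this set is a coset of the stabilizer subgroup $\mathcal K_{k+1}(m',m')$. The class $[\tilde m_{k+1}-m'_{k+1}]$ is just the element of that coset attached to the trivial lower-order gauge, and it can be a nonzero element of $\mathcal K_{k+1}(m',m')$.

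For instance, let $p_2$ be a normalized cyclic Harrison cochain with $\delta p_2=0$, and put $\tilde m=e^{\operatorname{ad}_{p_2}}*m$. Then $\tilde m$ is cyclic, unital, isotopic to $m$, and agrees with $m$ through arity $3$. By \eqref{eq:tk4}, $\tilde m_4-m_4=[p_2,m_3]$, whose class is the bracket of the classes of $p_2$ and $m_3$ in Harrison cohomology and need not vanish.

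Your remark that cyclic gauges preserve $G$-orbits does not address this. To continue the induction you must go back and alter the lower-arity components of the gauge by a self-isotopy of the truncation. The real question is whether the needed self-isotopies, equivalently the relevant elements of $\mathcal K_{k+1}(m',m')$, can be chosen cyclic and unital. The comparison isomorphism \eqref{eq:cyclicharr} handles the first-order coboundary problem but says nothing about stabilizers.

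The paper does not reprove this. It takes (i)$\Leftrightarrow$(ii) directly from Hamilton--Lazarev \cite[Theorems 5.5(ii) and 5.10(ii)]{HL2008}. Their result is that every homotopy class of pointed $C_\infty$-morphisms between cyclic structures on the same Frobenius algebra has a cyclic representative, and in the connected unital case a unital cyclic one. The paper then uses Mittag--Leffler only for (iii)$\Rightarrow$(ii).

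To repair your argument, you have two options. You can cite that theorem for (i)$\Rightarrow$(ii). Alternatively, you can prove that the inclusion $\mathfrak g_{H^*,\mathrm{cyc},\mathrm{unit}}\hookrightarrow\mathfrak g_{H^*}$ induces a bijection on gauge classes of Maurer--Cartan elements. That requires controlling the stabilizers at every stage, not just first-order cohomology classes.
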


\begin{proof}
	(a) The implication $(1)\Rightarrow(2)$ is immediate.
	
	Let
	\[
	\mathcal G_k
	\coloneqq
	\exp\left(
	\mathcal F^1\mathfrak g_{H^*}^0/
	\mathcal F^k\mathfrak g_{H^*}^0
	\right)
	\]
	be the truncated gauge group, and denote by $\bar m_k$ and $\bar m_k'$ the
	images of $m$ and $m'$ in
	$\mathfrak g_{H^*}/\mathcal F^k\mathfrak g_{H^*}$. Set
	\[
	G_k(m,m')
	\coloneqq
	\left\{
	g\in\mathcal G_k:
	g*\bar m_k=\bar m_k'
	\right\}.
	\]
	Then
	\[
	m\sim_km'
	\quad\Longleftrightarrow\quad
	G_k(m,m')\neq\varnothing .
	\]
	The quotient maps of the filtered DGLA induce compatible truncation
	maps
	\[
	\rho_{N,k}:G_N(m,m')\longrightarrow G_k(m,m'),
	\qquad N\geq k.
	\]
	
	The complete gauge group satisfies
	\[
	\exp(\mathcal F^1\mathfrak g_{H^*}^0)
	\cong
	\varprojlim_k\mathcal G_k.
	\]
	Consequently, a full isotopy from $m$ to $m'$ is equivalent to an
	element of
	\[
	\varprojlim_kG_k(m,m').
	\]
	
	For each $k$, let
	\[
	S_k(m')
	\coloneqq
	\left\{
	h\in\mathcal G_k:
	h*\bar m_k'=\bar  m_k'
	\right\}
	\]
	be the truncated stabilizer. Whenever $G_k(m,m')$ is nonempty, it is
	a left torsor under $S_k(m')$: if $g,g'\in G_k(m,m')$, then
	\[
	g'g^{-1}\in S_k(m'),
	\qquad
	g'=(g'g^{-1})g.
	\]
	
	Fix $k$ and, for $N\geq k$, define
	\[
	G_k^{(N)}(m,m')
	\coloneqq
	\operatorname{Im}\bigl(G_N(m,m')\to G_k(m,m')\bigr)
	\]
	and
	\[
	S_k^{(N)}(m')
	\coloneqq
	\operatorname{Im}\bigl(S_N(m')\to S_k(m')\bigr).
	\]
	The truncation homomorphisms
	\[
	S_N(m')\longrightarrow S_k(m'),
	\qquad N\geq k,
	\]
	are morphisms of algebraic groups. Hence their images
	\[
	S_k^{(N)}(m')
	\coloneqq
	\operatorname{Im}\bigl(S_N(m')\to S_k(m')\bigr)
	\]
	are closed algebraic subgroups of $S_k(m')$ by
	\cite[Proposition B(b), Subsection 7.4]{Humphreys1975}.
	They form a descending sequence
	\[
	S_k^{(k)}(m')
	\supseteq
	S_k^{(k+1)}(m')
	\supseteq\cdots .
	\]
	Since $S_k(m')$ is a finite-dimensional unipotent algebraic group
	over a field of characteristic zero, all its algebraic subgroups are
	connected. Therefore, if
	\[
	S_k^{(N+1)}(m')
	\subseteq
	S_k^{(N)}(m')
	\]
	is a proper inclusion, then
	\[
	\dim S_k^{(N+1)}(m')
	<
	\dim S_k^{(N)}(m').
	\]
	The nonnegative integers
	\[
	\dim S_k^{(N)}(m')
	\]
	can decrease only finitely many times. Consequently, the descending
	sequence $S_k^{(N)}(m')$ eventually stabilizes.

	Since $G_N(m,m')$ is a torsor under $S_N(m')$, its image
	$G_k^{(N)}(m,m')$ is a left coset of $S_k^{(N)}(m')$. The sets
	$G_k^{(N)}(m,m')$ form a nested sequence of nonempty cosets. Once the
	corresponding subgroups $S_k^{(N)}(m')$ have stabilized, these nested
	cosets are equal. Therefore, for every fixed $k$, the images
	\[
	\operatorname{Im}\bigl(G_N(m,m')\to G_k(m,m')\bigr)
	\]
	stabilize as $N\to\infty$.
	
	Thus the inverse system $\{G_k(m,m')\}$ satisfies the
	Mittag--Leffler condition. Since each $G_k(m,m')$ is nonempty by
	assumption, the standard inverse-limit lemma for nonempty sets yields
	\[
	\varprojlim_kG_k(m,m')\neq\varnothing.
	\]
	An element of this inverse limit determines a compatible family of
	truncated gauge transformations and hence, by completeness, a full
	$C_\infty$-isotopy from $m$ to $m'$. This proves
	$(2)\Rightarrow(1)$.
	
	(b) Hamilton--Lazarev prove that, for two minimal symplectic
	$C_\infty$-structures with the same underlying Frobenius algebra,
	every homotopy class of pointed $C_\infty$-morphisms has a symplectic
	representative, and that in the connected unital case every
	symplectic homotopy class has a unital symplectic representative;
	see \cite[Theorems 5.5(ii) and 5.10(ii)]{HL2008}. Hence
	$(1)\Leftrightarrow(2)$.
	
	The implication $(2)\Rightarrow(3)$ is immediate. Conversely,
	$\mathfrak g_{H^*,\mathrm{cyc},\mathrm{unit}}$ is a complete filtered
	sub-DGLA, and its finite truncations are finite-dimensional because
	$H^*$ is a PGCA. Applying the Mittag--Leffler argument from part
	{\rm(a)} to this sub-DGLA gives $(3)\Rightarrow(2)$.
\end{proof}

\begin{remark}\label{rem:infinite-dimensional-H}
	The finite-dimensionality of $H^*$ is not needed for most of the
	constructions and results in this subsection. For an arbitrary graded
	commutative algebra $H^*$ over a field of characteristic zero, the
	completed Harrison DGLA
	\[
	\mathfrak g_{H^*}
	=	
	\prod_{q\geq2}
	C_{\mathrm{Harr}}^{q,*}(H^*,H^*)
	\]
	is complete with respect to the shifted arity filtration, and its
	Maurer--Cartan elements describe minimal $C_\infty$-algebra
	enhancements of $H^*$. Likewise, the gauge action describes
	$C_\infty$-isotopies, and the notions of isotopy modulo $k$, the
	corresponding obstruction sets, and their additivity remain valid
	without any finite-dimensionality assumption.
	
	Finite-dimensionality is used in the proof of
	Theorem~\ref{thm:iso_from_equiv}, where one passes from isotopies
	modulo $k$ for every $k$ to a single $C_\infty$-isotopy. More
	precisely, it ensures the required stabilization, or
	Mittag--Leffler property, for the inverse system of finite-stage gauge
	data. Thus Theorem~\ref{thm:iso_from_equiv} remains valid for an
	arbitrary $H^*$ whenever the corresponding inverse system satisfies
	this Mittag--Leffler condition.
\end{remark}

	\subsection{The primary obstruction: isotopy modulo $3$}\label{subs:probs}
	The first step in comparing two minimal $C_\infty$-algebra
	enhancements is to analyze isotopy modulo $3$. Since
	\[
	m_3\in C_{\mathrm{Harr}}^{3,-1}(H^*,H^*)
	\]
	and the arity-$4$ component of the Maurer--Cartan equation gives
	$\delta m_3=0$, the operation $m_3$ determines a Harrison
	cohomology class. The following proposition shows that this class
	completely characterizes isotopy modulo $3$.
	
	\begin{proposition}[Primary obstruction]\label{prop:equiv3}
		Let $H^*$ be a  GCA, and let
		\[
		A_1=(H^*,\{m_i^{(1)}\}),
		\qquad
		A_2=(H^*,\{m_i^{(2)}\})
		\]
		be two minimal $C_\infty$-algebra enhancements of $H^*$. Then
		\begin{equation}\label{eq:equiv3}
			A_1\sim_3 A_2
			\quad\Longleftrightarrow\quad
			[m_3^{(1)}]=[m_3^{(2)}]
			\quad\text{in}\quad
			\operatorname{Harr}^{3,-1}(H^*,H^*).
		\end{equation}
		
		Suppose, moreover, that $H^*$ is a PGCA and that $A_1$ and $A_2$
		are minimal unital cyclic $C_\infty$-algebra enhancements. Under the
		identification between the normalized cyclic Harrison complex and
		the normalized Harrison complex induced by the Poincar\'e pairing,
		\begin{equation}\label{eq:equiv3a}
			A_1\sim_3 A_2
			\quad\Longleftrightarrow\quad
			[Tm_3^{(1)}]=[Tm_3^{(2)}]
			\quad\text{in}\quad
			HC_{\mathrm{Harr}}^{4,-1}(H^*).
		\end{equation}
		Here the second index on the cyclic complex is the degree inherited
		from the corresponding $H^*$-valued Harrison cochain.
	\end{proposition}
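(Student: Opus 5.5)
We will use the filtered DGLA description of Corollary \ref{cor:filtered-isotopy}: $A_1\sim_3A_2$ if and only if there is $p\in\mathcal F^1\mathfrak g_{H^*}^0$ with $e^{\operatorname{ad}_p}*m^{(1)}\equiv m^{(2)}\pmod{\mathcal F^3\mathfrak g_{H^*}}$. Recall that $\mathcal F^3$ consists of arities $\geq4$, so modulo $\mathcal F^3$ only the arity-$3$ component counts ($m_2$ is fixed and sits outside $\mathfrak g_{H^*}$). Write $p=p_2+p_3+\cdots$ with $p_n\in C^{n,1-n}_{\mathrm{Harr}}(H^*,H^*)$, so $|p_n|_{\mathfrak g}=0$.

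The first step is to compute the arity-$3$ component of the gauge action \eqref{eq:gaugeaction}. We have $\operatorname{ad}_p$ raising arity by at least $1$, since $p\in\mathcal F^1$ and $[\mathcal F^1,\mathcal F^q]\subseteq\mathcal F^{q+1}$. Also $m^{(1)}\in\mathcal F^2$. Hence $e^{\operatorname{ad}_p}(m^{(1)})\equiv m^{(1)}_3\pmod{\mathcal F^3}$. Similarly $\delta p\in\mathcal F^2$, with arity-$3$ component $\delta p_2=[m_2,p_2]$. The higher terms $(\operatorname{ad}_p)^j(\delta p)/(j+1)!$ for $j\geq1$ lie in $\mathcal F^3$. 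Therefore
\[
\bigl(e^{\operatorname{ad}_p}*m^{(1)}\bigr)_3=m_3^{(1)}-\delta p_2 .
\]
Consequently $A_1\sim_3A_2$ if and only if $m_3^{(2)}-m_3^{(1)}=-\delta p_2$ for some Harrison $2$-cochain $p_2$. Any such $p_2$ extends, e.g.\ by $p=p_2$, and is automatically a legitimate element of $\mathcal F^1\mathfrak g^0_{H^*}$. Since both $m_3^{(i)}$ are $\delta$-cocycles (arity-$4$ part of \eqref{eq:GMC}, where $[m,m]$ starts in arity $5$), this is exactly $[m_3^{(1)}]=[m_3^{(2)}]$ in $\operatorname{Harr}^{3,-1}$, proving \eqref{eq:equiv3}. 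One small point to check is that the Harrison complex differential agrees with $\delta=[m_2,-]$ up to sign, which is harmless for cohomology.

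For \eqref{eq:equiv3a} we work in the normalized cyclic sub-DGLA $\mathfrak g_{H^*,\mathrm{cyc},\mathrm{unit}}$. By Theorem \ref{thm:iso_from_equiv}(b), the relevant notion is isotopy through cyclic unital gauge elements. Running the same arity computation there shows that $A_1\sim_3A_2$ (cyclic unital sense) if and only if $m_3^{(2)}-m_3^{(1)}=-\delta p_2$ with $p_2$ normalized cyclic. Applying $T$ from \eqref{eq:transform}, which intertwines $\delta$ with the cyclic Harrison differential, this is equivalent to equality of $[Tm_3^{(i)}]$ in $HC^{4,-1}_{\mathrm{Harr}}(H^*)$.

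The main obstacle is reconciling the cyclic and noncyclic notions: a priori $\sim_3$ in the statement may be witnessed only by a non-cyclic $p_2$. We handle this via \eqref{eq:cyclicharr} (Hamilton--Lazarev, arity $\geq3$), which identifies $HC^{4,*}_{\mathrm{Harr}}(H^*)$ with $\operatorname{Harr}^{3,*}(H^*,H^*)$ compatibly with $T$. Injectivity of this map shows that a cyclic cocycle that is a Harrison coboundary is a cyclic coboundary. Combined with \eqref{eq:equiv3}, this yields the equivalence in either reading. The normalization/unit condition can be handled by the standard fact that normalized and unnormalized Harrison complexes are quasi-isomorphic; alternatively, we cite \cite[Theorem 5.10]{HL2008} as in Theorem \ref{thm:iso_from_equiv}(b).
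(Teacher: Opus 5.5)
Your proposal is correct and follows essentially the paper's route. In both, the arity-$3$ component of the transformed structure is $m_3^{(1)}-\delta p_2$; the paper writes this with the $C_\infty$-morphism component $\phi_2$ and uses $(\Id,\phi_2,0,\ldots)$ for the converse, where you use the gauge element $p=p_2$. The cyclic statement is then reduced to \eqref{eq:equiv3} through the Hamilton--Lazarev cyclic--Harrison comparison isomorphism, and your separate treatment of the cyclic-unital reading of $\sim_3$ spells out a point the paper leaves implicit.
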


	\begin{proof}
		The arity-$4$ component of the Maurer--Cartan equation
		\[
		\delta m+\frac12[m,m]=0
		\]
		is
		\[
		\delta m_3=0,
		\]
		because the bracket term has arity at least $5$. Thus
		$m_3^{(1)}$ and $m_3^{(2)}$ define classes in
		$\operatorname{Harr}^{3,-1}(H^*,H^*)$.
		
		Assume first that $A_1\sim_3 A_2$. Then there exists a
		$C_\infty$-isotopy
		\[
		\phi=(\phi_1=\Id,\phi_2,\phi_3,\ldots)
		\]
		such that, for the transported structure
		\[
		A_1''=\phi(A_1),
		\]
		we have
		\[
		m_3^{(1'')}=m_3^{(2)}.
		\]
		The arity-$3$ transformation formula gives
		\begin{equation}\label{eq:phi2m3}
			m_3^{(1'')}
			=
			m_3^{(1)}-\delta\phi_2.
		\end{equation}
		Consequently,
		\[
		m_3^{(1)}-m_3^{(2)}
		=
		\delta\phi_2,
		\]
		and hence
		\[
		[m_3^{(1)}]=[m_3^{(2)}].
		\]
		
		Conversely, assume that
		\[
		[m_3^{(1)}]=[m_3^{(2)}].
		\]
		Then there exists
		\[
		\phi_2\in C_{\mathrm{Harr}}^{2,-1}(H^*,H^*)
		\]
		such that
		\[
		m_3^{(1)}-m_3^{(2)}
		=
		\delta\phi_2.
		\]
		The sequence
		\[
		\phi=(\Id,\phi_2,0,0,\ldots)
		\]
		defines an invertible pointed formal map. Transport the
		$C_\infty$-structure $A_1$ along $\phi$ and denote the resulting
		structure by $A_1''=\phi(A_1)$. Then $\phi$ is a
		$C_\infty$-isotopy from $A_1$ to $A_1''$. Since the binary
		multiplication is unchanged and \eqref{eq:phi2m3} gives
		\[
		m_3^{(1'')}
		=
		m_3^{(1)}-\delta\phi_2
		=
		m_3^{(2)},
		\]
		we obtain
		\[
		\phi(A_1)=A_2\pmod 3.
		\]
		Thus $A_1\sim_3 A_2$, proving \eqref{eq:equiv3}.
		
		In the unital cyclic case, the operations $m_3^{(1)}$ and
		$m_3^{(2)}$ are normalized cyclic Harrison cochains. The
		Poincar\'e pairing identifies their cyclic cohomology classes with
		their corresponding normalized Harrison cohomology classes.
		Therefore \eqref{eq:equiv3a} follows from \eqref{eq:equiv3} and the
		cyclic--Harrison comparison isomorphism.
	\end{proof}

\subsection{Higher obstructions: isotopy modulo $k\geq4$}\label{subs:secondobs}

Let us begin with  the secondary obstruction  for extending isotopy  modulo 3 to isotopy  modulo 4.  
Assume that $m\sim_3m'$. Choose a gauge parameter
\[
p=p_2+p_3+\cdots\in\mathcal F^1\mathfrak g^0
\]
such that
\begin{equation}
e^{\operatorname{ad}_p}*m
\equiv
m'
\pmod{\mathcal F^4}.\label{eq:equiv4}
\end{equation}
Here
\[
p_i\in C_{\mathrm{Harr}}^{i,1-i}(H^*,H^*).
\]
The arity-$3$ component is
\[
m_3'=m_3-\delta p_2.
\]
The arity-$4$ component of the gauge formula is
\begin{equation}\label{eq:tk4}
m_4'
=
m_4+[p_2,m_3]-\delta p_3
-\frac12[p_2,\delta p_2].
\end{equation}
Thus define
\begin{equation}
\widetilde\kappa_4(p_2;m,m')
\coloneqq
m_4-m_4'+[p_2,m_3]
-\frac12[p_2,\delta p_2].\label{eq:tildek4}
\end{equation}

\begin{lemma}\label{lem:2obst}
	The cochain
	\[
	\widetilde\kappa_4(p_2;m,m')
	\in
	C_{\mathrm{Harr}}^{4,-2}(H^*,H^*)
	\]
	is a Harrison cocycle.
\end{lemma}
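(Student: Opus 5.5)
We must show $\delta\widetilde\kappa_4=0$, where $\delta=[m_2,-]$. The plan is to compute $\delta$ of each summand of \eqref{eq:tildek4} separately, using the arity-$5$ components of the Maurer--Cartan equation \eqref{eq:GMC} for $m$ and $m'$. We also use the arity-$4$ identity $m_3'=m_3-\delta p_2$, which holds because $m\sim_3 m'$ via $p$. Throughout, $\delta$ is a derivation of the bracket and $\delta^2=0$.

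\emph{Step 1.} The arity-$5$ part of $\delta m+\frac12[m,m]=0$ reads
\[
\delta m_4+\tfrac12[m_3,m_3]=0,
\]
since $[m_3,m_3]$ is the only bracket term of arity $5$. The same holds for $m'$. Hence
\[
\delta(m_4-m_4')=-\tfrac12[m_3,m_3]+\tfrac12[m_3',m_3'].
\]
Substituting $m_3'=m_3-\delta p_2$ and using bilinearity together with the graded symmetry of the bracket on degree-$1$ elements gives
\[
\delta(m_4-m_4')=-[m_3,\delta p_2]+\tfrac12[\delta p_2,\delta p_2].
\]

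\emph{Step 2.} Since $|p_2|_{\mathfrak g}=0$ and $\delta m_3=0$ (Proposition \ref{prop:equiv3}),
\[
\delta[p_2,m_3]=[\delta p_2,m_3]+[p_2,\delta m_3]=[\delta p_2,m_3].
\]
Because both entries have degree $1$, the bracket is symmetric, so $[\delta p_2,m_3]=[m_3,\delta p_2]$. This cancels the first term from Step 1. Next,
\[
\delta[p_2,\delta p_2]=[\delta p_2,\delta p_2]+[p_2,\delta^2p_2]=[\delta p_2,\delta p_2],
\]
so $-\frac12\delta[p_2,\delta p_2]=-\frac12[\delta p_2,\delta p_2]$ cancels the remaining term. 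Adding the three contributions gives $\delta\widetilde\kappa_4=0$.

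\emph{Main obstacle.} The only delicate point is sign bookkeeping. One must confirm that with the convention \eqref{eq:gbracket} the bracket of two degree-$1$ elements is symmetric, and that $\delta=[m_2,-]$ satisfies the graded Leibniz rule with the signs used above. Both follow from the Jacobi identity and $|m_2|_{\mathfrak g}=1$. It also has to be checked that all terms indeed lie in arity $4$ and internal degree $-2$, which follows from the degree formula $|f|_{\mathfrak g}=n+j-1$.

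As a consistency check, the computation is compatible with \eqref{eq:tk4}, where $\widetilde\kappa_4=\delta p_3$ is exact whenever $p_3$ exists. Alternatively, one could derive the result from Lemma \ref{lem:distMC} by applying the Maurer--Cartan equation to $e^{\operatorname{ad}_{p_2}}*m$, whose arity-$3$ component is $m_3'$.
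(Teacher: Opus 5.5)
Your argument is correct and is essentially the paper's proof: you subtract the arity-$5$ Maurer--Cartan equations to get $\delta(m_4-m_4')=-[m_3,\delta p_2]+\tfrac12[\delta p_2,\delta p_2]$. You then cancel these two terms against $\delta[p_2,m_3]=[m_3,\delta p_2]$ and $\tfrac12\delta[p_2,\delta p_2]=\tfrac12[\delta p_2,\delta p_2]$, using $\delta m_3=0$ and the symmetry of the bracket on degree-$1$ elements. (One small slip: $m_3'=m_3-\delta p_2$ is the arity-$3$ component of the gauge formula, not an arity-$4$ identity, and in the lemma it is simply the standing hypothesis on $p_2$.)
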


\begin{proof}
	The arity-$5$ components of the Maurer--Cartan equations for $m$ and
	$m'$ give
	\[
	\delta m_4=-\frac12[m_3,m_3],
	\qquad
	\delta m_4'=-\frac12[m_3',m_3'].
	\]
	Writing $d=\delta p_2=m_3-m_3'$ and using that $m_3$ and $d$ have
	DGLA degree one, we obtain
	\[
	\delta(m_4-m_4')
	=
	-[m_3,d]+\frac12[d,d].
	\]
	Moreover,
	\[
	\delta[p_2,m_3]=[d,m_3]=[m_3,d],
	\]
	and
	\[
	\delta[p_2,d]=[d,d].
	\]
	Hence
	\[
	\delta\widetilde\kappa_4(p_2;m,m')=0.
	\]
\end{proof}

For a fixed
\[
p_2\in C_{\mathrm{Harr}}^{2,-1}(H^*,H^*)
\qquad\text{with}\qquad
\delta p_2=m_3-m_3',  
\]
 by \eqref{eq:tk4}  there exists
\[
p_3\in C_{\mathrm{Harr}}^{3,-2}(H^*,H^*)
\]
such that
\[
e^{\operatorname{ad}_{p_2+p_3+\cdots}}*m
\equiv
m'
\pmod{\mathcal F^4}
\]
if and only if
\[
\bigl[
\widetilde\kappa_4(p_2;m,m')
\bigr]
=
0
\quad\text{in}\quad
\operatorname{Harr}^{4,-2}(H^*,H^*),
\]
where  $\widetilde\kappa_4(p_2;m,m')$ is defined by \eqref{eq:tildek4}.

\medskip

We now formulate the higher obstruction theory.  The proof applies
more generally to a complete filtered DGLA
\[
(\mathfrak g,\delta,[-,-],\mathcal F)
\]
satisfying
\[
\delta(\mathcal F^r)\subseteq\mathcal F^{r+1},
\qquad
[\mathcal F^r,\mathcal F^s]\subseteq\mathcal F^{r+s}.
\]
Write
\[
\operatorname{gr}_{\mathcal F}^r\mathfrak g
\coloneqq
\mathcal F^r\mathfrak g/\mathcal F^{r+1}\mathfrak g.
\]
The differential induces maps
\[
\bar\delta:
\operatorname{gr}_{\mathcal F}^r\mathfrak g
\longrightarrow
\operatorname{gr}_{\mathcal F}^{r+1}\mathfrak g.
\]
For the shifted arity filtration on the Harrison DGLA,
\[
\operatorname{gr}_{\mathcal F}^{k-1}\mathfrak g^1
\cong
C_{\mathrm{Harr}}^{k,2-k}(H^*,H^*),
\]
and $\bar\delta$ is the Harrison differential.

Let
\[
m,m'\in\mathcal F^2\mathfrak g^1
\]
be Maurer--Cartan elements and suppose that
\[
m\sim_{k-1}m',
\qquad k\geq4.
\]
Define
\[
\mathcal P_{k-1}(m,m')
\coloneqq
\left\{
p\in\mathcal F^1\mathfrak g^0:
e^{\operatorname{ad}_p}*m-m'
\in\mathcal F^{k-1}\mathfrak g
\right\}.
\]
Let
\[
\pi_k:
\mathcal F^{k-1}\mathfrak g
\longrightarrow
\operatorname{gr}_{\mathcal F}^{k-1}\mathfrak g
\]
be the quotient map. For
$p\in\mathcal P_{k-1}(m,m')$, set
\begin{equation}\label{eq:mmk}
	\widetilde\kappa_k(p;m,m')
	\coloneqq
	\pi_k\left(e^{\operatorname{ad}_p}*m-m'\right).
\end{equation}

\begin{lemma}[Closedness]\label{lem:closed}
	For every $p\in\mathcal P_{k-1}(m,m')$, one has
	\[
	\bar\delta\,
	\widetilde\kappa_k(p;m,m')
	=
	0.
	\]
	In the Harrison DGLA, $\widetilde\kappa_k(p;m,m')$ is therefore a
Harrison cocycle in
\[
C_{\mathrm{Harr}}^{k,2-k}(H^*,H^*).
\]
\end{lemma}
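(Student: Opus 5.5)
The plan is to show that the gauge transform of a Maurer--Cartan element is again Maurer--Cartan, and then read off the curvature identity in the lowest filtration degree where the two elements can differ.

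Set $\tilde m\coloneqq e^{\operatorname{ad}_p}*m$. Since the gauge action \eqref{eq:gaugeaction} of the complete pronilpotent group $\exp(\mathcal F^1\mathfrak g^0)$ preserves Maurer--Cartan elements, $\tilde m$ satisfies \eqref{eq:GMC}. Moreover $\tilde m\in\mathcal F^2\mathfrak g^1$: indeed $e^{\operatorname{ad}_p}(m)\in\mathcal F^2$, and $\delta p\in\mathcal F^2$ because $\delta(\mathcal F^1)\subseteq\mathcal F^2$. Put
\[
u\coloneqq\tilde m-m'\in\mathcal F^{k-1}\mathfrak g^1,
\qquad\text{so that}\qquad
\widetilde\kappa_k(p;m,m')=\pi_k(u).
\]

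Subtracting the Maurer--Cartan equations for $\tilde m=m'+u$ and $m'$ gives
\[
0=\delta u+\tfrac12\bigl([m'+u,m'+u]-[m',m']\bigr)
=\delta u+[m',u]+\tfrac12[u,u].
\]
Here we use that $[m',u]=[u,m']$, since both elements have degree $1$. Now estimate filtration degrees. Because $m'\in\mathcal F^2$ and $u\in\mathcal F^{k-1}$, we have $[m',u]\in\mathcal F^{k+1}$. Similarly $[u,u]\in\mathcal F^{2k-2}\subseteq\mathcal F^{k+1}$, which holds since $k\geq3$, and certainly for $k\ge4$. Hence
\[
\delta u\in\mathcal F^{k+1}.
\]
This says exactly that the class of $\delta u$ in $\operatorname{gr}^{k}_{\mathcal F}\mathfrak g$ vanishes, that is, $\bar\delta\,\pi_k(u)=0$. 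Here $\bar\delta$ is well defined on the graded pieces because $\delta(\mathcal F^{k})\subseteq\mathcal F^{k+1}$.

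It remains to specialize to the Harrison DGLA $\mathfrak g_{H^*}$ with the shifted arity filtration \eqref{eq:HL}. There $\operatorname{gr}^{k-1}_{\mathcal F}\mathfrak g^1\cong C^{k,2-k}_{\mathrm{Harr}}(H^*,H^*)$, and $\delta=[m_2,-]$ raises arity by exactly one. So $\bar\delta$ is the Harrison differential, and $\widetilde\kappa_k$ is a Harrison cocycle.

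The only point requiring care, and the step I expect to be the main obstacle, is that gauge transformations preserve Maurer--Cartan elements in this completed setting. I would cite the standard result for complete pronilpotent DGLAs \cite{GM1988}. Alternatively, I would argue directly via Remark~\ref{rem:log}: $\tilde m$ corresponds to the conjugated coderivation $\Phi(m_2+m)\Phi^{-1}$, whose square is zero. With that in hand, the rest of the argument is bookkeeping of filtration degrees.
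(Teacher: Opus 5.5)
Your proposal is correct and follows essentially the same route as the paper: set $D=e^{\operatorname{ad}_p}*m-m'\in\mathcal F^{k-1}\mathfrak g^1$, subtract the two Maurer--Cartan equations to get $\delta D+[m',D]+\tfrac12[D,D]=0$, and note that both bracket terms lie in $\mathcal F^{k+1}$, so $\delta D\in\mathcal F^{k+1}$, i.e.\ $\bar\delta\,\pi_k(D)=0$. Your additional remarks only make explicit what the paper uses tacitly: that the gauge action preserves Maurer--Cartan elements, and that $2k-2\geq k+1$ needs only $k\geq3$.
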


\begin{proof}
	Set
	\[
	M=e^{\operatorname{ad}_p}*m,
	\qquad
	D=M-m'\in\mathcal F^{k-1}\mathfrak g^1.
	\]
	Since $M$ and $m'$ are Maurer--Cartan elements, subtracting their
	Maurer--Cartan equations gives
	\[
	\delta D+[m',D]+\frac12[D,D]=0.
	\]
	Now
	\[
	[m',D]\in
	[\mathcal F^2,\mathcal F^{k-1}]
	\subseteq
	\mathcal F^{k+1},
	\]
	and
	\[
	[D,D]\in\mathcal F^{2k-2}
	\subseteq\mathcal F^{k+1}.
	\]
	Hence
	\[
	\delta D\in\mathcal F^{k+1},
	\]
	which is precisely
	\[
	\bar\delta\,\pi_k(D)=0.
	\]
\end{proof}

\begin{definition}\label{def:k-obstr}
	The $k$-th obstruction set is
	\[
	\mathcal K_k(m,m')
	\coloneqq
	\left\{
	[\widetilde\kappa_k(p;m,m')]:
	p\in\mathcal P_{k-1}(m,m')
	\right\}
	\subseteq
	\operatorname{Harr}^{k,2-k}(H^*,H^*).
	\]
\end{definition}

\begin{proposition}[Extension criterion]\label{prop:extcri}
	One has
	\[
	m\sim_km'
	\quad\Longleftrightarrow\quad
	0\in\mathcal K_k(m,m').
	\]
	More precisely, let
	$p\in\mathcal P_{k-1}(m,m')$.  The isotopy represented by $p$
	can be extended through arity $k$, without changing its components
	below arity $k-1$, if and only if
	\[
	[\widetilde\kappa_k(p;m,m')]=0.
	\]
\end{proposition}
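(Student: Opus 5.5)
The plan is to prove the ``more precisely'' statement first and then deduce the equivalence from it. Fix $p\in\mathcal P_{k-1}(m,m')$ and write $D=e^{\operatorname{ad}_p}*m-m'\in\mathcal F^{k-1}\mathfrak g^1$, so that $\widetilde\kappa_k(p;m,m')=\pi_k(D)$. By Lemma~\ref{lem:closed}, $\pi_k(D)$ is a $\bar\delta$-cocycle. We look for corrected gauge parameters of the form $p'=p+q$ with $q\in\mathcal F^{k-2}\mathfrak g^0$. In the Harrison DGLA, $q$ starts in arity $k-1$, so the components of $p$ in arity below $k-1$ are unchanged. We then compute $e^{\operatorname{ad}_{p+q}}*m$ modulo $\mathcal F^k$.

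The key computation uses the Baker--Campbell--Hausdorff formula:
\[
e^{\operatorname{ad}_{p+q}}=e^{\operatorname{ad}_{q'}}e^{\operatorname{ad}_p},
\qquad
q'=q+(\text{brackets of }p\text{ and }q).
\]
Every bracket involving $q$ and $p\in\mathcal F^1$ lies in $\mathcal F^{k-1}$, so $q'\equiv q \pmod{\mathcal F^{k-1}}$. Since the gauge action is a group action,
\[
e^{\operatorname{ad}_{p+q}}*m=e^{\operatorname{ad}_{q'}}*(m'+D).
\]
By Lemma~\ref{lem:distMC} this equals $\bigl(e^{\operatorname{ad}_{q'}}*m'\bigr)+e^{\operatorname{ad}_{q'}}(D)$. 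We then expand modulo $\mathcal F^k$:
\begin{itemize}
\item $e^{\operatorname{ad}_{q'}}(D)\equiv D$, because $[q',D]\in\mathcal F^{2k-3}\subseteq\mathcal F^k$ for $k\ge 3$;
\item $e^{\operatorname{ad}_{q'}}*m'\equiv m'+[q',m']-\delta q'\equiv m'-\delta q$, because $[q',m']\in\mathcal F^{k}$ (as $m'\in\mathcal F^2$), the higher terms lie deeper, and $\delta(q'-q)\in\delta\mathcal F^{k-1}\subseteq\mathcal F^k$.
\end{itemize}
This yields
\[
e^{\operatorname{ad}_{p+q}}*m-m'\equiv D-\delta q\pmod{\mathcal F^k},
\]
that is,
\[
\widetilde\kappa_k(p+q;m,m')=\widetilde\kappa_k(p;m,m')-\bar\delta\,\bar q,
\]
where $\bar q$ is the class of $q$ in $\operatorname{gr}^{k-2}\mathfrak g^0$.

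From this identity both directions of the refined statement follow. If $[\widetilde\kappa_k(p)]=0$, choose $\bar q$ with $\bar\delta\bar q=\widetilde\kappa_k(p)$ and lift it to some $q\in\mathcal F^{k-2}\mathfrak g^0$; then $p+q$ realizes $m\sim_k m'$. Conversely, suppose some $p+q$ with $q\in\mathcal F^{k-2}$ realizes $m\sim_k m'$. Then $D-\delta q\in\mathcal F^k$, so $\pi_k(D)=\bar\delta\bar q$ is exact. The main equivalence follows: if $m\sim_km'$ via some $\tilde p$, then $\tilde p\in\mathcal P_{k-1}(m,m')$ and $\widetilde\kappa_k(\tilde p)=0$, so $0\in\mathcal K_k(m,m')$; the reverse implication is the construction just given.

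The step I expect to be most delicate is the bookkeeping showing that all cross terms, the BCH corrections and the nonlinear part of the gauge action, vanish modulo $\mathcal F^k$. This relies on $m'\in\mathcal F^2$ and on $\delta$ raising the filtration degree by one. One must also check that the correction $q$ genuinely lives in arity $k-1$, so that the lower components of $p$ are unchanged.
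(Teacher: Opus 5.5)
Your proposal is correct and follows essentially the same argument as the paper: lift a primitive of the obstruction cocycle to $\mathcal F^{k-2}\mathfrak g^0$, correct the gauge parameter by it, and use the affine gauge formula to see that the new error is $D-\delta q$ modulo $\mathcal F^k$. The differences are minor. The paper takes $\operatorname{BCH}(u,p)$ directly, so $\Gamma_{\bar p}=\Gamma_u\circ\Gamma_p$ and no $q'$ bookkeeping is needed; your additive correction $p+q$ instead gives the identity $\widetilde\kappa_k(p+q)=\widetilde\kappa_k(p)-\bar\delta\bar q$, which also proves the ``only if'' half of the refined statement that the paper leaves implicit.
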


\begin{proof}
	If $m\sim_km'$, choose $p$ such that
	\[
	e^{\operatorname{ad}_p}*m-m'\in\mathcal F^k.
	\]
	Then
	\[
	\widetilde\kappa_k(p;m,m')=0.
	\]
	
	Conversely, suppose that
	\[
	[\widetilde\kappa_k(p;m,m')]=0.
	\]
	Then there is a class
	\[
	\bar u\in
	\operatorname{gr}_{\mathcal F}^{k-2}\mathfrak g^0
	\]
	such that
	\[
	\bar\delta\bar u
	=
	\widetilde\kappa_k(p;m,m').
	\]
	Choose a representative
	\[
	u\in\mathcal F^{k-2}\mathfrak g^0
	\]
	of $\bar u$, and set
	\[
	\bar p\coloneqq\operatorname{BCH}(u,p),
	\]
	so that
	\[
	\Gamma_{\bar p}
	=
	\Gamma_u\circ\Gamma_p,
	\qquad
	\Gamma_v(x)\coloneqq e^{\operatorname{ad}_v}*x.
	\]
	Writing
	\[
	e^{\operatorname{ad}_p}*m=m'+D,
	\qquad
	D\in\mathcal F^{k-1},
	\]
	the affine gauge formula gives, modulo $\mathcal F^k$,
	\[
	e^{\operatorname{ad}_u}*(m'+D)-m'
	\equiv
	D-\delta u.
	\]
	The image of the right-hand side in
	$\operatorname{gr}_{\mathcal F}^{k-1}\mathfrak g$ is zero by the
	choice of $u$. Therefore
	\[
	e^{\operatorname{ad}_{\bar p}}*m-m'
	\in\mathcal F^k,
	\]
	and hence $m\sim_km'$.
\end{proof}

\begin{proposition}[Additivity of obstruction classes]
	\label{prop:obstruction-additivity}
	Let
	\[
	m^{(0)},m^{(1)},m^{(2)}\in\mathcal F^2\mathfrak g^1
	\]
	be Maurer--Cartan elements. Suppose that
	\[
	p\in\mathcal P_{k-1}(m^{(0)},m^{(1)}),
	\qquad
	q\in\mathcal P_{k-1}(m^{(1)},m^{(2)}).
	\]
	Define
	\[
	q*p\coloneqq\operatorname{BCH}(q,p),
	\]
	so that
	\[
	\Gamma_{q*p}=\Gamma_q\circ\Gamma_p.
	\]
	Then
	\[
	q*p\in\mathcal P_{k-1}(	m^{(0)}, m^{(2)})
	\]
	and
	\begin{equation}\label{eq:obstruction-additivity}
		\widetilde\kappa_k(q*p;m^{(0)}, m^{(2)})
		=
		\widetilde\kappa_k(p;m^{(0)},m^{(1)})
		+
		\widetilde\kappa_k(q;m^{(1)},m^{(2)})
	\end{equation}
	in
	$\operatorname{gr}_{\mathcal F}^{k-1}\mathfrak g^1$.
\end{proposition}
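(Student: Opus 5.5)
The plan is to compare the two gauge transforms directly, using the composition law $\Gamma_{q*p}=\Gamma_q\circ\Gamma_p$ together with the affine formula of Lemma \ref{lem:distMC}.

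\textbf{Step 1: the membership claim.} Write
\[
\Gamma_p(m^{(0)})=m^{(1)}+D_1,\qquad \Gamma_q(m^{(1)})=m^{(2)}+D_2,
\]
where $D_1,D_2\in\mathcal F^{k-1}\mathfrak g^1$ by hypothesis. Then
\[
\Gamma_{q*p}(m^{(0)})=\Gamma_q\bigl(m^{(1)}+D_1\bigr).
\]
By Lemma \ref{lem:distMC} with $x=m^{(1)}$ and $\tau=D_1$, this equals
\[
\Gamma_q(m^{(1)})+e^{\operatorname{ad}_q}(D_1)=m^{(2)}+D_2+e^{\operatorname{ad}_q}(D_1).
\]
Hence
\[
\Gamma_{q*p}(m^{(0)})-m^{(2)}=D_2+e^{\operatorname{ad}_q}(D_1).
\]
Since $q\in\mathcal F^1\mathfrak g^0$, the filtration property gives $\operatorname{ad}_q(\mathcal F^{r})\subseteq\mathcal F^{r+1}$. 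So $e^{\operatorname{ad}_q}(D_1)\in\mathcal F^{k-1}$, and the whole difference lies in $\mathcal F^{k-1}$. Therefore $q*p\in\mathcal P_{k-1}(m^{(0)},m^{(2)})$.

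\textbf{Step 2: pass to the graded piece.} Apply $\pi_k$ to the identity from Step 1. Expand
\[
e^{\operatorname{ad}_q}(D_1)=D_1+\sum_{j\ge1}\tfrac{1}{j!}\operatorname{ad}_q^j(D_1).
\]
Each term with $j\ge 1$ lies in $\mathcal F^{k}$, so
\[
e^{\operatorname{ad}_q}(D_1)\equiv D_1 \pmod{\mathcal F^k}.
\]
This yields
\[
\pi_k\bigl(\Gamma_{q*p}(m^{(0)})-m^{(2)}\bigr)=\pi_k(D_1)+\pi_k(D_2),
\]
which is exactly \eqref{eq:obstruction-additivity} by the definition \eqref{eq:mmk}.

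\textbf{Main obstacle.} The one point needing care is that the BCH product $q*p$ is well defined in $\mathcal F^1\mathfrak g^0$, converges, and satisfies $\Gamma_{q*p}=\Gamma_q\circ\Gamma_p$. This holds because $\mathfrak g$ is complete and pronilpotent for the filtration, so BCH converges, and the gauge action is a group action of $\exp(\mathcal F^1\mathfrak g^0)$ (cf.\ Remark \ref{rem:log}). I would take this as the standard fact already used in the proof of Proposition \ref{prop:extcri}. The rest is the filtration bookkeeping above, and requires only $[\mathcal F^1,\mathcal F^{k-1}]\subseteq\mathcal F^{k}$.
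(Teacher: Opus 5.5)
Your proposal is correct and matches the paper's proof essentially verbatim. You write $\Gamma_{q*p}(m^{(0)})-m^{(2)}=\Gamma_q\bigl(m^{(1)}+D_1\bigr)-m^{(2)}=D_2+e^{\operatorname{ad}_q}(D_1)$ via the affine formula of Lemma~\ref{lem:distMC}, then use $e^{\operatorname{ad}_q}(D_1)\equiv D_1 \pmod{\mathcal F^k}$ and project to $\operatorname{gr}_{\mathcal F}^{k-1}\mathfrak g^1$; your explicit check that $q*p\in\mathcal P_{k-1}(m^{(0)},m^{(2)})$ simply spells out a step the paper leaves implicit.
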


\begin{proof}
	Set
	\[
	D_p=\Gamma_p(m^{(0)})-m^{(1)},
	\qquad
	D_q=\Gamma_q(m^{(1)})-m^{(2)}.
	\]
	Then
	\[
	D_p,D_q\in\mathcal F^{k-1}.
	\]
	Using the affine gauge formula,
	\begin{align*}
		\Gamma_{q*p}(m^{(0)})-m^{(2)}
		&=
		\Gamma_q\bigl(m^{(1)}+D_p\bigr)-m^{(2)}\\
		&=
		D_q+e^{\operatorname{ad}_q}(D_p).
	\end{align*}
	Since
	$q\in\mathcal F^1\mathfrak g^0$ and
	$D_p\in\mathcal F^{k-1}\mathfrak g^1$, we have
	\[
	e^{\operatorname{ad}_q}(D_p)
	\equiv
	D_p
	\pmod{\mathcal F^k}.
	\]
	Projecting to
	$\operatorname{gr}_{\mathcal F}^{k-1}\mathfrak g^1$
	gives \eqref{eq:obstruction-additivity}.
\end{proof}

\begin{corollary}[Stabilizers and the torsor of obstructions]
	\label{cor:obstruction-torsor}
	Assume that $m\sim_{k-1}m'$.
	
	\begin{enumerate}
		\item The set
$
		\mathcal K_k(m,m)
$
		is an additive subgroup of
		$\operatorname{Harr}^{k,2-k}(H^*,H^*)$.
		
		\item The stabilizer obstruction subgroups agree:
		\[
		\mathcal K_k(m,m)
		=
		\mathcal K_k(m',m').
		\]
		
		\item The obstruction set
		\[
		\mathcal K_k(m,m')
		\]
		is a torsor, equivalently a coset, under this common additive
		subgroup.
	\end{enumerate}
\end{corollary}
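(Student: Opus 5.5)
The plan is to derive all three assertions from the additivity formula of Proposition~\ref{prop:obstruction-additivity}, applied to the three relevant choices of triples of Maurer--Cartan elements, together with the observation that the sets $\mathcal P_{k-1}(\cdot,\cdot)$ are closed under the BCH product and under inversion. First record two facts. (i) $0\in\mathcal P_{k-1}(m,m)$, and $\widetilde\kappa_k(0;m,m)=0$, since $e^{\operatorname{ad}_0}*m=m$. (ii) If $p\in\mathcal P_{k-1}(m,m')$ then $-p\in\mathcal P_{k-1}(m',m)$. To see (ii), write $\Gamma_p(m)=m'+D$ with $D\in\mathcal F^{k-1}$. Lemma~\ref{lem:distMC} gives
\[
\Gamma_{-p}(m')=\Gamma_{-p}(m'+D)-e^{-\operatorname{ad}_p}(D)=m-e^{-\operatorname{ad}_p}(D),
\]
and the last term lies in $\mathcal F^{k-1}$. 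Moreover, modulo $\mathcal F^k$ it is $-D$, so
\[
\widetilde\kappa_k(-p;m',m)=-\widetilde\kappa_k(p;m,m').
\]

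For (1), let $a=[\widetilde\kappa_k(p;m,m)]$ and $b=[\widetilde\kappa_k(q;m,m)]$ with $p,q\in\mathcal P_{k-1}(m,m)$. Proposition~\ref{prop:obstruction-additivity} with $m^{(0)}=m^{(1)}=m^{(2)}=m$ shows that $q*p\in\mathcal P_{k-1}(m,m)$ and that its class is $a+b$. Fact (i) supplies $0$, and fact (ii) supplies $-a$. Hence $\mathcal K_k(m,m)$ is a subgroup. Here I pass from equalities in $\operatorname{gr}^{k-1}_{\mathcal F}\mathfrak g^1$ to cohomology classes; this is legitimate because each $\widetilde\kappa_k$ is a cocycle by Lemma~\ref{lem:closed}.

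For (2), fix $r\in\mathcal P_{k-1}(m,m')$, which exists because $m\sim_{k-1}m'$, and write $\kappa_r=[\widetilde\kappa_k(r;m,m')]$. Take $s\in\mathcal P_{k-1}(m,m)$. The element $r*s*(-r)$ lies in $\mathcal P_{k-1}(m',m')$: apply additivity twice, along $m'\to m\to m\to m'$. Its class is $-\kappa_r+[\widetilde\kappa_k(s;m,m)]+\kappa_r=[\widetilde\kappa_k(s;m,m)]$. This gives $\mathcal K_k(m,m)\subseteq\mathcal K_k(m',m')$, and the reverse inclusion follows by symmetry.

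For (3), additivity along $m\to m\to m'$ gives $\kappa_r+\mathcal K_k(m,m)\subseteq\mathcal K_k(m,m')$. Conversely, for any $p\in\mathcal P_{k-1}(m,m')$, the element $(-r)*p$ lies in $\mathcal P_{k-1}(m,m)$, and its class is $[\widetilde\kappa_k(p;m,m')]-\kappa_r$. Hence $\mathcal K_k(m,m')=\kappa_r+\mathcal K_k(m,m)$ is a single coset.

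The only step that needs real care is fact (ii). Here one must check that the affine formula, applied with $q=-p$, is compatible with the convention $\Gamma_{q*p}=\Gamma_q\circ\Gamma_p$, and that the sign survives the projection $\pi_k$. This holds because $e^{-\operatorname{ad}_p}\equiv\Id$ modulo raising the filtration.
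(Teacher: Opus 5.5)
Your proposal is correct and follows the paper's proof essentially step for step: you use the identity and BCH additivity for (1), the conjugate $r*s*(-r)\in\mathcal P_{k-1}(m',m')$ for (2), and comparison with a fixed element of $\mathcal P_{k-1}(m,m')$ for (3). The only differences are minor: you check directly via Lemma~\ref{lem:distMC} that $-p\in\mathcal P_{k-1}(m',m)$ with negated obstruction (the paper simply asserts that $-p$ represents the inverse and applies additivity to $(-p)*p=0$, which tacitly needs this membership), and you write the coset as $\kappa_r+\mathcal K_k(m,m)$ rather than the paper's $[\widetilde\kappa_k(p_0;m,m')]+\mathcal K_k(m',m')$, and these agree by (2).
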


\begin{proof}
	The identity gauge transformation represents zero. Closure under
	addition follows from
	Proposition~\ref{prop:obstruction-additivity}. If
	$p\in\mathcal P_{k-1}(m,m)$, then the inverse gauge transformation
	is represented by $-p$, and additivity applied to
	\[
	(-p)*p=0
	\]
	shows that its obstruction class is the negative of the class of
	$p$. This proves (1).
	
	Choose
	\[
	p\in\mathcal P_{k-1}(m,m').
	\]
	If
	$q\in\mathcal P_{k-1}(m,m)$, let
	\[
	r\coloneqq p*q*(-p).
	\]
	Then
	$r\in\mathcal P_{k-1}(m',m')$. Repeated application of
	Proposition~\ref{prop:obstruction-additivity} gives
	\[
	[\widetilde\kappa_k(r;m',m')]
	=
	[\widetilde\kappa_k(-p;m',m)]
	+
	[\widetilde\kappa_k(q;m,m)]
	+
	[\widetilde\kappa_k(p;m,m')].
	\]
	Since
	\[
	p*(-p)=0
	\]
	as a gauge transformation from $m'$ to itself, the first and third
	terms cancel. Hence
	\[
	[\widetilde\kappa_k(r;m',m')]
	=
	[\widetilde\kappa_k(q;m,m)].
	\]
	This proves
	\[
	\mathcal K_k(m,m)
	\subseteq
	\mathcal K_k(m',m').
	\]
	The reverse inclusion follows by interchanging $m$ and $m'$, proving
	(2).
	
	Finally, fix
	\[
	p_0\in\mathcal P_{k-1}(m,m').
	\]
	Proposition~\ref{prop:obstruction-additivity} shows that composition
	on the left by a stabilizer of $m'$ adds its obstruction class.
	Conversely, if
	$p\in\mathcal P_{k-1}(m,m')$, then
	\[
	q\coloneqq p*(-p_0)
	\]
	belongs to
	$\mathcal P_{k-1}(m',m')$, and
	\[
	[\widetilde\kappa_k(p;m,m')]
	=
	[\widetilde\kappa_k(p_0;m,m')]
	+
	[\widetilde\kappa_k(q;m',m')].
	\]
	Thus
	\[
	\mathcal K_k(m,m')
	=
	[\widetilde\kappa_k(p_0;m,m')]
	+
	\mathcal K_k(m',m'),
	\]
	which proves (3).
\end{proof}

\begin{theorem}[Higher obstruction classes]\label{thm:additivek}
	Let $m$ and $m'$ be two minimal $C_\infty$-algebra enhancements of
	a GCA $H^*$ and assume that
	\[
	m\sim_{k-1}m',
	\qquad k\geq4.
	\]
	Then:
	
	\begin{enumerate}
		\item The elements defining $\mathcal K_k(m,m')$ are Harrison
cocycles of bidegree $(k,2-k)$. More precisely, they lie in
\[
C_{\mathrm{Harr}}^{k,2-k}(H^*,H^*).
\]
The obstruction set depends only on the truncations of $m$ and $m'$
modulo $\mathcal F^k$; equivalently, it depends only on their operations
through arity $k$.
		
		\item One has
		\[
		0\in\mathcal K_k(m,m')
		\quad\Longleftrightarrow\quad
		m\sim_km'.
		\]
		
		\item The sets
		\[
		\mathcal K_k(m,m)
		\quad\text{and}\quad
		\mathcal K_k(m',m')
		\]
		are equal additive subgroups of
		$\operatorname{Harr}^{k,2-k}(H^*,H^*)$.
		
		\item The obstruction set
		$\mathcal K_k(m,m')$ is a torsor under this common subgroup.
	\end{enumerate}
\end{theorem}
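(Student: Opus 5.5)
The theorem is essentially a repackaging of the general filtered-DGLA results just proved, specialized to the Harrison DGLA $\mathfrak g_{H^*}$ with its shifted arity filtration \eqref{eq:HL}. My plan is to verify that $\mathfrak g_{H^*}$ satisfies the hypotheses $\delta(\mathcal F^r)\subseteq\mathcal F^{r+1}$ and $[\mathcal F^r,\mathcal F^s]\subseteq\mathcal F^{r+s}$, which was already noted in Subsection \ref{ss:fdgla}. I also need $m,m'\in\mathcal F^2\mathfrak g^1$, which holds because $m=m_3+m_4+\cdots$ starts in arity $3$. Then I read off the four assertions from Lemma \ref{lem:closed}, Proposition \ref{prop:extcri} and Corollary \ref{cor:obstruction-torsor}, using Corollary \ref{cor:filtered-isotopy} to translate between $m\sim_k m'$ and gauge equivalence modulo $\mathcal F^k$.

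For (1), Lemma \ref{lem:closed} gives $\bar\delta\,\widetilde\kappa_k(p;m,m')=0$. I then use the identification $\operatorname{gr}^{k-1}_{\mathcal F}\mathfrak g^1\cong C^{k,2-k}_{\mathrm{Harr}}(H^*,H^*)$, under which $\bar\delta$ is the Harrison differential, to get the bidegree claim. The dependence only on truncations needs a separate argument:
\begin{itemize}
\item The arity-$\le k$ part of $e^{\operatorname{ad}_p}*m$ depends only on $m_3,\dots,m_k$ and on $p$ modulo $\mathcal F^{k-1}$. This is the argument of Lemma \ref{lem:truncation-equivariance}, or directly: brackets with elements of $\mathcal F^1$ raise filtration, and $\delta p$ terms do not involve $m$.
\item Hence both the condition $p\in\mathcal P_{k-1}(m,m')$ and the class $\pi_k(e^{\operatorname{ad}_p}*m-m')$ are unchanged if $m,m'$ are replaced by elements congruent modulo $\mathcal F^k$.
\end{itemize}
This is the only point requiring any care. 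I expect it to be the main (mild) obstacle, since one must check that the term $\frac{e^{\operatorname{ad}_p}-1}{\operatorname{ad}_p}(\delta p)$ and the higher components $p_j$, $j\ge k-1$, affect only arity $\ge k$ modulo terms independent of $m$. These terms cancel between any two choices of $m$ with the same truncation, because of the affine formula in Lemma \ref{lem:distMC}. Concretely, writing $\tilde m=m+\tau$ with $\tau\in\mathcal F^k$, I get $e^{\operatorname{ad}_p}*\tilde m=e^{\operatorname{ad}_p}*m+e^{\operatorname{ad}_p}\tau$, and $e^{\operatorname{ad}_p}\tau\in\mathcal F^k$, which settles it cleanly.

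For (2), I apply Proposition \ref{prop:extcri} together with Corollary \ref{cor:filtered-isotopy}. For (3) and (4), I note that $m\sim_{k-1}m'$ makes $\mathcal P_{k-1}(m,m')$ nonempty, and $\mathcal P_{k-1}(m,m)$ always contains $0$. Corollary \ref{cor:obstruction-torsor}(1)--(3) then gives exactly that $\mathcal K_k(m,m)=\mathcal K_k(m',m')$ is an additive subgroup of $\operatorname{Harr}^{k,2-k}(H^*,H^*)$ and that $\mathcal K_k(m,m')$ is a coset of it. Additivity passes to cohomology classes because the identity \eqref{eq:obstruction-additivity} holds at cochain level in $\operatorname{gr}^{k-1}$.
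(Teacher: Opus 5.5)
Your proposal is correct and follows the paper's proof, which likewise just assembles Lemma~\ref{lem:closed}, Proposition~\ref{prop:extcri}, Proposition~\ref{prop:obstruction-additivity} and Corollary~\ref{cor:obstruction-torsor}, specialized to the Harrison DGLA. Your affine-formula argument via Lemma~\ref{lem:distMC} ($\tau\in\mathcal F^k\Rightarrow e^{\operatorname{ad}_p}\tau\in\mathcal F^k$) is a useful addition, since it proves the truncation-dependence clause in (1), which the paper's one-line proof leaves implicit.
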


\begin{proof}
	The assertions follow from
	Lemma~\ref{lem:closed},
	Proposition~\ref{prop:extcri},
	Proposition~\ref{prop:obstruction-additivity}, and
	Corollary~\ref{cor:obstruction-torsor}.
\end{proof}

\medskip

Let $r\geq2$. We are now ready to classify the real and rational homotopy types of $(r-1)$-connected closed smooth manifolds of dimension $n\leq\ell(r-1)+2$, where $\ell\geq4$. 

Denote by
$C_{\mathrm{Harr,cyc,unit}}^{*,*}(H^*,H^*)$ the subcomplex of the
Harrison cochain complex $C^{*,*}(H^*,H^*)$ consisting of normalized
cyclic Harrison cochains, and denote by
$\mathfrak g_{H^*,\mathrm{cyc,unit}}$ the corresponding sub-DGLA of
$\mathfrak g_{H^*}$.
Hamilton-Lazarev  proved that     for  a Poincar\'e   GCA  $H$   there is 1-1  correspondence   between   the set  $C_\infty (H^*)$ of  all     isotopy  classes of minimal  $C_\infty$-algebra enhancements  of $H^*$ and  the set $C_{\infty, \,cyc, \, unit }(H^*)$  of all  unital cyclic  isotopy  classes of minimal unital   cyclic  $C_\infty$-algebra  enhancements  of $H^*$ \cite[Theorems 5.5, 5.10]{HL2008}.

\begin{theorem}\label{thm:finite}
Let $\F=\Q$ or $\F=\R$, and let $M$ be a closed
$(r-1)$-connected manifold of dimension
\[
n\leq \ell(r-1)+2,
\qquad r\geq2,
\qquad \ell\geq4.
\]
Then the $\F$-homotopy type of $M$ is determined uniquely by its
cohomology algebra
\[
H^*\coloneqq H^*(M;\F)
\]
and by the isotopy class modulo $(\ell-2)$, in the category of minimal
unital cyclic $C_\infty$-algebras, of the corresponding enhancement of
$H^*$.

In particular:
\begin{enumerate}
\item if $n\leq5r-3$, the $\F$-homotopy type is determined by $H^*$
and the primary cyclic Harrison class
\[
[Tm_3]\in HC_{\mathrm{Harr}}^{4,-1}(H^*);
\]
\item if $n\leq6r-4$, and $M'$ is another closed
$(r-1)$-connected manifold together with a fixed graded-algebra
isomorphism
\[
\theta\colon H^*(M;\F)\xrightarrow{\cong}H^*(M';\F),
\]
then $M$ and $M'$ have the same $\F$-homotopy type if and only if,
after using $\theta$ to regard both enhancements as structures on the
same Poincar\'e algebra, their primary classes
agree and
\[
0\in\mathcal K_4(m,m').
\]
\end{enumerate}
\end{theorem}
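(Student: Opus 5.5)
The plan is a degree count: show that, in the range $n\leq\ell(r-1)+2$, every minimal unital cyclic $C_\infty$-enhancement of $H^*$ has $m_k=0$ for all $k\geq\ell-1$. Then a cyclic isotopy modulo $(\ell-2)$ is automatically a full cyclic isotopy, and we can invoke the Kadeishvili correspondence between weak equivalence classes and $C_\infty$-isotopy classes.

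First I will set up the model. For $\F=\R$ I take the de Rham algebra of $M$ with a Riemannian metric, using Examples~\ref{ex:hodge}(1). For $\F=\Q$ I take a finite-dimensional nondegenerate PDGCA model with a Hodge homotopy, using Examples~\ref{ex:hodge}(2); this is available since $r\geq2$, so $M$ is simply connected. Theorem~\ref{thm:cyclic} then gives a minimal unital cyclic $C_\infty$-enhancement $m$ of $H^*$. By Kadeishvili's theorem (recalled in the introduction), $M$ and $M'$ with $H^*(M;\F)\cong H^*(M';\F)$ via $\theta$ have the same $\F$-homotopy type iff their transferred enhancements are $C_\infty$-isotopic. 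These are simply connected spaces of finite type, so the homotopy type is the weak equivalence class of the PDGCA. By Theorem~\ref{thm:iso_from_equiv}(b), $(1)\Leftrightarrow(2)$, this holds iff they are isotopic as unital cyclic structures. So it suffices to compare the enhancements $m,m'$ inside $\mathfrak g_{H^*,\mathrm{cyc},\mathrm{unit}}$.

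The key step is the vanishing lemma. Let $f\in C^{k,2-k}_{\mathrm{Harr,cyc,unit}}(H^*,H^*)$. By normalization, $f$ vanishes when some input is $1$. Since $H^i=0$ for $0<i<r$, every nonzero value has all inputs of degree $\geq r$, and hence output degree $\geq kr+2-k=k(r-1)+2$. By cyclicity, $\langle f(x_1,\dots,x_k),x_{k+1}\rangle$ is, up to sign, $\langle f(x_2,\dots,x_{k+1}),x_1\rangle$. So if $x_{k+1}$ has degree $0$, i.e.\ is a multiple of $1$, the pairing vanishes by normalization. Hence the output must pair nontrivially with some $x_{k+1}$ of degree $\geq r$, which forces the output degree to be $\leq n-r$. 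Nonvanishing therefore requires
\[
k(r-1)+2\leq n-r\leq \ell(r-1)+2-r,
\]
i.e.\ $(k+1)(r-1)+1\leq\ell(r-1)$, i.e.\ $k\leq\ell-2$. Thus $m_k=m'_k=0$ for $k\geq\ell-1$. The same count applies to any cyclic unital transform $e^{\operatorname{ad}_p}*m$ with $p\in\mathcal F^1\mathfrak g^0_{H^*,\mathrm{cyc},\mathrm{unit}}$, because the sub-DGLA is closed under the gauge action.

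Now suppose $m\sim_{\ell-2}m'$ in the cyclic unital category, so that $e^{\operatorname{ad}_p}*m\equiv m'\pmod{\mathcal F^{\ell-2}}$. Both sides are cyclic unital Maurer--Cartan elements, so by the lemma they vanish in all arities $\geq\ell-1$. They therefore agree in every arity, and the isotopy is a full one. This proves the main assertion; the converse direction (same homotopy type implies isotopic modulo $(\ell-2)$) is trivial.

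For (1), take $\ell=5$: then $n\leq5r-3$, and isotopy modulo $3$ is governed by the primary class via Proposition~\ref{prop:equiv3}, \eqref{eq:equiv3a}. For (2), take $\ell=6$, which gives $n\leq6r-4$. Equality of the primary classes gives $m\sim_3m'$, and then Proposition~\ref{prop:extcri} gives $m\sim_4m'$ iff $0\in\mathcal K_4(m,m')$. This criterion is in the full Harrison DGLA, so I pass between cyclic and non-cyclic isotopy modulo $4$ using Theorem~\ref{thm:iso_from_equiv}(b) together with the vanishing above.

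I expect the main obstacle to be exactly this last passage. A non-cyclic gauge transformation does not preserve cyclicity, so the vanishing lemma does not apply to $e^{\operatorname{ad}_p}*m$ directly. I would handle it as follows. Non-cyclic $m\sim_4 m'$, with both structures having vanishing $m_k$ for $k\geq5$, gives isotopy modulo $k$ for every $k$: I expect the higher obstruction sets $\mathcal K_k$, $k\geq5$, to vanish by applying the degree count to the relevant Harrison cochains via the cyclic comparison \eqref{eq:cyclicharr}. Theorem~\ref{thm:iso_from_equiv}(a) then yields a full isotopy, and (b) upgrades it to a cyclic unital one. The delicate point is the normalization argument at the top degree, namely that an output in degree $n$ pairs only with the unit. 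This must be checked carefully for the obstruction cocycles as well as for the $m_k$.
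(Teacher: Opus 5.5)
Your proposal is correct and is essentially the paper's proof. You reduce via Kadeishvili and Hamilton--Lazarev to the cyclic unital category, and use the degree count of Lemma~\ref{lem:degree} to kill every operation of arity $\geq\ell-1$. As you note, this makes a cyclic isotopy modulo $(\ell-2)$ already a full one, so the Mittag--Leffler argument is not needed. You then specialize to $\ell=5$ with Proposition~\ref{prop:equiv3} and to $\ell=6$ with Proposition~\ref{prop:extcri}.

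Your extra step in (2) is sound and makes explicit a point the paper's proof passes over, namely that $\mathcal K_4$ is defined with non-cyclic gauge parameters. What you need is $\operatorname{Harr}^{k,2-k}(H^*,H^*)=0$ for $k\geq5$, which follows from \eqref{eq:cyclicharr} together with Lemma~\ref{lem:degree} applied on the cyclic side only; non-cyclic Harrison cochains in these bidegrees need not vanish, since their outputs may lie in $H^n$. After that, Theorem~\ref{thm:iso_from_equiv}(a),(b) finish the argument.
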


We begin with the following degree restriction.

\begin{lemma}[Degree restriction for normalized cyclic Harrison cochains]
\label{lem:degree}
Let $H^*$ be an $(r-1)$-connected PGCA of degree $n$, where
$r\geq2$ and
\[
n\leq\ell(r-1)+2,
\qquad \ell\geq4.
\]
Then
\[
C_{\mathrm{Harr,cyc,unit}}^{k,2-k}(H^*,H^*)=0
\qquad\text{for every }k\geq\ell-1.
\]
\end{lemma}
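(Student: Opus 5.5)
Via the pairing $T$ of \eqref{eq:transform}, a normalized cyclic cochain $f\in C_{\mathrm{Harr,cyc,unit}}^{k,2-k}(H^*,H^*)$ is determined by the scalar form $Tf(x_1,\dots,x_{k+1})=\langle f(x_1,\dots,x_k),x_{k+1}\rangle$. The plan is to show that this form vanishes on all homogeneous inputs by a degree count, and then use nondegeneracy of the Poincar\'e pairing on $H^*$ to conclude $f=0$.

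\textbf{Step 1: reduce to inputs of degree at least $r$.} By normalization (strict unitality), $f(x_1,\dots,x_k)=0$ whenever some $x_i$ with $1\le i\le k$ equals $1$. Cyclicity \eqref{eq:cyclicsign} moves $x_{k+1}$ into an argument slot of $f$ at the cost of a sign, so $Tf$ also vanishes when $x_{k+1}=1$. Since $H^*$ is $(r-1)$-connected, $H^0=\F\cdot 1$ and $H^j=0$ for $0<j<r$. Hence, by multilinearity, we may assume every $x_i$ is homogeneous of degree $\ge r$.

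\textbf{Step 2: degree count.} The element $f(x_1,\dots,x_k)$ has degree $\sum_{i\le k}|x_i|+2-k$. It pairs nontrivially with $x_{k+1}$ only if
\[
\sum_{i=1}^{k+1}|x_i|=n+k-2.
\]
The left side is at least $(k+1)r$, so a nonzero value forces $(k+1)r\le n+k-2$, that is, $k(r-1)\le n-r-2$. Using $n\le \ell(r-1)+2$ gives $k(r-1)\le \ell(r-1)-r$, and hence $k\le \ell-\frac{r}{r-1}<\ell-1$ because $r\geq2$. Therefore, for $k\ge\ell-1$, we get $Tf\equiv0$.

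\textbf{Step 3: conclude $f=0$.} Since $Tf=0$, each value $f(x_1,\dots,x_k)$ pairs to zero with all of $H^*$. Nondegeneracy of the Poincar\'e pairing on $H^*$ then gives $f(x_1,\dots,x_k)=0$.

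The main point requiring care is Step 1. One must make sure that normalization kills unit inputs in every slot, and that cyclicity transfers this to the last slot $x_{k+1}$. One must also check that only degree $0$ and degrees $\ge r$ occur, which uses connectedness of the PGCA. The arithmetic in Step 2 is routine, but it should be rechecked at the edge case $k=\ell-1$: there the inequality $(\ell-1)(r-1)\le \ell(r-1)-r$ reduces to $r-1\ge r$, which is false, so the edge case is covered.
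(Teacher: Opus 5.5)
Your proof is correct and follows essentially the same route as the paper: normalization together with cyclicity excludes the unit from all $k+1$ slots, connectedness then forces every degree to be at least $r$, and the degree count $(k+1)r\le n+k-2$ combined with $n\le\ell(r-1)+2$ rules out $k\ge\ell-1$. The only cosmetic difference is that you show $Tf\equiv0$ first and then invoke nondegeneracy, whereas the paper starts from a hypothetical nonzero value and uses nondegeneracy to produce a partner $a_{k+1}$ before counting degrees.
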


\begin{proof}
Let
\[
\phi\in C_{\mathrm{Harr,cyc,unit}}^{k,2-k}(H^*,H^*)
\]
and suppose that
$\phi(a_1,\ldots,a_k)\neq0$ for homogeneous inputs
$a_i\in H^{d_i}$. Since $\phi$ is normalized, none of the inputs can
be the unit; hence $d_i\geq r$ for $1\leq i\leq k$.

By nondegeneracy of the Poincar\'e pairing, there is a homogeneous
$a_{k+1}\in H^{d_{k+1}}$ such that
\[
\bigl\langle\phi(a_1,\ldots,a_k),a_{k+1}\bigr\rangle\neq0.
\]
Cyclicity and normalization imply
\[
\bigl\langle\phi(a_1,\ldots,a_k),1\bigr\rangle=0,
\]
so $a_{k+1}$ is not the unit and therefore $d_{k+1}\geq r$. Degree
compatibility with the Poincar\'e pairing gives
\[
\sum_{i=1}^{k+1}d_i=n+k-2.
\]
Consequently,
\[
(k+1)r
\leq n+k-2
\leq \ell(r-1)+k,
\]
and hence
\[
(k+1)(r-1)+1\leq\ell(r-1).
\]
Since $r\geq2$, this implies $k+1<\ell$, and therefore
$k\leq\ell-2$. Thus no nonzero cochain of the indicated bidegree can
exist when $k\geq\ell-1$.
\end{proof}

\begin{proof}[Proof of Theorem \ref{thm:finite}]
Kadeishvili's classification identifies the $\F$-homotopy type with
the $C_\infty$-isotopy class of the minimal enhancement of $H^*$.
By the Hamilton--Lazarev comparison theorem, this class may be
represented and compared in the category of minimal unital cyclic
$C_\infty$-algebras. Lemma \ref{lem:degree} shows that every operation
of arity at least $\ell-1$ vanishes in this category. Therefore an
isotopy modulo $(\ell-2)$ already identifies the full cyclic
$C_\infty$-structures, and Theorem \ref{thm:iso_from_equiv} yields the
first assertion.

For $n\leq5r-3$, take $\ell=5$ and use Proposition
\ref{prop:equiv3}. For $n\leq6r-4$, take $\ell=6$ and combine the
primary comparison with the extension criterion
(Proposition \ref{prop:extcri}), which says precisely that the isotopy
extends through arity $4$ if and only if
$0\in\mathcal K_4(m,m')$.
\end{proof}

 \begin{corollary}\label{cor:Zhoucyclic} 
 		Let $r\geq 2$ and let $\Aa^\ast$ be a $(r-1)$-connected Poincar\'e DGCA of degree $n$ admitting a Hodge homotopy.
 		If $n\leq\ell(r-1)+2$, with $\ell\geq4$, then $H^* (\Aa^*)$ carries a minimal unital  cyclic $C_\infty$-algebra enhancement making it $C_\infty$-quasi-isomorphic to $\Aa^\ast$,  and whose multiplications $m_k$ vanish for $k\geq\ell-1$. In particular, all the multiplications $m_k$ with $k\geq 5$ vanish if $n\leq 6r-4$ and  all the multiplications $m_k$ with $k\geq 4$ vanish if $n\leq 5r-3$. Also, all the multiplications $m_k$ with $k\geq 3$ vanish if $n\leq 4r-2$, and so  $\Aa^\ast$ is formal in this case.
 \end{corollary}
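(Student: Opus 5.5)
The plan is to take the transferred structure $C_\infty(\Hh^*)$ of Theorem \ref{thm:cyclic} itself, and to kill its high-arity operations by a degree count parallel to Lemma \ref{lem:degree}. By Remark \ref{rem:hodgedecomp} and the homotopy transfer via the contraction data $(j,\pi_{\Hh^*},d^-)$, the operations $m_k$ of \eqref{eq:mk} define a minimal $C_\infty$-structure on $\Hh^*\cong H^*(\Aa^*)$. The transfer also provides a $C_\infty$-quasi-isomorphism extending $j$, so $C_\infty(\Hh^*)$ is $C_\infty$-quasi-isomorphic to $\Aa^*$. Theorem \ref{thm:cyclic} shows that this structure is unital and cyclic, with cyclicity in the sense of \eqref{eq:cyclicsign}, and Remark \ref{rem:vanishmkr} shows that it is strictly unital.

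The key step is to check that each $m_k$, for $k\ge3$, lies in $C_{\mathrm{Harr,cyc,unit}}^{k,2-k}(H^*,H^*)$, and then to apply Lemma \ref{lem:degree}.
\begin{enumerate}
\item Each $m_k$ is a Harrison cochain, since the transferred structure is $C_\infty$ (it is transferred from a commutative algebra).
\item Each $m_k$ has degree $2-k$.
\item Each $m_k$ is cyclic by Theorem \ref{thm:cyclic}.
\item Each $m_k$ is normalized, i.e.\ it vanishes when any input is $1$, by Remark \ref{rem:vanishmkr}.
\end{enumerate}
Note that $\Hh^*\cong H^*(\Aa)$ is an $(r-1)$-connected PGCA of degree $n$. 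Lemma \ref{lem:degree} then gives $m_k=0$ for all $k\ge\ell-1$.

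One could also avoid the lemma and count directly. Suppose $\langle m_k(a_1,\dots,a_k),a_{k+1}\rangle\ne0$. Normalization forces every $a_i$ with $i\le k$ to have degree at least $r$. Cyclicity, combined with normalization, forces $a_{k+1}\neq1$, so $a_{k+1}$ also has degree at least $r$. The degrees must satisfy $\sum_{i=1}^{k+1}|a_i|=n+k-2$, which gives $(k+1)r\le\ell(r-1)+k$ and hence $k\le\ell-2$. Nondegeneracy of the pairing on $H^*$ then forces $m_k=0$ when $k\ge\ell-1$.

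The special cases follow by substituting values of $\ell$ into the bound $k\le\ell-2$.
\begin{itemize}
\item For $\ell=6$, so $n\le6r-4$, the operations $m_k$ vanish for $k\ge5$.
\item For $\ell=5$, so $n\le5r-3$, they vanish for $k\ge4$.
\item For $n\le4r-2=4(r-1)+2$, take $\ell=4$. Then $m_k$ vanishes for $k\ge3$, so the minimal model is $(H^*,m_2)$ and $\Aa^*$ is formal.
\end{itemize}

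The main obstacle I expect is justifying that the transferred $m_k$ vanish when the last slot, paired against $1$, is the unit. That is, one must show $\langle m_k(a_1,\dots,a_k),1\rangle=0$. I would derive this from cyclicity \eqref{eq:cyclicsign}: rotating gives $\pm\langle m_k(a_2,\dots,a_k,1),a_1\rangle$, and this vanishes by strict unitality. A second point needing care is that Lemma \ref{lem:degree} is stated for abstract cochains on $H^*$. I would transport the structure through the isometric identification $\Hh^*\to H^*(\Aa)$, which is compatible with the pairing by Lemma \ref{lem:adjoint}(1), so that the lemma applies.
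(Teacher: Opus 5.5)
Your proposal is correct and is essentially the paper's argument. The Hodge-homotopy transfer gives a minimal unital cyclic structure by Theorem~\ref{thm:cyclic}. Its operations $m_k$ ($k\ge3$) are normalized cyclic Harrison cochains of bidegree $(k,2-k)$ on the $(r-1)$-connected PGCA $H^*(\Aa)$, via the isometric identification. Lemma~\ref{lem:degree} then forces $m_k=0$ for $k\ge\ell-1$; its proof is exactly your direct degree count, including the use of cyclicity plus normalization to exclude pairing against $1$. Taking $\ell=6,5,4$ gives the special cases and formality.
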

 
Corollary \ref{cor:Zhoucyclic} strengthens the formulation of \cite[Theorem 2.12]{FiorenzaLe2025} by showing that the transferred structure may be taken to be cyclic.

\subsection{Example: the Fern\'andez--Mu\~noz $8$-manifold}
\label{subs:example}

We illustrate the secondary obstruction by the simply connected
compact symplectic $8$-manifold $\widetilde M$ constructed by
Fern\'andez and Mu\~noz in \cite{FM2008}.  The manifold
$\widetilde M$ is obtained by resolving the singularities of the
orbifold
\[
M/\mathbb Z_3,
\]
where
\[
M=\Gamma\backslash(H_{\mathbb C}\times\mathbb C)
\]
is a nilmanifold that is a principal $T^2$-bundle over $T^6$.

The explicit calculation in \cite[Theorem 3.2]{FM2008} is most
naturally written with complex coefficients.  On the nilmanifold
$M$, let
\[
\alpha=\mu\wedge\bar\mu,
\qquad
\beta_1=\nu\wedge\bar\nu,
\qquad
\beta_2=\nu\wedge\bar\eta,
\qquad
\beta_3=\bar\nu\wedge\eta .
\]
These forms are closed and $\mathbb Z_3$-invariant.  Moreover,
\[
\alpha\wedge\beta_i=d\xi_i,
\qquad i=1,2,3,
\]
where one may take
\[
\xi_1=-\theta\wedge\bar\mu\wedge\bar\nu,
\qquad
\xi_2=-\theta\wedge\bar\mu\wedge\bar\eta,
\qquad
\xi_3=\bar\theta\wedge\mu\wedge\eta .
\]
After descending these forms to $M/\mathbb Z_3$, modifying them near
the singular points, and extending them across the exceptional
divisors, Fern\'andez and Mu\~noz obtain classes on $\widetilde M$
for which
\begin{equation}\label{eq:FM-higher-product}
	\left[
	\xi_1\wedge\xi_2\wedge\beta_3
	+
	\xi_2\wedge\xi_3\wedge\beta_1
	+
	\xi_3\wedge\xi_1\wedge\beta_2
	\right]
	\neq0
	\quad\text{in}\quad
	H^8(\widetilde M;\mathbb C).
\end{equation}
More precisely, before descent and resolution, the form representing
\eqref{eq:FM-higher-product} is
\[
2\theta\wedge\mu\wedge\nu\wedge\eta
\wedge\bar\theta\wedge\bar\mu\wedge\bar\nu\wedge\bar\eta,
\]
whose integral is nonzero.  Their higher-product criterion therefore
shows that $\widetilde M$ is non-formal.

We now interpret this example in terms of the obstruction theory
developed above.  Fern\'andez and Mu\~noz prove that \cite[Lemma 2.4]{FM2008}
\[
H^{2j+1}(\widetilde M;\mathbb Q)=0
\qquad
\text{for every }j.
\]
Let
\[
m=m_3+m_4+\cdots
\]
be a minimal unital cyclic $C_\infty$-algebra enhancement of
$H^*(\widetilde M;\mathbb Q)$.  Since $m_3$ has internal degree
$-1$ and the cohomology of $\widetilde M$ is concentrated in even
degrees, we necessarily have
\[
m_3=0.
\]
Thus the primary obstruction vanishes.

On the other hand, $\widetilde M$ is simply connected and
$8$-dimensional.  Applying Lemma~\ref{lem:degree} with
$r=2$ and $\ell=6$ shows that
\[
m_k=0
\qquad
\text{for every }k\geq5.
\]
Consequently, the only possible nontrivial higher multiplication is
$m_4$.  Since $m_3=0$, the Maurer--Cartan equation implies
\[
\delta m_4=0.
\]
Moreover, when the structure $m$ is compared with the formal
enhancement, the secondary obstruction set reduces to the singleton
\[
\mathcal K_4(m,0)
=
\{[m_4]\}
\subseteq
\operatorname{Harr}^{4,-2}
\bigl(H^*(\widetilde M;\mathbb Q),
H^*(\widetilde M;\mathbb Q)\bigr).
\]
By Theorem~\ref{thm:finite}, vanishing of this class would make the
enhancement isotopic to the formal one. The non-formality of
$\widetilde M$ therefore implies
\[
[m_4]\neq0,
\qquad\text{equivalently}\qquad
0\notin\mathcal K_4(m,0).
\]
Using the Poincar\'e pairing and the cyclic--Harrison comparison, this
is equivalently expressed as
\[
[Tm_4]\neq0
\quad\text{in}\quad
HC_{\mathrm{Harr}}^{5,-2}
\bigl(H^*(\widetilde M;\mathbb Q)\bigr),
\]
with the convention on the second grading adopted in the preceding
subsubsection on cyclic Harrison cohomology.

Thus the Fern\'andez--Mu\~noz manifold lies in the stratum with
vanishing primary obstruction and nonvanishing secondary obstruction.
It is therefore distinguished from the formal enhancement of the
same Poincar\'e cohomology algebra by its class $[m_4]$, rather than
by a nonzero ternary multiplication.

\section{A new proof of Crowley--Nordstr\"om's formality result}\label{sec:cavalcanti}

In this section, using the results established in the previous sections, we give an alternative proof of the following result due to Crowley--Nordstr\"om \cite[Theorem 1.14]{CN} (generalizing a previous result by Cavalcanti \cite[Theorem 4]{Cavalcanti2006}). We assume throughout this section that the ground field is $\F = \R$.\footnote{This section was written in collaboration with Domenico Fiorenza.}

\begin{theorem}\label{thm:CCN}
	Let $\Aa^\ast$ be an $(r-1)$-connected Poincar\'e DGCA of degree
	$n=4r-1$, where $r\geq2$, and suppose that $b^r\leq3$. Set
	\[
	H^*\coloneqq H^*(\Aa).
	\]
	If there exists an element $\varphi\in H^{2r-1}$ such that
	\[
	\varphi\cdot-:
	H^r\longrightarrow H^{3r-1}
	\]
	is an isomorphism, then  $\Aa^*$ is intrinsically
	formal. 
\end{theorem}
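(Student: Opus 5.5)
Since $r\geq2$, we have $n=4r-1\leq5r-3$. By Theorem \ref{thm:finite}(1), applied with $\ell=5$, the real homotopy type of any PDGCA with cohomology $H^*$ is therefore determined by the primary class $[Tm_3]\in HC_{\mathrm{Harr}}^{4,-1}(H^*)$ of a minimal unital cyclic enhancement. Intrinsic formality means that every such enhancement is isotopic to the formal one $m=0$. So it suffices to show the following: for every minimal unital cyclic enhancement $m$ of $H^*$, $m_3$ is a Harrison coboundary $\delta p_2$. By Proposition \ref{prop:equiv3}, this gives $m\sim_3 0$, hence $m\sim 0$.

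\emph{Step 1: reduce to a tensor on $H^r$.} We first pin down where $m_3$ can be nonzero, using the degree count of Lemma \ref{lem:degree}. Normalization and cyclicity force all four arguments of $Tm_3$ to have degree at least $r$. Their degrees sum to $n+1=4r$, so each argument lies in $H^r$. Hence $m_3$ is determined by the $4$-linear form
\[
T(x_1,x_2,x_3,x_4)=\langle m_3(x_1,x_2,x_3),x_4\rangle
\]
on $V=H^r$, which has $\dim V\leq3$. This form is cyclically (anti)symmetric according to \eqref{eq:cyclicsign} and vanishes on shuffles (the Harrison condition). The cocycle condition $\delta m_3=0$ imposes further linear constraints through the products $H^r\cdot H^r\to H^{2r}$.

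\emph{Step 2: produce coboundaries from $\varphi$.} The natural candidates are
\[
p_2(x,y)=\lambda(x,y)\,\varphi,\qquad x,y\in H^r,
\]
with $\lambda$ a bilinear form on $V$ of the symmetry type allowed by Harrison normalization, and $p_2$ zero on other inputs. We may also need to add components of $p_2$ that land in $H^{2r-1}$ from other input degrees; the degree count should show these are the only relevant ones. We then compute $T\delta p_2$ on $V^{\otimes4}$. Its terms have the form $\langle x\cdot p_2(y,z),w\rangle=\lambda(y,z)\langle \varphi x,w\rangle$ and $\langle p_2(xy,z),w\rangle$ (the latter vanishes or is controlled by degree). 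So $T\delta p_2$ is a symmetrization of $\lambda\otimes g$, where
\[
g(x,w)=\langle\varphi\cdot x,w\rangle
\]
is a \emph{nondegenerate} bilinear form on $V$ precisely by the Lefschetz-type hypothesis on $\varphi$.

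\emph{Step 3: compare dimensions.} The remaining task is linear algebra. We show that the image of $\lambda\mapsto T\delta(\lambda\varphi)$ contains the space of all cyclic Harrison $4$-forms on $V$ satisfying the cocycle constraints, for $\dim V\leq3$. Equivalently, every admissible $T$ is a symmetrized product $\lambda\odot g$. We split into cases by the parity of $r$, because the symmetry type of $g$, of $\lambda$, and of the cyclic Harrison $4$-forms all change with parity. For $r$ odd, $g$ is antisymmetric; in that case nondegeneracy forces $\dim V$ even, so $\dim V=2$ or $0$, which makes the count very small. We also use the vanishing $m_3(\alpha,\alpha,\alpha)=0$ of \eqref{eq:vanishmkr} for a Hodge-transferred representative; over $\R$ one exists by Examples \ref{ex:hodge} after passing to a finite-dimensional model. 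Together with the odd and even symmetry lemmas (Lemmas \ref{lem:oddsymmetry} and \ref{lem:evensymmetry3}), this cuts down the space of admissible $T$.

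I expect Step 3 to be the main obstacle, particularly the even case with $\dim V=3$. There the space of Harrison-cyclic $4$-tensors is largest, and surjectivity must come from the nondegenerate symmetric form $g$: one would diagonalize $g$ over $\R$ and check the resulting finite linear system. This is also where the bound $b^r\leq3$ should be sharp.
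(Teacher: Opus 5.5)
There is a genuine gap. Your reduction matches the paper's: $4r-1\le 5r-3$, so Theorem~\ref{thm:finite}(1) and Proposition~\ref{prop:equiv3} reduce everything to showing $[m_3]=0$ for a Hodge-transferred enhancement, and the degree count confines $m_3$ to $(H^r)^{\otimes3}$. But the proof that $[m_3]=0$, which is the entire content of the theorem, is not supplied, and the Step~2 ansatz as stated cannot work.

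\emph{The ansatz fails.} Suppose $p_2$ is supported on $H^r\otimes H^r$ only, and take $b^r\ge1$, so $\varphi\neq0$. For $v\in H^{2r}$ and $y,z\in H^r$ you get $\delta p_2(v,y,z)=v\cdot p_2(y,z)=\lambda(y,z)\,v\varphi\in H^{4r-1}$, whereas $m_3(v,y,z)=0$. Poincar\'e duality pairs $H^{2r-1}$ with $H^{2r}$, so some $v$ has $\int v\varphi\neq0$, and this forces $\lambda=0$. The extra components you allow, landing in $H^{2r-1}$ from other input degrees, do not exist for normalized cochains. What is needed are components $H^r\otimes H^{2r}\to H^{3r-1}$ and $H^{2r}\otimes H^r\to H^{3r-1}$, as in Propositions~\ref{prop:coboundart-dim2} and~\ref{prop:a3}: $p_2(x,v)=p_2(v,x)=\langle v,\varphi\rangle\,\Lambda(x)\varphi$, where $\lambda=g(\Lambda-,-)$. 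On $V^{\otimes4}$ these terms do not vanish for degree reasons. The terms $\langle p_2(xy,z),w\rangle$ and $\langle p_2(x,yz),w\rangle$ contribute the summands of type $g_{ik}\lambda_{jl}$ and $g_{jk}\lambda_{il}$. Together with your two summands of type $\lambda_{jk}g_{il}$ and $\lambda_{ik}g_{jl}$, they form the full Kulkarni--Nomizu product of $\lambda$ with $g$. Your map $\lambda\mapsto T\delta(\lambda\varphi)$ sees only half of it, so the surjectivity you hope for in Step~3 is false for that map.

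\emph{The missing idea for Step~3.} Transport $T$ through $\varphi$ by defining $R$ via $\varphi\cdot R(x,y,z)=m_3(x,z,y)$.
\begin{itemize}
\item For $r$ even, $m_3(x,y,x)=0$ (Lemma~\ref{lem:evensymmetry3}) makes $R$ antisymmetric in the first index pair. Cyclicity then gives antisymmetry in the second pair and pair-symmetry.
\item In dimension $\le3$ the first Bianchi identity is then automatic, so $R$ is an algebraic curvature tensor for the metric $g$.
\item Since the Weyl tensor vanishes in dimensions $2$ and $3$, $R$ is the Kulkarni--Nomizu product of $\mathrm{Ric}-\frac{s}{4}g$ with $g$. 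This is exactly the $(r,r,r,r)$-component of $T\delta p_2$ for $\lambda=\frac{s}{4}g-\mathrm{Ric}$.
\item The remaining degree patterns $(2r,r,r)$, $(r,2r,r)$, $(r,r,2r)$, paired with $1$, are checked directly.
\end{itemize}
This is where $b^r\le3$ enters. It replaces your unperformed ``diagonalize $g$ and solve the linear system''.

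\emph{The odd case.} For $r$ odd you also leave the count undone. The paper avoids a coboundary computation there. Since $b^r=2$, the graded-symmetric square $H^r\odot H^r$ is one-dimensional. The product $H^r\odot H^r\to H^{2r}$ is nonzero because $g$ is nondegenerate, hence injective. Theorem~3.10 of Fiorenza--Kawai--L\^e--Schwachh\"ofer then identifies the small quotient algebra with $H^*$, which gives formality.
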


\begin{proof}
	 By Example~\ref{ex:hodge}(2), $\Aa^\ast$ is weakly equivalent to  a   Poincar\'e DGCA  admitting a
	Hodge homotopy. Thus, without loss of generality, we  assume  that  $\Aa^\ast$  admits a  Hodge homotopy. Homotopy transfer therefore gives a minimal unital
	cyclic $C_\infty$-algebra enhancement
	\[
	m=m_3+m_4+\cdots
	\]
	of $H^* \coloneqq H^*(\Aa^*)$.	
	We shall prove that its primary obstruction class
	\[
	[m_3]\in
	\operatorname{Harr}^{3,-1}(H^*,H^*)
	\]
	vanishes. Since
	\[
	4r-1\leq5r-3
	\qquad (r\geq2),
	\]
	Theorem~\ref{thm:finite} then implies that this $C_\infty$-structure
	is $C_\infty$-isotopic to the formal one. 
	
	\medskip
	
	We shall repeatedly use the degree restriction
	\begin{equation}\label{eq:m3-critical-degree}
		m_3(x,y,z)=0
		\qquad
		\text{unless }x,y,z\in H^r,
	\end{equation}
	which follows from \cite[Lemma~2.18]{FiorenzaLe2025}.

Moreover, by \eqref{eq:vanishmkr},
\[
m_3(x,x,x)=0
\qquad
\text{for every }x\in H^r.
\]

If $b^r\leq1$, multilinearity therefore implies that $m_3$ vanishes
identically.

\subsection{The algebraic curvature tensor of $\Aa^\ast$}
To prove Theorem~\ref{thm:CCN} for $b^r=2$ and $b^r=3$, we require some preparation. For the remainder of this section, $(\Aa^\ast,d,d^-,\varphi)$ is exactly as in the statement of Theorem \ref{thm:CCN}. In particular, the multiplication $m_3$ on $H^\ast$ enjoys the symmetries described in Section \ref{sec:cyclicity}. We define the tensor $R$ as
\begin{align}
R\colon (H^r)^{\otimes 3}&\to H^r\notag\\
 x\otimes y\otimes z&\mapsto \psi \big(m_3 (x, z, y)\big), \label{F-def}
\end{align}
where $\psi\colon H^{3r-1}\xrightarrow{\sim} H^r$ is the inverse of $\varphi\cdot-$. In other words, $R$ is defined implicitly by the equation

\[
\varphi\cdot R(x,y,z)=m_3(x,z,y).
\]

Notice the swapping of $y$ and $z$ on the right-hand side. This convention is motivated by comparing the symmetries of the Bianchi--Massey tensor introduced by Crowley--Nordstr\"om \cite{CN} with the symmetries of the ``curvature'' tensor $R$ in \eqref{eq:curvature}.
Using the nondegenerate pairing $\la -, -\ra_\varphi$ on $H^r$, we can view $R$ as a multilinear map $(H^r)^{\otimes 4}\to \R$. With this perspective, we write $R_{ijkl}$ for the structure constants of $R$ with respect to a chosen linear basis $(e_i)$ of $H^r$:

\[
R_{ijkl}=\langle R(e_i,e_j,e_k),e_l\rangle_\varphi=\int \varphi\cdot R(e_i,e_j,e_k)\cdot e_l =\int m_3(e_i,e_k,e_j)\cdot e_l,
\]
which is equivalent to:
\begin{equation}R_{ijkl}=\langle m_3(e_i,e_k,e_j),e_l\rangle.\label{eq:curvature}
\end{equation}

\begin{lemma}\label{lemma:cyclicity-R} We have the relation:
\begin{equation}\label{eq:cyclicity-R}
	R_{ijkl}=-(-1)^rR_{klji}.
\end{equation}
\end{lemma}
\begin{proof} By the cyclic identity \eqref{eq:cyclicsign}, we have
\[
R_{ijkl}=\langle m_3(e_i,e_k,e_j), e_l \rangle = - (-1)^r \langle m_3(e_k, e_j,e_l), e_i \rangle=-(-1)^rR_{klji}.\]
\end{proof}

\begin{lemma}[Symmetries of $R$]\label {lem:ric} 
\hfill\begin{enumerate}\item[a)] \textbf{Assume $r$ is even.} The tensor $R$ satisfies:
\begin{enumerate}\item[(1)] $R_{ijkl} = -R_{jikl}$ (Antisymmetry in the first two indices)\item[(2)] $R_{ijkl} = -R_{ijlk}$ (Antisymmetry in the last two indices)\item[(3)] $R_{ijkl} = R_{klij}$ (Pair-swapping symmetry)
\end{enumerate}
Therefore, $R$ defines a linear map $S^2(\wedge^2 H^r) \to \R$. If $\dim H^r\leq 3$, then $R$ also satisfies the Bianchi identity and is therefore a valid algebraic curvature tensor.    
\item[b)] \textbf{Assume $r$ is odd.} The tensor $R$ satisfies:
		\begin{enumerate}
			\item[(1)] $R_{ijkl}= R_{jikl}$ (Symmetry in the first two indices)
			\item[(2)] $R_{ijkl}= R_{ijlk}$ (Symmetry in the last two indices)
			\item[(3)] $R_{ijkl}= R_{klij}$ (Pair-swapping symmetry)
		\end{enumerate}
		Therefore, $R$ defines a linear map $S^2(S^2 H^r) \to \R$.
	\end{enumerate}
\end{lemma}

\begin{proof} a) Assume that $r$ is even. To prove (1), we must show that
	\[
	\int m_3(x,z,x)\cdot w = 0. 
	\]
	This is an immediate consequence of  \eqref{eq:symmetry2e}. To prove (2), we utilize Lemma \ref{lemma:cyclicity-R} along with (1) to obtain
	\[
	R_{ijkl}\stackrel{(1)}{=}-R_{jikl}\stackrel{\eqref{eq:cyclicity-R}}{=}R_{klij}\stackrel{\eqref{eq:cyclicity-R}}{=}-R_{ijlk}.
	\]
	To prove (3), we apply Lemma \ref{lemma:cyclicity-R} followed by (2):
	\[
	R_{ijkl} \stackrel{\eqref{eq:cyclicity-R}}{=} -R_{klji} \stackrel{(2)}{=} R_{klij}.
	\]

	Finally, it is a well-known result (see, e.g., \cite[Chapter 1, \S G,1.107]{Besse1987}) that in dimension 3 or less, any tensor $R$ satisfying symmetries (1)--(3) automatically satisfies the Bianchi identity.

	b) Assume that $r$ is odd.
	
	(1) The symmetry in the first two indices follows from Lemma \ref{lem:oddsymmetry}.
	
	(2) The symmetry in the last two indices follows from \eqref{eq:cyclicity-R}, taking into account the symmetry of the first two indices.
	
	(3) By the same logic utilized in the proof of the even case, the cyclicity combined with the first two symmetries yields the pair-swapping symmetry.

\end{proof}

\underline{Case a: $r$ even.} We will assume $r$ is even and $\dim H^r\leq 3$. Because $r$ is even, the nondegenerate pairing $\langle-,-\rangle_\varphi$ is symmetric. For emphasis, and to establish a more direct correspondence with standard notation in Riemannian geometry, we will write $g=\langle-,-\rangle_\varphi$.
By interpreting $R$ as a linear map

\begin{align*}
R\colon H^r\otimes H^r&\to \mathrm{End}(H^r), &  x\otimes y\mapsto \{z\mapsto R(x,z,y)\}
\end{align*}

we can define the corresponding \emph{Ricci tensor}
\begin{equation}\label{eq:ricci}\mathrm{Ric} \coloneqq \mathrm{tr}(R)\colon H^r\otimes H^r\to \R.
\end{equation}

If $(e_i)$ is a linear basis of $H^r$, then
\[
\mathrm{Ric}_{ij}=g^{kl}R_{ikjl},
\]
where we define $g_{kl}=g(e_k,e_l)=\langle e_k,e_l\rangle_\varphi$ and let $g^{kl}$ be its inverse metric tensor. Since, under our assumptions, $R$ is an algebraic curvature tensor, it satisfies the Bianchi identity. The standard argument from Riemannian geometry dictates that $\mathrm{Ric}$ is symmetric. Indeed, the usual contraction of the first Bianchi identity with
$g^{kl}$, together with the curvature symmetries, gives
\[
\mathrm{Ric}_{ij}=\mathrm{Ric}_{ji}.
\] Moreover, since the Weyl tensor vanishes in dimensions 2 and 3, an algebraic curvature tensor in these dimensions is entirely determined by its Ricci tensor. Consequently, the multiplication $m_3$ is completely governed by $\mathrm{Ric}$.
Notice that $m_3$ is a degree $-1$ Harrison 3-cocycle on $H^\ast$, and that $\mathrm{Ric}\cdot \varphi$ serves as a component of a degree $-1$ Harrison 2-cochain. Remarkably, as we will demonstrate in the following subsections, the exact mechanism by which $\mathrm{Ric}$ determines $m_3$ is by realizing $m_3$ as the Harrison coboundary of a specifically constructed extension of $\mathrm{Ric}\cdot \varphi$. This structural fact guarantees that the Harrison cohomology class of $m_3$ vanishes, thereby establishing that $\Aa^\ast$ is formal.

\underline{Case a.2}.  Now we shall   show that $m_3$  is a   Harrison coboundary  in the case $r$  is even  and $b^r=2$.

Let  $\phi  \in C^{2, -1} (H^*, H^*)$ be  Harrison 2-cochain. Assume $\phi(x,y)=0$ unless $(\deg x,\deg y)=(0,0)\pmod r$.  Then  we have
\[
\delta\phi(x, y, z) = x \phi(y, z) - \phi(xy, z) + \phi(x, yz) - \phi(x, y) z.
\] 
Recalling that $m_3(x,y,z)=0$ unless $\deg (x,y,z)=(r,r,r)$, we see that the equation
\[
\delta\phi=m_3
\]
is equivalent to
\[
\begin{aligned}
&x\phi(y,z)-\phi(xy,z)+\phi(x,yz)-\phi(x,y)z\\
&\qquad=
\begin{cases}
	m_3(x,y,z), &\text{if $\deg(x,y,z)=(r,r,r)$},\\
	0, &\text{otherwise}.
\end{cases}
\end{aligned}
\]
and so, since the pairing $\langle-,-\rangle$ is nondegenerate, to
\begin{equation}\label{eq:coboundary}
\begin{aligned}
&\bigl\langle
x\phi(y,z)-\phi(xy,z)+\phi(x,yz)-\phi(x,y)z,w
\bigr\rangle\\
&\qquad=
\begin{cases}
\langle m_3(x,y,z),w\rangle,
&\text{if $\deg(x,y,z,w)=(r,r,r,r)$},\\
0, &\text{otherwise}.
\end{cases}
\end{aligned}
\end{equation}
Recall that in dimension 2, an algebraic curvature tensor is expressed in terms of the metric and of its scalar curvature through the Kulkarni--Nomizu product as
\[
R = \frac{s}{4} g \mathbin{\bigcirc\mspace{-15mu}\wedge} g,
\]
i.e.,
\begin{equation}\label{eq:KN-dim2}
	R_{ijkl}=\frac{s}{2}(g_{ik}g_{jl}-g_{il}g_{jk}),
\end{equation}
where
\[
s=g^{ij}\mathrm{Ric}_{ij}
\]
is the scalar curvature of $R$.
Analyzing Equation \eqref{eq:coboundary},  taking into account  the Lefschetz  isomorphism:  $\varphi\cdot-:H^r\to H^{3r-1}$,  we use the ansatz  that  $\phi (x, y)  = \lambda (x, y) \varphi$ for  $x, y \in H^r$. Taking  into account  degree restrictions  by Lemma  \ref{lem:degree}, we further use the ansatz  that 
$ \phi(a,b) = 0$ if $\deg(a,b)\not\in \{(r,r),(r,2r),(2r,r)\}$.    Using   \eqref{eq:KN-dim2},  we are led to    the following.
\begin{proposition}\label{prop:coboundart-dim2}
	Let   $\phi \in C^{2, -1} ( H^*, H^*)$
	be the Harrison cochain defined  by the following formulas:
	\begin{align*}
		\phi(x,y)&\coloneqq-\frac{s}{4}g(x,y)\cdot \varphi \qquad \text{if $x,y\in H^r$}\\
		\phi(x,v)&\coloneqq-\frac{s}{4} \langle v, \varphi\rangle x\cdot  \varphi \qquad \text{if $x\in H^r, \, v\in H^{2r}$}\\
		\phi(v,y)&\coloneqq-\frac{s}{4}\langle v, \varphi\rangle  y\cdot \varphi  \qquad \text{if $y\in H^r,\, v\in H^{2r}$}\\
		\phi(a,b)&\coloneqq0 \qquad \text{if $\deg(a,b)\not\in \{(r,r),(r,2r),(2r,r)\}$} \\
	\end{align*}
	Then $\delta \phi =m_3$.
\end{proposition}

\begin{proof}
	We consider the case when $\deg(x,y,z,w)=(r,r,r,r)$ first. By linearity, we only need consider the case $(x,y,z,w)=(e_i,e_k,e_j,e_l)$. In this case we have to prove that
	\begin{equation}\label{eq:coboundary-ijkl}
		\langle e_i \phi(e_k, e_j) - \phi(e_ie_k, e_j) + \phi(e_i, e_ke_j) - \phi(e_i, e_k) e_j,e_l\rangle=R_{ijkl},
	\end{equation}
	but this is precisely equation \eqref{eq:KN-dim2}. To address the case when $\deg(x,y,z,w)\neq (r,r,r,r)$, we notice that by degree reasons, the left hand side of \eqref{eq:coboundary} vanishes unless $\deg x+\deg y+ \deg z+\deg w=4r$. Since $\Aa^\ast$ is $(r-1)$-connected, if none between $x,y,z,w$ is in degree zero then they all have degree at least $r$, and so if $\deg(x,y,z,w)\neq (r,r,r,r)$ we have $\deg x+\deg y+ \deg z+\deg w>4r$ and the left hand side of \eqref{eq:coboundary} vanishes. When one among $x,y,z$ has degree zero (and so by linearity we may assume it is the element $1$), an immediate direct computation shows that the left hand side of \eqref{eq:coboundary} vanishes. We are therefore left with considering the case $\deg w=0$, i.e., by linearity $w=1$, with $x,y,z$ of degree at least $r$ and $\deg x+\deg y+\deg z=4r$. Since $\phi$ vanishes unless its arguments have degrees in $\{(r,r),(r,2r),(2r,r)\}$ this immediately restricts the possibilities for $\deg(x,y,z)$ in order to have a possibly nonzero left hand side to $(2r,r,r)$, $(r,2r,r)$ and $(r,r,2r)$. In the first case our equation reduces to
	\[
	\int\left(-\frac{s}{4}g(y,z)\cdot x\cdot  \varphi +\frac{s}{4}\langle x,\varphi\rangle y\cdot\varphi \cdot z\right)=0,
	\]
	which is trivially verified. The other two cases are analogous.
	
\end{proof}

\underline{Case  a.3}.  Now we shall  show that $m_3$  is a Harrison coboundary  in the case that $r$ is even and $b^r=3$.
 We use the fact that in dimension 3 an algebraic curvature tensor $R$ is expressed in terms of the metric, of its Ricci tensor, and of the scalar curvature, through the   Kulkarni--Nomizu product as
\[
R = \left( \mathrm{Ric} - \frac{s}{4}g \right) \mathbin{\bigcirc\mspace{-15mu}\wedge} g,
\]
i.e., 
\begin{equation}\label{eq:KN-dim3}
	R_{ijkl}=g_{ik}\mathrm{Ric}_{jl}-g_{il}\mathrm{Ric}_{jk}+g_{jl}\mathrm{Ric}_{ik}-g_{jk}\mathrm{Ric}_{il}-\frac{s}{2}\left(g_{ik}g_{jl}-g_{il}g_{jk}\right).
\end{equation}
Let
\[
\eta\colon H^r\to H^r
\]
be the linear endomorphism defined by 
\[
\mathrm{Ric}(x,y)=g(\eta(x),y).
\]
In terms of a linear basis $(e_i)$ of $H^r$, this is
\[
\eta_i^j=\mathrm{Ric}_{ik}g^{kj}.
\]

\begin{proposition}\label{prop:a3}
	Let $\phi \in C^{2, -1} (H^*, H^*)$
	be the Harrison cochain defined as follows:
	\begin{align*}
		\phi(x,y)&=\left(\frac{s}{4}g(x,y)-\mathrm{Ric}(x,y)\right)\cdot \varphi \qquad \text{if $x,y\in H^r$}\\
		\phi(x,v)&= \langle v, \varphi\rangle(\frac{s}{4} x-\eta(x))\cdot  \varphi \qquad \text{if $x\in H^r, \, v\in H^{2r}$}\\
		\phi(v,y)&=\langle v, \varphi\rangle (\frac{s}{4} y-\eta(y))\cdot \varphi  \qquad \text{if $y\in H^r,\, v\in H^{2r}$}\\
		\phi(a,b)&=0 \qquad \text{if $\deg(a,b)\not\in \{(r,r),(r,2r),(2r,r)\}$} \\
	\end{align*}
	Then $\delta \phi = m_3$.
\end{proposition}
\begin{proof}
As in the proof of Proposition \ref{prop:coboundart-dim2}, we first
consider $\deg(x,y,z,w)=(r,r,r,r)$. The desired identity is then
\eqref{eq:coboundary-ijkl}, which is precisely \eqref{eq:KN-dim3}.

If $\deg(x,y,z,w)\neq(r,r,r,r)$, the same degree argument reduces the
verification to $w=1$ and
\[
\deg(x,y,z)\in\{(2r,r,r),(r,2r,r),(r,r,2r)\}.
\]
For $\deg(x,y,z)=(2r,r,r)$, the equation reduces to
	\[
	\langle x \phi(y, z) - \phi(xy, z) + \phi(x, yz) - \phi(x, y) z,w\rangle=0
	\]
	\[
\begin{aligned}
0={}&\int\Bigl(
\bigl(\tfrac{s}{4}g(y,z)-\mathrm{Ric}(y,z)\bigr)x\varphi\\
&\hspace{32mm}
-\langle x,\varphi\rangle
\bigl(\tfrac{s}{4}y-\eta(y)\bigr)\varphi z
\Bigr).
\end{aligned}
\]
	which is trivially verified. The other two cases are analogous.
	
\end{proof}

Propositions~\ref{prop:coboundart-dim2} and~\ref{prop:a3} show that
$[m_3]=0$ in both even-dimensional cases. Hence Theorem~\ref{thm:finite}
implies that $\Aa^*$ is formal when $r$ is even.

\underline{Case b: $r$ odd.}  In this final case, $b^r$ must be even because the bilinear form $\la -, -\ra_\varphi$ induces a non-degenerate symplectic form on $H^{r}$. Under our dimensional bounds, this strictly requires $b^r = 2$. Consequently, $\dim(H^r\odot H^r)=\dim(H^r\wedge H^r)=1$. Since the pairing $\langle-,-\rangle_\varphi$ is nondegenerate, the induced multiplication mapping $H^r\odot H^r\to H^{2r}$ is nonzero. Because $\dim(H^r\odot H^r)=1$, any nonzero linear map out of $H^r\odot H^r$ is automatically injective. Thus, $\ker(\cdot\colon H^r\odot H^r\to H^{2r})=0$.
We now invoke \cite[Theorem 3.10]{FKLS2021}, which establishes that if $\ker(\cdot\colon H^r\odot H^r\to H^{2r})=0$, then the small quotient algebra $\mathcal{Q}_{\mathrm{small}}^*$ of $\Aa^\ast$ satisfies $\mathcal{Q}_{\mathrm{small}}^k = H^k$ in the critical degrees $k = 2r-1$ and $k = n-2r+1$. Hence, the same theorem ensures that
$\mathcal{Q}_{\mathrm{small}}^*\cong H^*$, so the small quotient and,
therefore, the full algebra $\Aa^*$ are formal.

Thus $\Aa^*$ is formal in every case. Since the argument uses only the
Poincar\'e algebra $H^*$ and the existence of the Lefschetz class
$\varphi$, it applies to every Poincar\'e DGCA having this cohomology
algebra. Therefore $H^*$, and equivalently $\Aa^*$ in the terminology of
the theorem, is intrinsically formal.
\end{proof}
	
\section{A borderline extension of the Fiorenza--L\^e vanishing theorem}
\label{sec:cavalcantiex}

Fiorenza--L\^e proved in
\cite[Theorem~2.12]{FiorenzaLe2025} that if an
$(r-1)$-connected Poincar\'e DGCA of degree $n$ admits a Hodge
homotopy and
\[
n\leq 5r-3,
\]
then the operations $m_k$ of the transferred minimal unital
$C_\infty$-algebra vanish for every $k\geq4$. In this section we
extend this conclusion to the borderline dimension $n=5r-2$ under
the additional assumption $b^r\leq2$.

\begin{theorem}\label{thm:m4}
	Let $(\Aa^*,d)$ be an $(r-1)$-connected Poincar\'e DGCA over $\Q$
	of degree
	\[
	n\leq5r-2,
	\]
	where $r\geq2$, and suppose that $\Aa^*$ admits a Hodge homotopy
	$d^-$ and that
	\[
	b^r\leq2.
	\]
	Then the operations of the minimal unital cyclic $C_\infty$-algebra
	transferred to $H^*(\Aa)$ via $d^-$ satisfy
	\[
	m_k=0
	\qquad\text{for every }k\geq4.
	\]
\end{theorem}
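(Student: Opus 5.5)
We first reduce to the single borderline case. If $n\leq 5r-3$, the vanishing of $m_k$ for $k\geq4$ is \cite[Theorem~2.12]{FiorenzaLe2025}. It also follows from Corollary~\ref{cor:Zhoucyclic} with $\ell=5$, since $5(r-1)+2=5r-3$. So we may assume $n=5r-2$.

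In that case Lemma~\ref{lem:degree} with $\ell=6$ applies, because $6(r-1)+2=6r-4\geq 5r-2$ for $r\geq2$. It kills every normalized cyclic cochain of bidegree $(k,2-k)$ with $k\geq5$. By Theorem~\ref{thm:cyclic} and Remark~\ref{rem:vanishmkr}, the transferred operations are normalized cyclic cochains. Hence $m_k=0$ for $k\geq5$, and only $m_4$ remains.

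For $m_4$ we redo the degree count of Lemma~\ref{lem:degree} with $k=4$. A nonzero value $\langle m_4(a_1,\dots,a_4),a_5\rangle$ needs all $a_i$ of degree $\geq r$, by normalization and unitality, and also
\[
\sum_{i=1}^5 d_i = n+4-2 = 5r.
\]
So every $a_i$ lies in $H^r$. Thus $Tm_4$ is a cyclic $5$-linear form on $H^r$, and it suffices to show $\langle m_4(a_1,\dots,a_4),a_5\rangle=0$ for $a_i\in H^r$.

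If $b^r\leq1$, this follows from \eqref{eq:vanishmkr}, which gives $m_4(x,x,x,x)=0$, together with multilinearity. For $b^r=2$ we fix a basis $x,y$ of $H^r$. The form $Tm_4$ is then determined by its values on words in $x,y$ of length $5$. We use three tools to kill these values:
\begin{itemize}
\item[(i)] cyclic invariance \eqref{eq:cyclicsign}, which reduces to necklaces;
\item[(ii)] the reflection symmetries, namely Lemma~\ref{lem:oddsymmetry} (the full reversal identity \eqref{eq:odd-reflection}) when $r$ is odd and Lemma~\ref{lem:evensymmetry4} when $r$ is even;
\item[(iii)] the vanishing \eqref{eq:vanishmkr} applied to $\alpha=sx+ty$ and polarized.
\end{itemize}
Expanding $m_4(sx+ty,\dots)$ in $s,t$, each coefficient of $s^a t^b$ gives a linear relation: a sum over shuffles of $m_4$ with $a$ copies of $x$ and $b$ copies of $y$ vanishes. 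Pairing these with $x$ or $y$ and using cyclicity gives linear equations on the necklace values. Up to the $x\leftrightarrow y$ swap, the necklace classes are $x^5$, $x^4y$, $x^3y^2$ and $x^2yxy$. We expect (i)--(iii) to force all of them to vanish, with the $C_\infty$ (shuffle-vanishing) condition on $m_4$ available as an extra relation if needed.

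\textbf{Main obstacle.} The hard step is the sign bookkeeping in the $b^r=2$ case, especially for $r$ even. There the symmetric pairing and the sign $(-1)^k(-1)^\epsilon$ with $\epsilon$ even interact differently with the reflection of Lemma~\ref{lem:evensymmetry4}, and the relations may not close without the Harrison shuffle relations. If the linear algebra leaves a residual one-dimensional space, we will compute $\widehat m_4$ directly from \eqref{eq:hat-mk}. In the top degree $n$, $d^-$-exact pieces pair to zero against harmonic classes by \eqref{eq:orthogonality}, so
\[
\langle m_4(x,x,y,y),x\rangle=\langle \widehat m_4(x,x,y,y),x\rangle .
\]
Using Lemma~\ref{lem:adjoint}(3), this can be rewritten via self-adjointness of $d^-$ as a combination of pairings $\langle d^-(ab),\,c\,d^-(ef)\rangle$. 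We then show this combination cancels using the graded commutativity of $H^r$ and $\widehat m_3$-symmetries such as \eqref{eq:xyx-vanish}.
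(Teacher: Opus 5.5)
Your strategy is the paper's. You reduce to $n=5r-2$, kill $m_k$ for $k\geq5$, and confine $m_4$ to $(H^r)^{\otimes 4}$ by the degree count $\sum d_i=5r$. You then aim to kill the four binary necklaces $x^5$, $x^4y$, $x^3y^2$, $x^2yxy$ using cyclicity, the arity-$4$ reflection identities, and polarization of $m_4(\alpha,\alpha,\alpha,\alpha)=0$. Getting the first two steps from the cyclic degree count of Lemma~\ref{lem:degree}, rather than quoting Fiorenza--L\^e's Theorem~2.12 and Lemma~\ref{lem:rrr}, is a harmless variation.

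\emph{The gap:} the case $b^r=2$, the only part of the theorem not already covered by the cited results, is never argued. You stop at ``we expect (i)--(iii) to force all of them to vanish'' and offer fallbacks.

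The obstacle you anticipate does not exist. For five inputs in $H^r$ the sign in \eqref{eq:cyclicsign} is $(-1)^4(-1)^{r\cdot 4r}=+1$ for either parity of $r$. Hence $\widehat F(x_1,\dots,x_5)=\langle m_4(x_1,\dots,x_4),x_5\rangle$ is strictly rotation invariant. The identities $m_4(x,x,x,y)=-m_4(y,x,x,x)$ and $m_4(x,y,y,z)=-m_4(z,y,y,x)$ carry the same sign $-1$ in both parities: Lemma~\ref{lem:oddsymmetry} with $(-1)^{k-1}=-1$ for $r$ odd, and Lemma~\ref{lem:evensymmetry4} for $r$ even. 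They pass from $\widehat m_4$ to $m_4=\pi_{\Hh^*}\widehat m_4$ by linearity.

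With these tools each necklace dies in one line.
\begin{itemize}
\item For $x^5$, use \eqref{eq:vanishmkr}.
\item For $x^4y$, take the coefficient of $t$ in $m_4(x+ty,\dots,x+ty)=0$ and pair it with $x$. This gives a sum of four values of $\widehat F$, each a rotation of $(x,x,x,x,y)$, so $4\widehat F(x,x,x,x,y)=0$.
\item For $x^3y^2$, $\widehat F(x,x,x,y,y)=\langle m_4(x,x,x,y),y\rangle=-\langle m_4(y,x,x,x),y\rangle=-\widehat F(y,x,x,x,y)=-\widehat F(x,x,x,y,y)$.
\item For $x^2yxy$, the second identity with $(x,y,z)\mapsto(y,x,y)$ gives $m_4(y,x,x,y)=-m_4(y,x,x,y)=0$. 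Hence $\widehat F(y,x,x,y,x)=0$, and $(x,x,y,x,y)$ is a rotation of $(y,x,x,y,x)$.
\end{itemize}
Exchanging $x$ and $y$ handles the remaining necklaces.

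The reflection identities are indispensable. Polarization and cyclicity alone yield only $\widehat F(x,x,x,y,y)+\widehat F(x,x,y,x,y)=0$ and its $x\leftrightarrow y$ mirror. Neither the Harrison shuffle relations nor a direct evaluation of $\widehat m_4$ via self-adjointness of $d^-$ is needed.
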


\begin{remark}\label{rem:m4}
It is worth  mentioning  a related 
formality   result by Cavalcanti \cite[Theorem 4]{Cavalcanti2006}  who showed  the formality of closed smooth manifolds of dimension $4r$   with $b_r\le 2$     assuming the existence of a   Lefschetz   isomorphism as in Theorem \ref{thm:CCN}.
\end{remark}

\begin{proposition}\label{prop:m4}
	Let $(\Aa^*,d)$ be an $(r-1)$-connected Poincar\'e DGCA of degree
	$n\leq 5r-2$ over $\Q$, where $r\geq2$, and assume that $\Aa^*$
	admits a Hodge homotopy. If
	\[
	b^r=\dim H^r(\Aa)\leq2,
	\]
	then the arity-$4$ operation $m_4$ of the transferred minimal unital
	cyclic $C_\infty$-algebra on $H^*(\Aa)$ vanishes.
\end{proposition}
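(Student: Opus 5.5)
Let me first reduce $m_4$ to a single critical degree. The operation $m_4$ has internal degree $-2$. By strict unitality (Remark \ref{rem:vanishmkr}), $m_4(a_1,\dots,a_4)=0$ whenever some $a_i=1$. So every nonzero value has all inputs of degree at least $r$, by $(r-1)$-connectedness. Pairing with a fifth class $a_5$ and using cyclicity (Theorem \ref{thm:cyclic}), we may also assume $a_5\neq 1$, since $\langle m_4(a_1,\dots,a_4),1\rangle$ is, up to sign, $\langle m_4(a_2,a_3,a_4,1),a_1\rangle=0$. Hence $\sum_{i=1}^5|a_i|=n+2\le 5r$. This forces $n=5r-2$ and $|a_i|=r$ for all five entries; if $n<5r-2$ there is nothing to prove. (This is just Lemma \ref{lem:degree} made sharp at the borderline.) So it remains to show that the $5$-linear form
\[
T(a_1,\dots,a_5)=\langle m_4(a_1,\dots,a_4),a_5\rangle
\]
on $H^r$ vanishes, where $\dim H^r\le 2$.

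Next I would collect the symmetries of $T$. By cyclicity, $T$ is invariant up to the sign $(-1)^4(-1)^{r\cdot 4r}=+1$ under cyclic rotation of its five arguments, so $T$ is cyclically invariant. By \eqref{eq:vanishmkr}, $T(x,x,x,x,y)=0$ for all $x,y\in H^r$. If $r$ is odd, the reflection identity \eqref{eq:odd-reflection} with $k=4$ gives $m_4(a_1,\dots,a_4)=-m_4(a_4,\dots,a_1)$. If $r$ is even, Lemma \ref{lem:evensymmetry4} gives the two reflection identities \eqref{eq:symmetry1e4} and \eqref{eq:symmetry2e4}. Either way, together with cyclic invariance, $T$ picks up a sign under the dihedral reflections of the pentagon, at least on the monomials that occur.

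Now take a basis $e_1,e_2$ of $H^r$ (the case $b^r\le1$ is immediate from \eqref{eq:vanishmkr}). The only possible nonzero values are on words in $e_1,e_2$ of length $5$, modulo rotation. Words with a single $e_2$ are rotations of $e_1e_1e_1e_1e_2$, so they vanish by \eqref{eq:vanishmkr} and cyclicity; the same holds for a single $e_1$. It remains to treat the necklaces with two letters of one kind and three of the other, namely $e_1e_1e_1e_2e_2$ and $e_1e_1e_2e_1e_2$ together with their swaps. For these I would show vanishing by polarizing $m_4(x,x,x,x)=0$ with $x=e_1+te_2$ and taking the $t^2$ and $t^3$ coefficients. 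By cyclic invariance, each coefficient becomes a combination of the values of $T$ on these necklaces. The reflection identities then relate the two necklaces, since reflecting $e_1e_1e_1e_2e_2$ gives a rotation of itself, which yields sign constraints that kill it when the sign is $-1$. Where polarization only gives one relation for two unknowns, I would use the $5$-variable version $T(x,x,x,x,y)=0$ with $y$ independent. The $\Q$ hypothesis is harmless here; it only guarantees the existence of a Hodge homotopy model.

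The main obstacle is this last combinatorial step. The reflection identities available for $r$ even, namely Lemma \ref{lem:evensymmetry4}, only cover the patterns $(x,x,x,y)$ and $(x,y,y,z)$, not full reversal. So I expect to need the explicit recursion \eqref{eq:hat-mk}, specialized to $\dim H^r=2$, to extract one more relation. For example, I would compute $\langle \widehat m_4(e_1,e_1,e_2,e_2),e_1\rangle$ directly via self-adjointness of $d^-$ (Lemma \ref{lem:adjoint}) and compare it with the value obtained after cyclic rotation. I would try that computation first.
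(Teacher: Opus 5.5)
Your strategy is the paper's. You confine $m_4$ to $(H^r)^{\otimes 4}$ at $n=5r-2$, pass to the cyclically invariant $5$-form, and kill the binary necklaces of length five using \eqref{eq:vanishmkr} and the reflection identities. Two of your steps are slightly cleaner than the paper's:
\begin{itemize}
\item Your degree reduction via unitality, cyclicity and nondegeneracy is self-contained; the paper invokes \cite[Theorem~2.12]{FiorenzaLe2025} and Lemma~\ref{lem:rrr} instead.
\item For the $4+1$ necklaces you observe directly that $T(e_1,e_1,e_1,e_1,e_2)=\langle m_4(e_1,e_1,e_1,e_1),e_2\rangle=0$, where the paper polarizes.
\end{itemize}
Your argument for $r$ odd is also correct: full reversal, plus the fact that every binary necklace of length five is carried to a rotation of itself by some reflection.

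What is missing is the conclusion for $r$ even, and the obstacle you name there is not real. The patterns $(x,x,x,y)$ and $(x,y,y,z)$ of Lemma~\ref{lem:evensymmetry4} are exactly the reflections the two remaining necklaces need.
\begin{itemize}
\item Pair \eqref{eq:symmetry1e4}, with $x=e_1$ and $y=e_2$, against $e_2$. This gives $T(e_1,e_1,e_1,e_2,e_2)=-T(e_2,e_1,e_1,e_1,e_2)$. The second word is a rotation of the first, so this value is zero.
\item Take $x=z=e_2$ and $y=e_1$ in \eqref{eq:symmetry2e4}. This gives $m_4(e_2,e_1,e_1,e_2)=0$, hence $T(e_2,e_1,e_1,e_2,e_1)=0$. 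The word $e_2e_1e_1e_2e_1$ is a rotation of $e_1e_1e_2e_1e_2$.
\end{itemize}
Alternatively, the $t^2$-coefficient of your polarization, paired with $e_1$, reads $3T(e_1,e_1,e_1,e_2,e_2)+3T(e_1,e_1,e_2,e_1,e_2)=0$. So the first identity alone already suffices.

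Interchanging $e_1$ and $e_2$ handles the necklaces with three copies of $e_2$, since the identities hold for all $x,y,z\in H^r$. This is exactly how the paper finishes, and no computation with \eqref{eq:hat-mk} is required.

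The ``$5$-variable version'' $T(x,x,x,x,y)=0$ that you propose as an extra relation carries no information beyond polarization. It is just $m_4(x,x,x,x)=0$ paired with $y$.

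You can also drop the worry about full reversal. The computation in the proof of Lemma~\ref{lem:evensymmetry4}, combined with \eqref{eq:symmetry2e}, gives $\widehat m_4(a_1,\dots,a_4)=-\widehat m_4(a_4,\dots,a_1)$ on $\Hh^r$ for $r$ even as well. So your dihedral argument from the odd case would apply unchanged.
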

For the proof of Proposition \ref{prop:m4} we shall  use the following.
	
\begin{lemma}\label{lem:rrr}\cite[Lemma 2.16]{FiorenzaLe2025}
	Let $r\geq2$, and let $\Aa^\ast$ be an $(r-1)$-connected
Poincar\'e DGCA of degree $n$ admitting a Hodge homotopy. Let
$m=(m_3,m_4,\ldots)$ be the transferred minimal unital
$C_\infty$-algebra enhancement of $H^*\coloneqq H^*(\Aa^*)$. If
$n\leq\ell(r-1)+3$, then
$m_{\ell-1}(\alpha_1,\ldots,\alpha_{\ell-1})$ is zero unless
\[
\deg(\alpha_1,\ldots,\alpha_{\ell-1})=(r,\ldots,r).
\]
Moreover, for every $\alpha\in H^r$,
\[
m_{\ell-1}(\alpha,\ldots,\alpha)=0.
\]
\end{lemma}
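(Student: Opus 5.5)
The plan is a pure degree count, combining strict unitality and cyclicity (Theorem \ref{thm:cyclic}, Remark \ref{rem:vanishmkr}) with Poincar\'e duality on $H^*$, in the same spirit as Lemma \ref{lem:degree}. Set $k=\ell-1\geq3$ and recall that $m_k$ has internal degree $2-k=3-\ell$. Suppose $m_k(\alpha_1,\dots,\alpha_k)\neq0$ for homogeneous $\alpha_i\in H^{d_i}$.

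First I will show that every input has degree $\geq r$. By strict unitality of the transferred structure (Remark \ref{rem:vanishmkr}), $m_k$ vanishes as soon as one input equals $1$. Since $H^0=\F\cdot1$ and $H^i=0$ for $0<i<r$, every $d_i$ with $1\le i\le k$ must satisfy $d_i\geq r$.

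Next I will produce a dual element of degree $\geq r$. Nondegeneracy of the Poincar\'e pairing on $H^*$ gives a homogeneous $\alpha_{k+1}\in H^{d_{k+1}}$ with $\langle m_k(\alpha_1,\dots,\alpha_k),\alpha_{k+1}\rangle\neq0$. The cyclic identity \eqref{eq:cyclicsign} rewrites this as $\pm\langle m_k(\alpha_2,\dots,\alpha_{k+1}),\alpha_1\rangle$. If $\alpha_{k+1}=1$, this vanishes by unitality. Hence $d_{k+1}\geq r$ as well.

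Then I will compare degrees. Compatibility with the pairing gives
\[
\sum_{i=1}^{k+1}d_i=n+k-2=n+\ell-3\leq \ell(r-1)+3+\ell-3=\ell r .
\]
On the other hand, there are $k+1=\ell$ summands, each at least $r$, so the sum is at least $\ell r$. Equality is forced throughout, so every $d_i=r$. In particular $\deg(\alpha_1,\dots,\alpha_{\ell-1})=(r,\dots,r)$, which is the first assertion. As a by-product, $m_{\ell-1}$ vanishes identically unless $n=\ell(r-1)+3$ exactly.

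The second assertion, $m_{\ell-1}(\alpha,\dots,\alpha)=0$ for $\alpha\in H^r$, is the Fiorenza--L\^e vanishing \eqref{eq:vanishmkr} with $k=\ell-1\geq3$, transported to $H^*(\Aa)$ via $\Hh^*\cong H^*(\Aa)$.

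The only point needing care is the use of cyclicity to exclude $\alpha_{k+1}=1$. This requires the transferred structure to be cyclic with respect to the nondegenerate pairing on $\Hh^*\cong H^*$, which Theorem \ref{thm:cyclic} supplies, and to be strictly unital in arity $k\geq3$, which Remark \ref{rem:vanishmkr} supplies. Beyond that, the argument is a direct degree count.
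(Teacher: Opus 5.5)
Your argument is correct. The paper does not reprove this lemma; it imports it from Fiorenza--L\^e. Your degree count is exactly the argument of Lemma \ref{lem:degree}: strict unitality, together with cyclicity from Theorem \ref{thm:cyclic}, forces all $\ell$ degrees $d_1,\dots,d_\ell$ to be $\geq r$. Here the bound $\sum d_i=n+\ell-3\leq\ell r$ turns that inequality into equality, and the diagonal vanishing is \eqref{eq:vanishmkr} with $k=\ell-1$. You also correctly take $\ell\geq4$ as implicit, since for $\ell=3$ the first claim fails because $m_2(1,\alpha)=\alpha$.
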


\begin{proof}[Proof of Proposition \ref{prop:m4}]
	By \cite[Theorem~2.12]{FiorenzaLe2025}, the conclusion already holds
	when
	\[
	n\leq5r-3.
	\]
	It therefore remains only to consider the borderline case
	\[
	n=5r-2.
	\]
	
	Applying Lemma~\ref{lem:rrr} with $\ell=5$, we obtain
	\begin{equation}\label{eq:m4-support}
		m_4(\alpha_1,\alpha_2,\alpha_3,\alpha_4)=0
	\end{equation}
	unless
	\[
	\alpha_1,\alpha_2,\alpha_3,\alpha_4\in H^r.
	\]
	The same lemma also gives
	\begin{equation}\label{eq:m4-diagonal}
		m_4(x,x,x,x)=0
		\qquad
		\text{for every }x\in H^r.
	\end{equation}
	
	We shall also use the following two identities:
	\begin{align}
		m_4(x,x,x,y)
		&=
		-m_4(y,x,x,x),
		\label{eq:sym1_proof}\\
		m_4(x,y,y,z)
		&=
		-m_4(z,y,y,x),
		\label{eq:sym2_proof}
	\end{align}
	for all $x,y,z\in H^r$. If $r$ is even, they follow from
	Lemma~\ref{lem:evensymmetry4}. If $r$ is odd, they follow from
	Lemma~\ref{lem:oddsymmetry} with $k=4$, since
	$(-1)^{k-1}=-1$. The corresponding identities were proved there for
	the recursively defined operations $\widehat m_4$ and therefore also
	hold for
	\[
	m_4=\pi_{\Hh^*}\widehat m_4.
	\]
	Notice that these identities do not require any restriction on
	$b^r$.
	
	For $x_1,\dots,x_5\in H^r$, define
	\begin{equation}\label{eq:Fhat4}
		\widehat F(x_1,x_2,x_3,x_4,x_5)
		\coloneqq
		\left\langle
		m_4(x_1,x_2,x_3,x_4),x_5
		\right\rangle,
	\end{equation}
	where $\langle-,-\rangle$ is the Poincar\'e pairing on $H^*(\Aa)$.
	Since
	\[
	|m_4(x_1,x_2,x_3,x_4)|=4r-2
	\]
	and
	\[
	(4r-2)+r=5r-2=n,
	\]
	the pairing
	\[
	H^{4r-2}\otimes H^r\longrightarrow\Q
	\]
	is nondegenerate.
	
	Cyclicity of $m_4$ gives
	\begin{equation}\label{eq:Fhat-cyclic}
		\widehat F(x_2,x_3,x_4,x_5,x_1)
		=
		\widehat F(x_1,x_2,x_3,x_4,x_5).
	\end{equation}
	Indeed, the sign in \eqref{eq:cyclicsign} is
	\[
	(-1)^4(-1)^{r(4r)}=1.
	\]
	
	If $b^r=0$, then \eqref{eq:m4-support} immediately gives $m_4=0$.
	If $b^r=1$, choose a basis element $a$ of $H^r$. Every value of
	$m_4$ on $(H^r)^{\otimes4}$ is a scalar multiple of
	$m_4(a,a,a,a)$, which vanishes by \eqref{eq:m4-diagonal}.
	Hence $m_4=0$ also in this case.
	
	Assume from now on that
	\[
	b^r=2,
	\]
	and choose a basis $(a,b)$ of $H^r$. By multilinearity and the
	nondegeneracy of the Poincar\'e pairing, it is enough to prove that
	\[
	\widehat F(u_1,u_2,u_3,u_4,u_5)=0
	\qquad
	\text{for all }u_i\in\{a,b\}.
	\]
	This is the precise point at which the hypothesis $b^r\leq2$ enters
	the proof. Up to interchanging $a$ and $b$ and applying a cyclic
	permutation, every word of length five in the alphabet $\{a,b\}$ is
	of one of the following four types:
	\[
	aaaaa,\qquad
	aaaab,\qquad
	aaabb,\qquad
	aabab.
	\]
	We treat these four types separately.
	
	\medskip
	
	\noindent
	\emph{Type \(aaaaa\).}
	By \eqref{eq:m4-diagonal},
	\[
	\widehat F(a,a,a,a,a)
	=
	\langle m_4(a,a,a,a),a\rangle
	=
	0.
	\]
	
	\medskip
	
	\noindent
	\emph{Type \(aaaab\).}
	Apply \eqref{eq:m4-diagonal} to $a+tb$ and take the coefficient of
	$t$. We obtain
	\[
	\begin{aligned}
		0={}&
		m_4(b,a,a,a)
		+m_4(a,b,a,a)\\
		&+
		m_4(a,a,b,a)
		+m_4(a,a,a,b).
	\end{aligned}
	\]
	Pairing this identity with $a$ gives
	\[
	\begin{aligned}
		0={}&
		\widehat F(b,a,a,a,a)
		+\widehat F(a,b,a,a,a)\\
		&+
		\widehat F(a,a,b,a,a)
		+\widehat F(a,a,a,b,a).
	\end{aligned}
	\]
	By cyclicity \eqref{eq:Fhat-cyclic}, all four summands are equal to
	\[
	\widehat F(a,a,a,a,b).
	\]
	Since the ground field has characteristic zero,
	\[
	4\widehat F(a,a,a,a,b)=0
	\]
	implies
	\[
	\widehat F(a,a,a,a,b)=0.
	\]
	Interchanging $a$ and $b$ and using cyclicity gives the vanishing of
	every component of type \(4+1\).
	
	\medskip
	
	\noindent
	\emph{Type \(aaabb\).}
	Using \eqref{eq:sym1_proof}, we obtain
	\[
	\begin{aligned}
		\widehat F(a,a,a,b,b)
		&=
		\langle m_4(a,a,a,b),b\rangle\\
		&=
		-\langle m_4(b,a,a,a),b\rangle\\
		&=
		-\widehat F(b,a,a,a,b).
	\end{aligned}
	\]
	By cyclicity,
	\[
	\widehat F(b,a,a,a,b)
	=
	\widehat F(a,a,a,b,b).
	\]
	Consequently,
	\[
	\widehat F(a,a,a,b,b)
	=
	-\widehat F(a,a,a,b,b),
	\]
	and hence
	\[
	\widehat F(a,a,a,b,b)=0.
	\]
	This proves the vanishing of every component in which the two
	occurrences of one basis element are adjacent in the cyclic word.
	
	\medskip
	
	\noindent
	\emph{Type \(aabab\).}
	Taking $x=z=b$ and $y=a$ in \eqref{eq:sym2_proof}, we obtain
	\[
	m_4(b,a,a,b)
	=
	-m_4(b,a,a,b).
	\]
	Thus
	\[
	m_4(b,a,a,b)=0.
	\]
	Pairing with $a$ gives
	\[
	\widehat F(b,a,a,b,a)=0.
	\]
	By cyclicity,
	\[
	\widehat F(a,a,b,a,b)
	=
	\widehat F(b,a,a,b,a)
	=
	0.
	\]
	This proves the vanishing of the remaining cyclic type, in which the
	two occurrences of one basis element are separated.
	
	We have now shown that every component of $\widehat F$ with respect
	to the basis $(a,b)$ vanishes. Hence
	\[
	\widehat F=0
	\qquad\text{on }(H^r)^{\otimes5}.
	\]
	By the nondegeneracy of the Poincar\'e pairing
	\[
	H^{4r-2}\otimes H^r\longrightarrow\Q,
	\]
	it follows that
	\[
	m_4(x_1,x_2,x_3,x_4)=0
	\qquad
	\text{for all }x_1,x_2,x_3,x_4\in H^r.
	\]
	Finally, \eqref{eq:m4-support} implies that $m_4$ vanishes on all of
	$(H^*)^{\otimes4}$.
\end{proof}

\begin{remark}
	The identities \eqref{eq:sym1_proof} and \eqref{eq:sym2_proof} hold
	without the hypothesis $b^r\leq2$ and may therefore be useful in
	higher-dimensional situations. The dimension restriction is used
	only in the classification of cyclic basis components into the four
	binary types
	\[
	aaaaa,\qquad aaaab,\qquad aaabb,\qquad aabab.
	\]
	When $b^r\geq3$, additional components involving three or more
	distinct basis elements occur, and the preceding argument does not
	force them to vanish.
\end{remark}

\begin{proof}[Proof of Theorem \ref{thm:m4}]  
	  By \cite[Theorem~2.12]{FiorenzaLe2025} with $\ell=6$, we have
	\[
	m_k=0\qquad(k\geq5),
	\]
	because
	\[
	5r-2\leq6(r-1)+2=6r-4.
	\]
Taking into account  Proposition \ref{prop:m4} we conclude   Theorem \ref{thm:m4}.
	\end{proof}

\section{Conclusion and outlook}\label{sec:final}

We conclude with several consequences of our results and directions for
future work.

\begin{enumerate}
	\item
	For a fixed finite-dimensional graded commutative algebra $H^*$, more
	generally under the finite-dimensional truncation hypothesis of
	Theorem~\ref{thm:iso_from_equiv}, our obstruction theory gives a
	recursive classification of minimal $C_\infty$-algebra enhancements of
	$H^*$ up to isotopy. At the $k$-th stage, the obstruction to extending
	an isotopy modulo $(k-1)$ is encoded by the obstruction set
	\[
	\mathcal K_k(m,m'),
	\]
	and these obstruction sets satisfy the generalized additivity property
	of Theorem~\ref{thm:additivek} and
	Proposition~\ref{prop:obstruction-additivity}. Thus the rational
	homotopy types with fixed cohomology algebra $H^*$ can be distinguished
	recursively by a hierarchy of Harrison obstruction classes.
	
	By \cite[Theorem~4.4]{FiorenzaLe2025}, the primary Harrison class
	\[
	[m_3]\in\operatorname{Harr}^{3,-1}(H^*,H^*)
	\]
	and the Bianchi--Massey tensor of Crowley--Nordstr\"om determine one
	another. Nagy--Nordstr\"om introduced the pentagonal Massey tensor
	$\mathcal P$, a quintic rational homotopy invariant related to
	fourfold Massey products \cite{NN2021}. It would be interesting to
	determine how $\mathcal P$, including its dependence on auxiliary
	choices when the Bianchi--Massey tensor does not vanish, is encoded by
	our relative higher obstruction sets, in particular by the arity-$4$
	obstruction theory.
	
	\item
	Our recursive description complements the global deformation-theoretic
	description of Schlessinger--Stasheff \cite{SS2012}. Their approach
	describes rational homotopy types with fixed cohomology as a moduli
	space obtained from an algebraic variety of perturbations, whereas our
	approach resolves the gauge-equivalence problem successively along the
	filtration of the controlling Gerstenhaber DGLA. This produces
	explicit obstruction classes and affine obstruction sets at each
	finite arity.
	
	\item
	There is a potentially fruitful relation with the
	$\mathrm{IBL}_\infty$ models of equivariant string topology developed
	by Cieliebak,  H\'ajek, and Volkov \cite{CHV2022}. Their work compares
	algebraic and analytic $\mathrm{IBL}_\infty$-structures associated
	with cyclic models of closed oriented manifolds. It would be
	interesting to investigate whether the finite-stage information
	provided by isotopy modulo $k$ controls corresponding finite portions
	of the induced $\mathrm{IBL}_\infty$-structure, and how changes of
	cyclic $C_\infty$-models are reflected in the analytic construction.
	
	\item
	The proof of Theorem~\ref{thm:m4} shows that, in the borderline
	dimension $n=5r-2$ and under the assumption $b^r\leq2$, cyclicity and
	the special symmetries of $m_4$ force the vanishing of the entire
	arity-$4$ operation. It is natural to ask for the optimal dimension
	range in which such a conclusion remains valid. An extension toward
	the range $n\leq6r-4$ would have to account for additional degree
	patterns for $m_4$ and for the higher rational homotopy information
	detected by the pentagonal Massey tensor.
	
	\item
	The deformation DGLA controlling minimal $C_\infty$-enhancements is
	closely related to other Lie-theoretic models in rational homotopy
	theory. Berglund \cite{Berglund2011} and
	Buijs--F\'elix--Murillo \cite{BFM2013} constructed
	$L_\infty$-models for components of mapping spaces. A natural
	direction for future work is to compare the Harrison DGLA controlling
	$C_\infty$-enhancements of $H^*(X)$ with $L_\infty$-models for the
	components of $\operatorname{Map}(X,X)$ consisting of self-homotopy
	equivalences, and to determine how our obstruction classes appear in
	the rational homotopy theory of these components.
	
	\item
	Finally, the construction of isotopy modulo $k$, the obstruction sets
	$\mathcal K_k$, and their additivity are not specific to Harrison
	cochains. They apply to Maurer--Cartan elements in any complete
	filtered pronilpotent DGLA whose differential and bracket are
	compatible with the filtration. The passage from compatible
	equivalences at every finite stage to a genuine gauge equivalence
	requires the corresponding inverse-limit, or Mittag--Leffler,
	hypothesis used in Theorem~\ref{thm:iso_from_equiv}.
\end{enumerate}

\end{document}